\documentclass[11pt,a4paper]{article}
\usepackage[left=2cm, top=3cm,bottom=3cm,right=2cm]{geometry}
\usepackage{mathtools,amssymb,amsthm,mathrsfs,calc,graphicx,xcolor,dsfont,tikz,pgfplots,bm,url,tabularx}
\usepackage[british]{babel}
\usepackage{amsfonts}              
\usepackage[T1,T2A]{fontenc}
\usepackage{enumitem}
\usepackage{tikz-cd}
		\usetikzlibrary{calc} 
		\usetikzlibrary{intersections}
\usepackage[labelfont=bf]{caption}
\usepackage[font=small]{subcaption}

\usepackage[
  bookmarks=true,
  bookmarksnumbered=true,
  bookmarksopen=true,
  unicode=true,
  pdftitle={Random hyperbolic polyhedra in horoballs},
  pdfauthor={Florian Besau, Anna Gusakova, Christoph Thäle},
  pdfsubject={MSC 2020: Primary 60D05; Secondary 52A22, 52A55, 60G55},
  pdfcreator={PDFLaTeX},
  pdfkeywords={
    geodesic convex hull,
    horoball,
    hyperbolic stochastic geometry,
    Poisson--Delaunay tessellation,
    Poisson--Laguerre tessellation,
    random hyperbolic polyhedron
  },
  colorlinks=true,
  linkcolor=black,
  citecolor=black,
  filecolor=black,
  urlcolor=black
]{hyperref}

\usepackage{cleveref}

\theoremstyle{plain}
\newtheorem{theorem}{Theorem}

\newtheorem{corollary}[theorem]{Corollary}
\newtheorem{proposition}[theorem]{Proposition}

\theoremstyle{definition}

\theoremstyle{remark}
\newtheorem{remark}[theorem]{Remark}

\newcommand{\dint}{\textup{d}}

\newcommand{\skel}{\mathop{\mathrm{skel}}\nolimits}
\newcommand{\vol}{\mathop{\mathrm{vol}}\nolimits}

\newcommand{\bx}{\boldsymbol{x}}
\newcommand{\by}{\boldsymbol{y}}
\newcommand{\bz}{\boldsymbol{z}}
\newcommand{\bu}{\boldsymbol{u}}
\newcommand{\bv}{\boldsymbol{v}}
\newcommand{\bw}{\boldsymbol{w}}

\def\BB{\mathbb{B}}
\def\CC{\mathbb{C}}

\def\EE{\mathbb{E}}

\def\HH{\mathbb{H}}

\def\NN{\mathbb{N}}

\def\PP{\mathbb{P}}

\def\RR{\mathbb{R}}
\def\SS{\mathbb{S}}

\def\bp{\mathbf{p}}

\def\bX{\mathbf{X}}

\def\cA{\mathcal{A}}
\def\cB{\mathcal{B}}
\def\cC{\mathcal{C}}
\def\cD{\mathcal{D}}

\def\cH{\mathcal{H}}

\def\cL{\mathcal{L}}

\def\cS{\mathcal{S}}
\def\cT{\mathcal{T}}

\newcommand{\conv}{\mathop{\mathrm{conv}}\nolimits}

\newcommand{\inter}{\operatorname{int}}

\newcommand{\russianL}{\text{\fontencoding{T2A}\selectfont\CYRL}}

\makeatletter
\let\@fnsymbol\@alph
\makeatother

\pgfplotsset{compat=1.18}

\begin{document}

\title{\bfseries Random hyperbolic polyhedra in horoballs}

\author{Florian Besau\footnotemark[1]\;\; --\; Anna Gusakova\footnotemark[2]\;\; --\; Christoph Th\"ale\footnotemark[3]}

\date{}
\renewcommand{\thefootnote}{\fnsymbol{footnote}}
\footnotetext[1]{TU Wien, Austria. Email: florian.besau@tuwien.ac.at}
\footnotetext[2]{University of Münster, Germany. Email: gusakova@uni-muenster.de}
\footnotetext[3]{Ruhr University Bochum, Germany. Email: christoph.thaele@rub.de}

\maketitle

\begin{abstract}
	\noindent
	We study the geodesic convex hull of a stationary Poisson point process restricted to a horoball in $d$-dimensional hyperbolic space. The resulting random set is an unbounded hyperbolic polyhedron with a distinguished ideal direction. Projecting its boundary facets to the bounding horosphere yields a stationary Euclidean tessellation of $\mathbb{R}^{d-1}$, which we identify as a dual Poisson--Laguerre tessellation with an explicit height density. We derive an exact formula for its cell intensity and, in the critical regime where the intensity of the Poisson point process is matched with the height of the horoball, we prove local convergence of the projected tessellation to the classical Poisson--Delaunay tessellation in $\mathbb{R}^{d-1}$. As consequences, the typical cell converges in distribution and the intensities of all $k$-dimensional faces converge to their Poisson--Delaunay counterparts. We also study a localised volume functional of the hyperbolic convex hull, determine its limiting expectation in the same regime, and use an Efron-type identity to obtain a second-order asymptotic expansion for the vertex intensity. In addition, we derive an exact formula for the expected localised surface area and determine its critical asymptotics. In dimensions $d\ge3$ the expected unnormalised local surface area converges to a finite limit, whereas in dimension $d=2$ it exhibits logarithmic growth.

	\medskip
	\noindent {\bf Keywords:} Geodesic convex hull, horoball, hyperbolic stochastic geometry, Poisson--Delaunay tessellation, Poisson--Laguerre tessellation, random hyperbolic polyhedron\\
	\textbf{MSC:} Primary 60D05; Secondary 52A22, 52A55, 60G55.
\end{abstract}

{
\footnotesize
\tableofcontents
}

\section{Introduction and overview of main results}

\subsection{General introduction and motivation}

Random polytopes are one of the classical objects of convex, integral and stochastic
geometry. Starting with the seminal work of R\'enyi and Sulanke
\cite{RenyiSulanke1963,RenyiSulanke1964}, an extensive literature has
developed on the asymptotics of convex hulls of random points in Euclidean convex bodies,
see, for example, the surveys \cite{Barany2008, Hug2013, Reitzner2010} and Chapter 8 in the monograph
\cite{SW}. In its most classical form, one fixes a convex body
$K\subset\RR^d$, chooses independent and uniformly distributed random points $\bX_1,\ldots,\bX_n$ in $K$, and studies the random convex hull $K_n$ of $\bX_1,\ldots,\bX_n$.
A closely related Poissonised version is obtained by taking the convex hull
of the points of a homogeneous Poisson point process inside $K$, see \cite{BaranyReitzner2010}.
Typical questions
concern the asymptotic behaviour of geometric and combinatorial
characteristics of this random polytope, such as its volume, intrinsic volumes, number of
faces, surface area, or the geometry of its boundary, as the number of points
or the intensity of the underlying point process tends to infinity.

In recent years, similar questions have also been investigated in
non-Euclidean geometries. For random polytopes generated by uniformly
distributed points in a hyperbolic convex body, the asymptotic behaviour of
the expected missed volume was determined in \cite{BLW:2018}. Central limit
theorems for the volume were subsequently obtained in
\cite{BesauThaele2020}, while asymptotic upper bounds for the variance of
the volume and the vertex number were recently derived in
\cite{FodorGruenfelder2025}. Related results in more general projective
Riemannian and Finsler geometries were developed in
\cite{BLW:2018,BesauRosenThaele2021}.
In these works, the underlying deterministic
sets are compact convex bodies. Thus the corresponding random polytopes are
bounded objects. Moreover, in the high-intensity regime the relevant
geometry is concentrated close to the boundary of the underlying body. After
a suitable local rescaling, this boundary geometry becomes asymptotically
flat, and one therefore observes phenomena which are closely related to
the classical Euclidean theory of random polytopes.

The present paper takes a new and different point of view. We consider random convex
hulls in an unbounded convex subset of hyperbolic space. More precisely, we work with horoballs in the $d$-dimensional hyperbolic space $\HH^d$, $d\geq 2$, of constant curvature
$-1$. In the upper halfspace model of $\HH^d$, in which $\HH^d$ is identified with the product space $\RR^{d-1}\times(0,\infty)$, the closed horoball based at the ideal
point $\infty$ is of the form
\[
	B_\lambda^\infty=\{(\bv,y)\in\HH^d:y\ge \lambda\},
	\qquad \lambda>0.
\]
It is unbounded, geodesically convex, and has exactly one distinguished ideal direction. From the perspective of hyperbolic geometry, a horoball is thus a natural container for a random convex hull which is unbounded but still has a controlled asymptotic direction at infinity. Since $B_\lambda^\infty$ has infinite hyperbolic volume, one cannot sample a finite number of uniformly distributed random points in it. To overcome this difficulty, we fix $\gamma>0$, let $\eta_\gamma$ be a stationary Poisson point process in $\HH^d$ of intensity $\gamma$, and restrict it to the horoball $B_\lambda^\infty$ by putting
\[
	\eta_{\gamma,\lambda}:=\eta_\gamma\cap B_\lambda^\infty.
\]
The main probabilistic object of this paper is the random hyperbolic polyhedron defined as the geodesic convex hull
\[
	K_{\gamma,\lambda}:=\conv(\eta_{\gamma,\lambda}) \subset B_\lambda^\infty
\]
of $\eta_{\gamma,\lambda}$. The geometric motivation for this model is twofold. First, horoballs are the canonical convex neighbourhoods of ideal boundary points and, after taking
quotients by parabolic subgroups, they provide standard local models for the non-compact ends of finite-volume hyperbolic manifolds. Second, horoballs arise as boundary limits of large geodesic balls. More precisely, if the centre of a geodesic ball tends to an ideal boundary point while its radius diverges in such a way that the signed distance of its boundary from a fixed reference point converges to a finite limit, then the balls
converge locally to a horoball. The present construction may therefore be viewed as a limiting model for random polytopes in hyperbolic balls of growing radius, observed in a moving frame near a boundary point escaping to infinity. In this sense, $K_{\gamma,\lambda}$ is a random-polytope model ``at infinity'', see Figure~\ref{fig:simulationpoincare} for simulations in the Poincar\'e ball model.

\begin{figure}
	\centering
	\begin{tikzpicture}
		\begin{scope}
			\clip (-3.7,-3.8) rectangle (3.7, 3.8);
			\node at (0,0) {
				\includegraphics[width=0.45\linewidth]{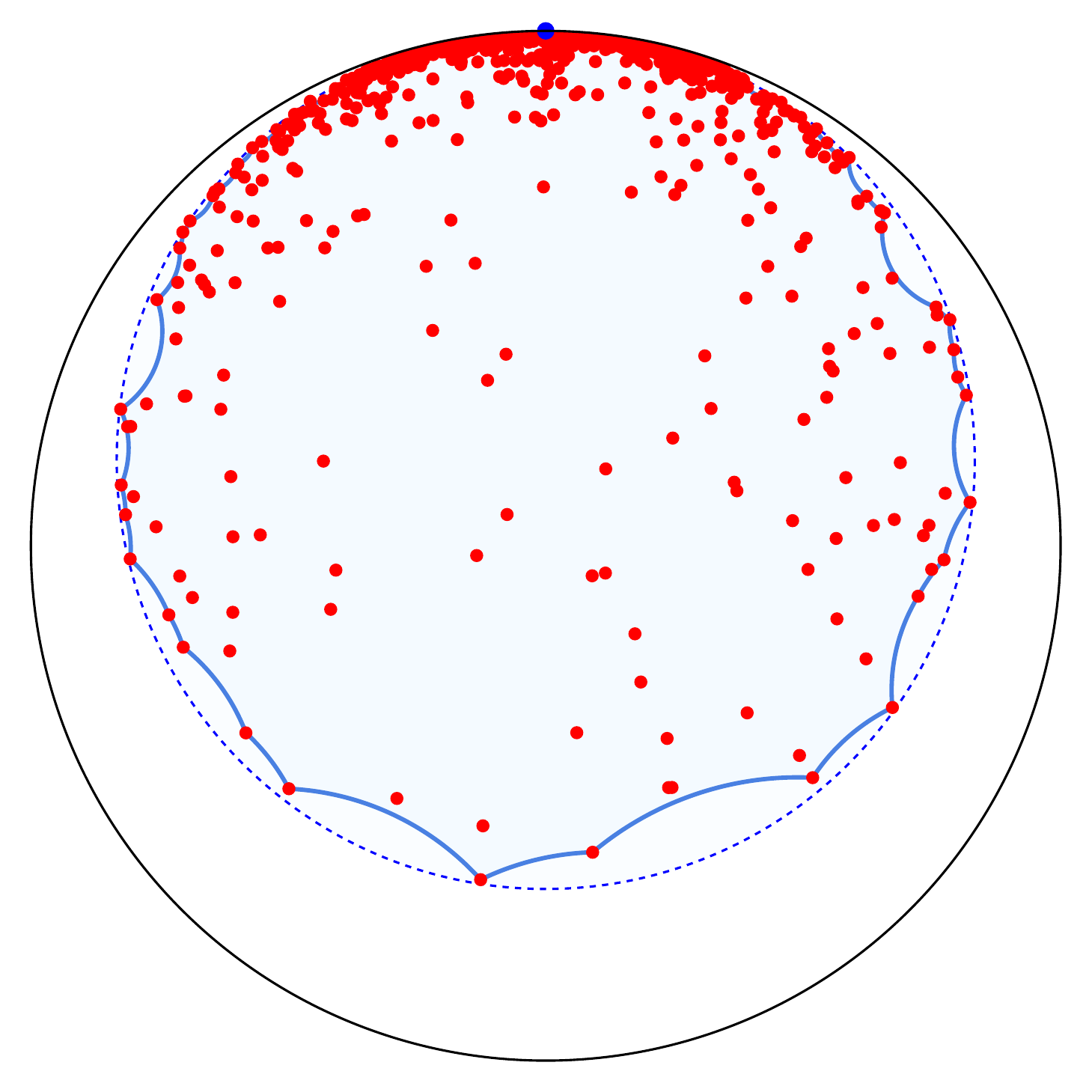}};
		\end{scope}
		\begin{scope}[xshift=8cm]
			\clip (-3.7,-3.8) rectangle (3.7,3.8);
			\node at (0, 0) {
				\includegraphics[width=0.78\linewidth]{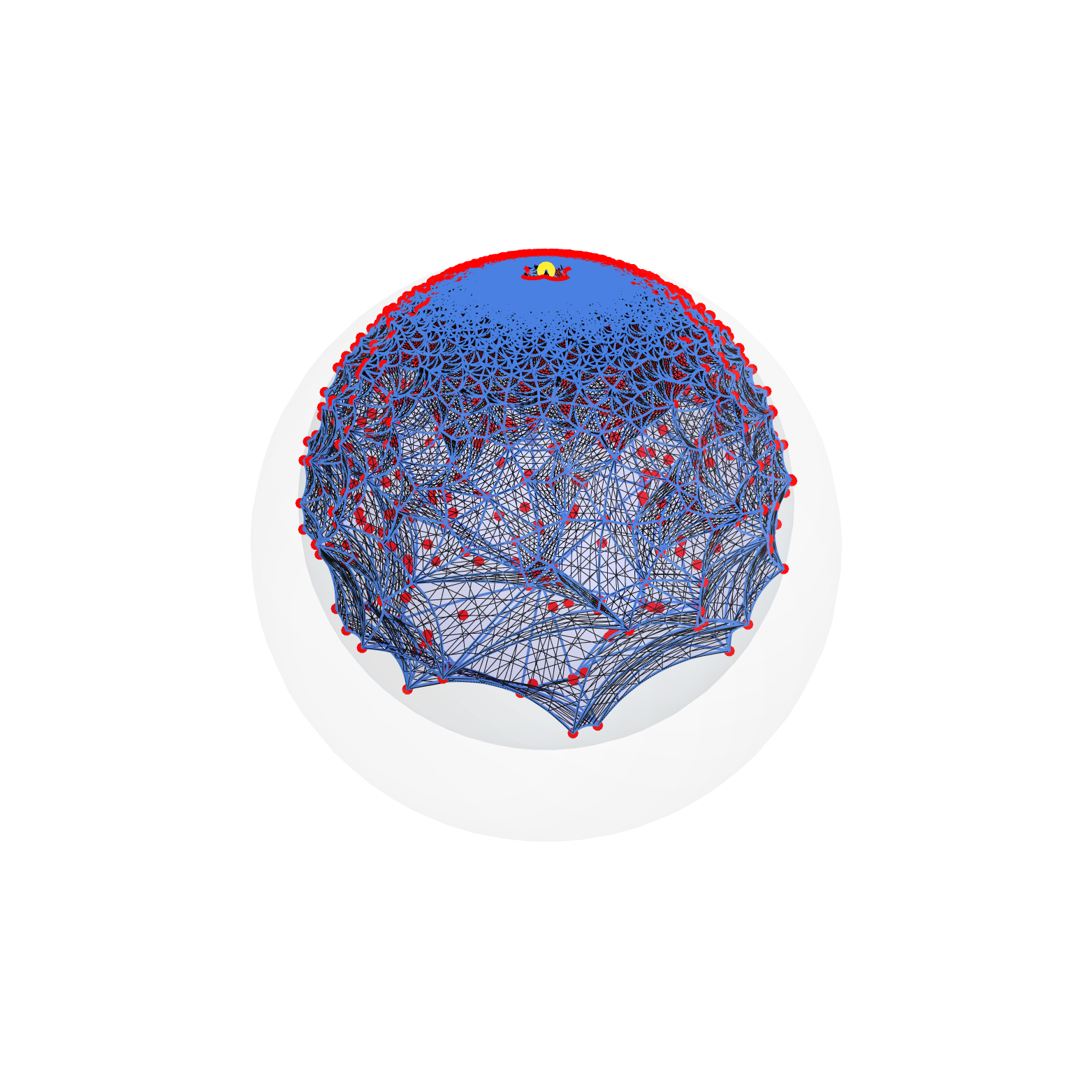}};
		\end{scope}
	\end{tikzpicture}
	\caption{Simulations of the random hyperbolic polyhedra $K_{\gamma,\lambda}$ for $d=2$ (left) and $d=3$ (right) in the Poincar\'e ball model of hyperbolic space. The parameters are $\lambda=0.2$ and $\gamma=5$ (left) or $\gamma=1.5$ (right).}
	\label{fig:simulationpoincare}
\end{figure}

To analyse the local geometry of $K_{\gamma,\lambda}$, we project
its facets to the horosphere which bounds $B_\lambda^\infty$, and which in view of its intrinsic Euclidean geometry can be identified with
$\RR^{d-1}$. This gives rise to a stationary random tessellation of
$\RR^{d-1}$, which we denote by $\cD_{\gamma,\lambda}$. One of the basic
observations of the paper is that $\cD_{\gamma,\lambda}$ can be identified explicitly with a dual Poisson--Laguerre tessellation as investigated in \cite{GiWL}. The principal asymptotic question is
then the behaviour of this tessellation in the low-intensity regime,
in particular for the scaling
\[
	\gamma=\lambda^{d-1},
	\qquad \lambda\downarrow0.
\]
The precise construction of $\cD_{\gamma,\lambda}$, as well as the relevant notions from the upper
halfspace model, ideal boundary, horospheres and horoballs, will be
introduced formally in Section~\ref{sec:Prelim}.

Our model is also motivated by recent developments on ideal
Poisson--Voronoi tessellations (IPVTs) in hyperbolic spaces. It was shown in
\cite{DCE+26} that the low-intensity limit of Poisson--Voronoi
tessellations in $\HH^d$ gives rise to a non-degenerate limiting
tessellation whose cells are unbounded and possess a unique ideal endpoint.
Since then, several further aspects and extensions of the model have been
investigated, including IPVTs beyond real hyperbolic spaces
\cite{Dachille2024,DachilleGrebikKhezeliReckeWilkens2025,DachilleKhezeli2026}, indistinguishability properties of
their cells \cite{Mellick2024}, and explicit face-volume densities and
typical face volumes in the hyperbolic setting
\cite{DachilleThaele2026}. IPVTs have also
found striking applications, for example in connection with Gaboriau's fixed
price conjecture and with upper bounds for the Cheeger constant of
high-genus closed hyperbolic surfaces, see
\cite{Application1,Application2}. The present construction may be viewed as the canonical random-polytope
analogue of this ideal Poisson--Voronoi limit.  In both settings, the geometry of the random polyhedral set (i.e. zero cell of ideal Poisson--Voronoi tessellation)
is organised by a distinguished ideal direction and remains non-trivial in
a low-intensity regime. There is also a closer structural analogy. After
identifying the ideal endpoint of the IPVT zero cell with $\infty$ in the upper halfspace model, its boundary
can be described through a deposition model whose projection gives rise to
a Laguerre tessellation on $\RR^{d-1}$. In the present model, the boundary
facets of $K_{\gamma,\lambda}$ are likewise encoded by a stationary
tessellation obtained through horospherical projection. Here, however, the
basic random object is not a Voronoi cell in $\HH^d$, but the geodesic
convex hull of a Poisson process in a horoball.

\subsection{Overview of the main results}

We now describe informally the main results of the paper. Our first step is
an exact identification of the projected boundary structure of
$K_{\gamma,\lambda}$. The facets of the hyperbolic polyhedron
$K_{\gamma,\lambda}$ are supported by totally geodesic hypersurfaces, which
in the upper halfspace model are represented by Euclidean hemispheres
whose boundary is orthogonal to $\RR^{d-1}\times\{0\}$. Projecting such facets to the
horosphere, or equivalently to  $\RR^{d-1}$,
gives rise to a random simplicial tessellation of $\RR^{d-1}$, denoted by
$\cD_{\gamma,\lambda}$.

Our first main observation is that this tessellation is not an ad hoc
object, but belongs to the broad class of dual Poisson--Laguerre
tessellations as studied in \cite{GiWL}. In that framework the tessellation
is generated by a Poisson point process on $\RR^{d-1}\times\RR$ whose
intensity measure has a density of the form
\[
	(\bv,h)\mapsto f(h)
\]
with respect to product Lebesgue measure, where $f:\RR\to\RR$ is a non-negative
locally integrable height function satisfying the admissibility assumptions
of \cite[Definition~3.4]{GiWL}. In our model this height function can be
identified explicitly, see Theorem~\ref{thm:DlambdaIsTessellation}. Namely, after the transformation $(\bv,y)\mapsto(\bv,y^2)$,
the point process $\eta_{\gamma,\lambda}$ becomes a Poisson point process
on $\RR^{d-1}\times(0,\infty)$ with height density
\[
	f_{\gamma,\lambda}(h)
	=
	\frac{\gamma}{2}h^{-(d+1)/2}{\bf 1}\{h\ge\lambda^2\}.
\]

The second step is quantitative. We compute explicitly the cell intensity of
the random tessellation $\cD_{\gamma,\lambda}$. If
$\xi_{d-1}(\gamma,\lambda)$ denotes the intensity of full-dimensional
cells, then
Theorem~\ref{thm:CellIntensity} gives an explicit integral representation
for $\xi_{d-1}(\gamma,\lambda)$. The formula involves two components: a hypergeometric term, coming from a
Poisson void probability, and a geometric integral $I_d(s)$, which
encodes the mean volume of simplices with vertices in a Euclidean ball.
This computation is not a consequence of the Poisson--Laguerre
representation alone and requires the use of a paraboloid
Blaschke--Petkantschin transformation. Although we obtain an exact formula
only for the full-dimensional cell intensity, the subsequent local
convergence theorem, together with uniform stabilization estimates, allows
us to prove convergence of the intensities of all lower-dimensional faces
as well, see Corollary~\ref{cor:AllFaceIntensities}.

We then pass to the low-intensity regime, where $\gamma=\lambda^{d-1}$ and $\lambda\downarrow0$.
This scaling should be viewed as a horospherical, or cusp-type, scaling near
an ideal boundary point. Indeed, horospheres carry an intrinsic Euclidean
geometry, and this is also visible in the Laguerre representation. Under the
critical scaling $\gamma=\lambda^{d-1}$, the height density becomes
\[
	f_{\lambda^{d-1},\lambda}(h)
	=
	\frac{1}{2}\lambda^{d-1}h^{-(d+1)/2}{\bf 1}\{h\ge\lambda^2\},
\]
so that for every fixed $x>0$,
\[
	\int_{\lambda^2}^x
	f_{\lambda^{d-1},\lambda}(h)\,\dint h
	=
	\frac{1}{d-1}
	\left(
	1-\lambda^{d-1}x^{-\frac{d-1}{2}}
	\right)
	\longrightarrow
	\frac{1}{d-1},
	\qquad \lambda\downarrow0.
\]
Thus the height measure collapses to $\frac{1}{d-1}$ times the Dirac measure at zero, meaning that the height coordinates of the input
point process collapse to zero, and one obtains the classical Poisson--Delaunay tessellation in
$\RR^{d-1}$ as a scaling limit. More precisely, using general convergence criteria for Laguerre tessellations obtained in \cite{GiWL26} we deduce that, under a suitable coupling, the skeleton
$\skel(\cD_{\lambda^{d-1},\lambda})$ of the dual Poisson--Laguerre
tessellation $\cD_{\lambda^{d-1},\lambda}$ converges locally to the
skeleton of the Poisson--Delaunay tessellation
$\cD_{1/(d-1)}$. That is, for every ball $\BB_R^{d-1}$ in $\RR^{d-1}$ of radius $R>0$, it holds that
\[
	\PP\left(
	\skel(\cD_{\lambda^{d-1},\lambda})\cap \BB_R^{d-1}
	=
	\skel(\cD_{1/(d-1)})\cap \BB_R^{d-1}
	\right)
	\longrightarrow 1,
	\qquad \lambda\downarrow0,
\]
see Theorem~\ref{thm:SkeletonConvergence}. This local coupling provides
information of a different nature than convergence of the cell intensity
alone, since it controls the tessellation itself inside bounded observation
windows. It implies convergence in probability of all bounded-window statistics that are determined by the tessellation inside a fixed
observation window. Combined with uniform moment bounds for local face
counts, it also yields convergence of all face intensities, as already anticipated above. More precisely,
if $\xi_k(\gamma,\lambda)$ denotes the intensity of the
$k$-dimensional faces of $\cD_{\gamma,\lambda}$, then
\[
	\xi_k(\lambda^{d-1},\lambda)
	\longrightarrow
	\xi_k^{\operatorname{PD}}\Big(\frac{1}{d-1}\Big),
	\qquad k\in\{0,\ldots,d-1\},
\]
where $\xi_k^{\operatorname{PD}}(\frac{1}{d-1})$ denotes the
$k$-face intensity of the limiting Poisson--Delaunay tessellation in
$\RR^{d-1}$. Moreover, together with convergence of the
full-dimensional cell intensity, the local convergence theorem implies weak convergence of the typical cell of
$\cD_{\lambda^{d-1},\lambda}$ to the typical cell of the
Poisson--Delaunay tessellation as follows from \cite{GKT24}.

A further result concerns the volume, one of the classical functionals in
the theory of random polytopes. In our horoball model the global volume of
$K_{\gamma,\lambda}$, as well as the global missed volume inside
$B_\lambda^\infty$, is infinite. We therefore localize in the horospherical direction. For a bounded and open
subset $W\subset\RR^{d-1}$, which we identify with a set of ideal points of $\HH^d$ different from $\infty$, we consider the unbounded horospherical cylinder $C_\lambda(W)$ that is the intersection of $B_\lambda^\infty$ with the union of all geodesics joining a point of $W$ to $\infty$.
Note that these geodesics meet the boundary of $B_\lambda^\infty$ orthogonally, and that for $W_\lambda:=\partial B_\lambda^\infty\cap C_\lambda(W)$ we have
\begin{equation}
	\label{eqn:VolumeCylinder}
	\cH_{\HH^d}^{d-1}(W_\lambda) = (d-1)\, \vol_{\HH^d}\bigl(C_\lambda(W)\bigr)  = \vol_{\RR^{d-1}}\bigl(\tfrac{1}{\lambda} W\bigr).
\end{equation}
We consider the normalised
local volume and surface-area functionals
\begin{equation}\label{def:local_vol}
	V_{\gamma,\lambda}(W)
	:=
	\frac{
		\vol_{\HH^d}\bigl(
		K_{\gamma,\lambda}\cap C_\lambda(W)
		\bigr)
	}{
		\vol_{\HH^d}\bigl(C_\lambda(W)\bigr)
	},
	\qquad
	S_{\gamma,\lambda}(W)
	:=
	\frac{
		\cH_{\HH^d}^{d-1}\bigl(
		\partial K_{\gamma,\lambda}\cap C_\lambda(W)
		\bigr)
	}{
		\cH_{\HH^d}^{d-1}\bigl(
		\partial B_\lambda^\infty \cap C_\lambda(W)
		\bigr)
	}.
\end{equation}
In Theorem~\ref{thm:LocalVolume}, we derive exact integral representations
for both
$\EE V_\gamma = \EE V_{\gamma,\lambda}(W)$ and
$\EE S_\gamma = \EE S_{\gamma,\lambda}(W)$, and also make explicit that these expected values are independent of $W$ and $\lambda$.
These formulas are parallel to the
cell-intensity formula in Theorem~\ref{thm:CellIntensity}, but contain
additional geometric factors associated with the supporting simplex of a
projected cell. In the volume case, this factor is the hyperbolic volume of the simplex with one ideal vertex associated
with the projected cell, whereas in the
surface-area case it is the intrinsic hyperbolic $(d-1)$-volume of the
facet contained in the supporting totally geodesic hypersurface.

We further establish in Corollary~\ref{cor:Efron} an Efron-type identity which relates the vertex
intensity of the projected tessellation to the expected local missed
volume. More precisely, we show that
\[
	\lambda^{d-1}\,\xi_0(\gamma,\lambda)
	=
	\frac{\gamma}{(d-1)}\,
	\bigl( 1 - \EE V_{\gamma}\bigr).
\]
This identity is the hyperbolic analogue of the classical relation between
the expected number of vertices of a random polytope and its expected missed
volume. The behaviour of the volume and surface-area functionals in the
present model is, however, qualitatively different from that in the usual
Euclidean theory. For random polytopes in a fixed Euclidean convex body,
the high-intensity limit fills the underlying container, and the volume and
surface area converge to the corresponding functionals of that body. In the
present unbounded setting the hyperbolic volume of the
observation cylinder diverges as $\lambda\downarrow0$, whereas, in the
critical regime $\gamma=\lambda^{d-1}$, the expected unnormalised local
volume and, for $d\ge3$, also the expected unnormalised local surface area
converge to finite limits proportional to
$\lambda^{d-1}\,\mathcal{H}_{\HH^d}^{d-1}(W_\lambda)=\vol_{\RR^{d-1}}(W)$. The corresponding constants admit geometric
representations in terms of volumes of ideal hyperbolic simplices and their
facets. The case of dimension $d=2$ is exceptional. In this case, the limiting facets are ideal geodesic
segments of infinite hyperbolic length, and the expected local boundary
length grows logarithmically as $\lambda\downarrow0$. After the
respective normalizations, both the volume and surface-area ratios
therefore tend to zero. Finally, the Efron-type identity converts the
local-volume asymptotics into a second-order expansion for the vertex
intensity.

\section{Preliminaries}\label{sec:Prelim}

\subsection{Notation}

We write $\|\cdot\|$ for the Euclidean norm and $\vol_{\RR^{d-1}}$ for the Lebesgue measure in $\RR^{d-1}$. If
$\bx_1,\ldots,\bx_m\in\RR^{d-1}$, then $[\bx_1,\ldots,\bx_m]$
denotes their Euclidean convex hull. The $(d-1)$-dimensional Euclidean
volume of the simplex $[\bx_1,\ldots,\bx_d]$ is denoted by
\[
	\Delta_{d-1}(\bx_1,\ldots,\bx_d)
	:=
	\vol_{\RR^{d-1}}([\bx_1,\ldots,\bx_d]).
\]
We denote for $m\in\NN$ by $\kappa_m=\frac{\pi^{m/2}}{\Gamma(1+\frac{m}{2})}$ the volume of the $m$-dimensional Euclidean unit ball $\BB^m$, and by $\sigma$ the spherical Lebesgue measure on
$\SS^{m-1}:=\partial\, \BB^m$. It is normalised in such a way that $\sigma(\SS^{m-1})=m\kappa_m$. Furthermore, $o$ denotes the origin of $\RR^{d-1}$, and
$\BB_R^{d-1}$ denotes the Euclidean ball in $\RR^{d-1}$ with centre $o$ and
radius $R>0$. For a point process $\zeta$ and $m\in\NN$, we write
$(\zeta)_{\neq}^m$ for the set of $m$-tuples of pairwise distinct points
of $\zeta$.

For a subset $A$ of a topological space we write $\partial A$, $\operatorname{cl}\,A$ and $\operatorname{int}\,A$ for the topological boundary, closure and interior of $A$. Moreover, the restriction of a measure $\mu$ to a measurable set $A$ is denoted by $\mu|_A$. The cardinality of a set $A$ is denoted by $\#A$.

We shall use the Gauss hypergeometric function. For $a\in\CC$ and
$n\in\NN$, the rising factorial is denoted by $a^{\overline{n}}:=a(a+1)\cdots(a+n-1)$, $n\geq 1$,
and we put $a^{\overline{0}}:=1$. If $a\notin\{0,-1,-2,\ldots\}$, then $a^{\overline{n}}=\frac{\Gamma(a+n)}{\Gamma(a)}$.
For $a,b,c\in\CC$ with $c\notin\{0,-1,-2,\ldots\}$, the Gauss
hypergeometric function is defined on the unit disc by the series
\[
	{}_2F_1(a,b;c;z)
	=
	\sum_{n=0}^{\infty}
	\frac{a^{\overline{n}}b^{\overline{n}}}{n!c^{\overline{n}}}z^n,
	\qquad |z|<1,
\]
and elsewhere by analytic continuation whenever this is well defined.

\subsection{Hyperbolic space and upper halfspace model}

In this paper we consider the $d$-dimensional hyperbolic space $\HH^d$ of constant sectional curvature $-1$. We work with the upper halfspace model, in which $\HH^d$ is identified with the product space $\RR^{d-1}\times(0,\infty)$. Points are represented as tuples $(\bv,y)$, where $\bv=(v_1,\ldots,v_{d-1})\in\RR^{d-1}$ and $y\in(0,\infty)$. We refer to $\bv$ as the spatial coordinate and to $y$ as the height coordinate of the point $(\bv,y)\in\HH^d$.   In this representation, the hyperbolic metric is given by
\begin{equation}\label{eq:HyperbolicMetric}
	\dint s^2 = y^{-2}\left(\dint v_1^2+\ldots+\dint v_{d-1}^2+\dint y^2\right).
\end{equation}
The associated Riemannian volume measure on $\HH^d$ is therefore
\[
	\dint\!\vol_{\HH^d}(\bv,y)
	= y^{-d}\,\dint\bv\dint y = y^{-d} \,\dint v_1\cdots\dint v_{d-1}\dint y.
\]
In particular, for any measurable set $A\subset\HH^d$, its hyperbolic volume is given by
\begin{equation}\label{eq:HyperbolicVolume}
	\vol_{\HH^d}(A)
	= \int_A y^{-d}\,\dint v_1\cdots\dint v_{d-1}\,\dint y.
\end{equation}
Moreover, we denote by $\cH^{d-1}_{\HH^d}$ the $(d-1)$-dimensional hyperbolic Hausdorff measure.
In what follows we will also use the Lebesgue measure on $\RR^{d-1}$, namely
\[
	\dint\!\vol_{\RR^{d-1}}(\bv) = \dint\bv
	= \dint v_1\cdots\dint v_{d-1}.
\]
For background material on hyperbolic geometry and especially the upper halfspace model we refer the reader to \cite{BenedettiPetronio,Ratcliffe} and \cite[Chapter 36]{Voight}.

\subsection{Geodesics, horospheres and convexity}

Geodesic lines in $\HH^d$ admit a simple geometric description in the upper halfspace model. In this model the ideal boundary is represented by
the extended Euclidean space
\[
	\partial_\infty \HH^d
	:=
	\RR^{d-1}\cup\{\infty\}.
\]
The points of $\RR^{d-1}$ correspond to the ideal endpoints approached as
$y\downarrow0$, while the additional point $\{\infty\}$ corresponds to the
ideal endpoint approached along vertical curves as $y\to\infty$. With this convention, the geodesic lines in $\HH^d$ are precisely
of the following two types. First, for every $\bw\in\RR^{d-1}$, the vertical
Euclidean ray $\{(\bw,y):y>0\}$
is a geodesic line. Its two ideal endpoints are $\bw$ and $\infty$. Second,
there are the Euclidean semicircles contained in
$\RR^{d-1}\times(0,\infty)$ which meet $\RR^{d-1}\times\{0\}$ orthogonally. Their two ideal endpoints are the two Euclidean endpoints of
the semicircle, viewed as points of
$\RR^{d-1}\subset\partial_\infty\HH^d$. In particular, every geodesic line in the upper halfspace model is determined by its two distinct ideal endpoints in $\partial_\infty\HH^d$.

Totally geodesic hypersurfaces in $\HH^d$ play the role of affine
hyperplanes in Euclidean geometry. In the upper halfspace model they are
precisely of the following two types. They are either vertical Euclidean
hyperplanes of the form
\[
	\{(\bv,y)\in\HH^d:\langle \bv,\bu\rangle=a\},
	\qquad \bu\in\SS^{d-2},\ a\in\RR,
\]
or Euclidean hemispheres contained in $\RR^{d-1}\times(0,\infty)$ which
meet $\RR^{d-1}\times\{0\}$
orthogonally. The ideal boundary of a vertical totally geodesic hypersurface
contains the point $\infty$, whereas the ideal boundary of a hemispherical
totally geodesic hypersurface is a Euclidean $(d-2)$-sphere in
$\RR^{d-1}\subset\partial_\infty\HH^d$. As in Euclidean space, every totally geodesic hypersurface separates
$\HH^d$ into two connected components. These components are called
hyperbolic halfspaces. Thus, in the upper halfspace model, hyperbolic
halfspaces are bounded either by vertical Euclidean hyperplanes or by
Euclidean hemispheres orthogonal to $\RR^{d-1}\times\{0\}$.

Geodesic spheres in $\HH^d$ are sets of points at fixed hyperbolic distance
from a centre point in $\HH^d$. In the upper halfspace model they are represented by Euclidean spheres contained in $\RR^{d-1}\times(0,\infty)$, although their Euclidean centres and radii are nonlinear functions of the hyperbolic centre and radius.

Horospheres are limiting analogues of geodesic spheres whose centres tend to
an ideal boundary point $\bu\in\partial_\infty\HH^d$. In the upper halfspace model their Euclidean
description depends on the ideal point $\bu$. If $\bu=\infty$, then the
corresponding horospheres are the horizontal Euclidean hyperplanes
\[
	H_\lambda:=\{(\bv,y)\in\HH^d:y=\lambda\},
	\qquad \lambda>0.
\]
The horoballs based at $\infty$ are the regions
\[
	\{(\bv,y)\in\HH^d:y\ge \lambda\}.
\]
If $\bu\in\RR^{d-1}$, then the corresponding horospheres are Euclidean
spheres contained in $\RR^{d-1}\times(0,\infty)$ and tangent to
$\RR^{d-1}\times\{0\}$ at $(\bu,0)$. The horoballs based at $\bu$ are the
Euclidean balls in the upper halfspace bounded by these tangent spheres. In particular, geodesic spheres are compact hypersurfaces centred at points of
$\HH^d$, whereas horospheres are non-compact hypersurfaces associated with
ideal boundary points.

Next we recall two notions of convexity in hyperbolic space. A set
$C\subset\HH^d$ is called geodesically convex if, for any two points
$\bx,\by\in C$, the geodesic segment joining $\bx$ and $\by$ is
contained in $C$. Given a locally finite set $X\subset\HH^d$, its
geodesic convex hull is defined by
\[
	\conv(X)
	:=
	\bigcap_{\substack{C \text{ is geodesically convex }\\X\subseteq C}}C.
\]
Equivalently, $\conv(X)$ is the union of all geodesic simplices with
vertices in $X$. This is the direct analogue of the Euclidean convex hull,
with Euclidean line segments replaced by hyperbolic geodesic segments.

There is a second, stronger notion of convexity which is adapted to the
horospherical geometry of $\HH^d$. A set $C\subset\HH^d$ is called
horospherically convex, or $h$-convex, if it can be represented as an
intersection of horoballs. Accordingly, the $h$-convex hull of a set $X\subset\HH^d$ is defined as the smallest $h$-convex set containing $X$, namely
\[
	\conv_h(X)
	:=
	\bigcap_{\substack{B \text{ is a horoball }\\X\subseteq B}}B.
\]
Since every horoball is geodesically convex, every $h$-convex set is
geodesically convex, and consequently $\conv(X)\subseteq \conv_h(X)$.
The converse implication is false in general: a geodesically convex set need
not be an intersection of horoballs.

Note that, by definition, $\conv_h(X)$ is always closed, whereas $\conv(X)$ need not be closed if $X\subseteq\HH^d$ is unbounded, even if it is locally finite.

\section{Model and results}

\subsection{Random convex hulls in a horoball}

We now describe the random geometric model studied in this paper. Let $\gamma>0$ and let $\eta_{\gamma}$ be a Poisson point process on $\HH^d$ with intensity measure $\gamma\cdot\vol_{\HH^d}$. In particular, for every Borel set $A\subset\HH^d$ with $\vol_{\HH^d}(A)<\infty$, the random variable $\eta_{\gamma}(A)$ is Poisson distributed with mean $\gamma\cdot\vol_{\HH^d}(A)$, and the counts on disjoint sets are independent.

We fix $\lambda>0$ and consider the horosphere $H_\lambda$, which corresponds to the ideal boundary point at $\infty$ in the upper halfspace model. We restrict the Poisson point process to the horoball $B_\lambda^\infty$ bounded by $H_\lambda$. From here on, we work with the locally finite point process
\[
	\eta_{\gamma,\lambda}:=\eta_{\gamma}\cap B_\lambda^\infty.
\]
In the present setting, the horospherical convex hull of $\eta_{\gamma,\lambda}$ is trivial: since $B_\lambda^\infty$ is itself horospherically convex and contains $\eta_{\gamma,\lambda}$, we have
\[
	\conv_h(\eta_{\gamma,\lambda})=B_\lambda^\infty\qquad\text{a.s.}
\]
Indeed, since $\conv_h(\eta_{\gamma,\lambda})$ is horospherically convex and $\eta_{\gamma,\lambda}\subseteq B_\lambda^\infty$ we have $\conv_h(\eta_{\gamma,\lambda})\subseteq B_\lambda^\infty$. If $\conv_h(\eta_{\gamma,\lambda})\neq B_\lambda^\infty$, then there would be another horoball $B_{\lambda'}^\infty$ with $\lambda'>\lambda$, such that $\conv_h(\eta_{\gamma,\lambda})\subseteq B_{\lambda'}^\infty$. Since $\vol_{\HH^d}(B_{\lambda}^\infty\setminus B_{\lambda'}^\infty)=\infty$, with probability $1$ there is a point $(\bv,y)\in\eta_{\gamma}$ with $(\bv,y)\not\in B_{\lambda'}^\infty$, which leads to a contradiction. In contrast, the geodesic convex hull
\[
	K_{\gamma,\lambda}:=\conv(\eta_{\gamma,\lambda}),
\]
of $\eta_{\gamma,\lambda}$ is a highly nontrivial probabilistic object, see Figure~\ref{fig:d=2} for simulations in the upper halfspace model in the planar case $d=2$ and Figure \ref{fig:simulationpoincare} for simulations in the Poincar\'e ball model for $d=2$ and $d=3$.

\begin{figure}[t]
	\centering
	\includegraphics[width=0.45\columnwidth]{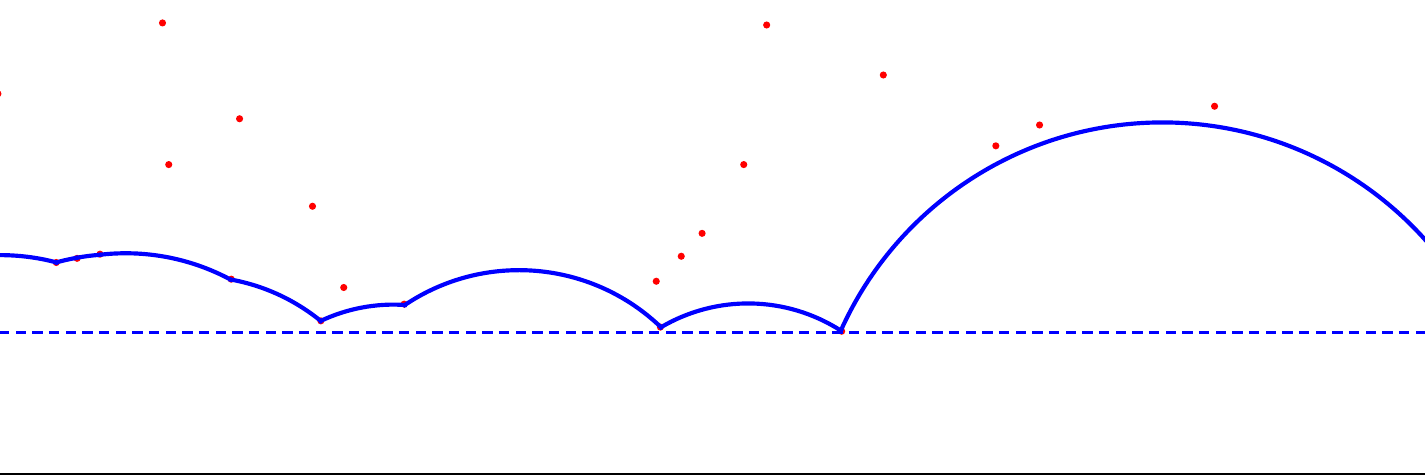}\qquad
	\includegraphics[width=0.45\columnwidth]{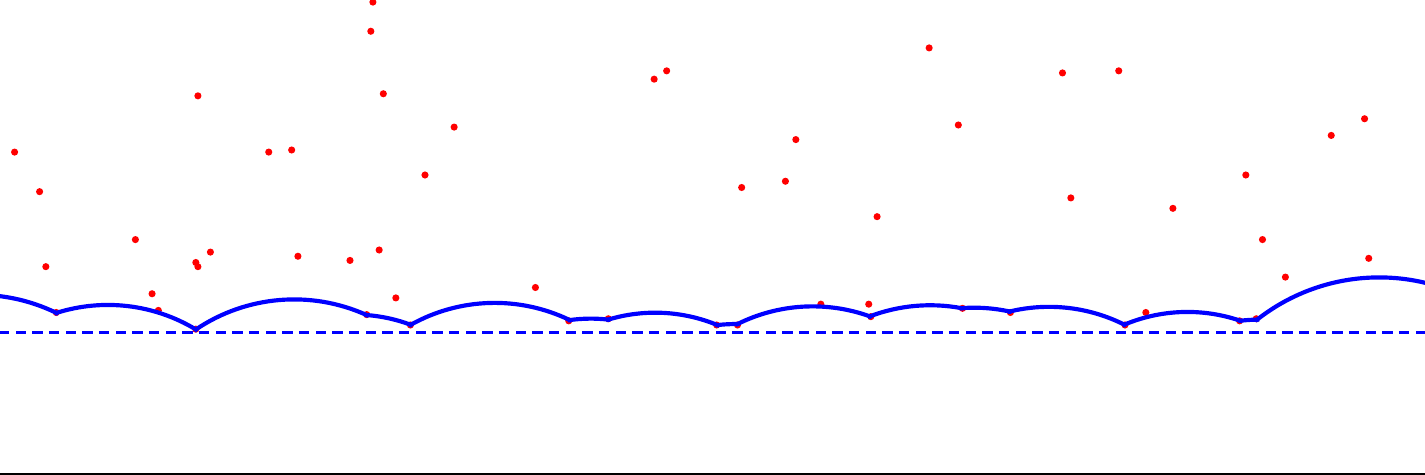}
	\caption{Simulation of a Poisson point process $\eta_{\gamma,\lambda}$ and the Poisson polyhedron $K_{\gamma,\lambda}$ for $d=2$ with $\gamma=1$ (left) and $\gamma=5$ (right).}
	\label{fig:d=2}
\end{figure}

The present setting is closely related to classical Euclidean models of
random polytopes, but also exhibits fundamental differences. In Euclidean
space, one often studies the convex hull of a homogeneous Poisson point
process restricted to a bounded convex body, leading to random polytopes
whose facial structure and asymptotic geometry depend on both the ambient
container set and the intensity of the process. If, on the other hand, the
Poisson process is restricted to a Euclidean halfspace, then the convex hull
is essentially trivial: almost surely, its closure coincides with the entire
halfspace. In hyperbolic space the situation is different. The horoball
$B_\lambda^\infty$ is itself a natural convex container, but it is bounded
by a horosphere and not by a totally geodesic hypersurface. Consequently,
there are two natural convexity structures to distinguish. As noted above,
the horospherical, or $h$-convex, hull of $\eta_{\gamma,\lambda}$ is
trivial, while the geodesic convex hull $K_{\gamma,\lambda}$ is
nontrivial. Its boundary is supported by totally geodesic hypersurfaces,
and the negative curvature of $\HH^d$ prevents this geodesic convex hull
from filling the horoball.

A further notable distinction from the classical Euclidean theory of random
polytopes is that $K_{\gamma,\lambda}$ is unbounded. Its unboundedness,
however, is highly structured. We shall use the term hyperbolic
polyhedron for a closed, geodesically convex subset of $\HH^d$ whose
boundary is supported by totally geodesic hypersurfaces and whose set of faces is
locally finite. Here local finiteness means that every compact subset of
$\HH^d$ meets only finitely many faces. Thus such a polyhedron may have
infinitely many faces globally, in contrast to a finite hyperbolic polytope.

For the random set $K_{\gamma,\lambda}$, the face structure is indeed
infinite. This reflects the fact that the restricted Poisson process
$\eta_{\gamma,\lambda}$ is locally finite but unbounded in the
horospherical directions. At the same time, all unbounded directions of
$K_{\gamma,\lambda}$ point towards the same ideal boundary point, namely
the point $\infty$ associated with the horoball $B_\lambda^\infty$. In this
sense the convex hull is one-ended.

\begin{theorem}\label{thm:UnboundedOneEnded}
	With probability one, $K_{\gamma,\lambda}$ is a closed unbounded hyperbolic
	polyhedron with locally finite but countably infinite face structure.
	Moreover, its closure in $\HH^d\cup\partial_\infty\HH^d$ satisfies $\operatorname{cl}(K_{\gamma,\lambda})\cap\partial_\infty\HH^d=\{\infty\}$.
\end{theorem}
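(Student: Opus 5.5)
The plan is to work in the upper halfspace model and exploit two almost-sure properties of the Poisson process $\eta_{\gamma,\lambda}$. The intensity measure has density $\gamma y^{-d}$ on $\RR^{d-1}\times[\lambda,\infty)$.

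\textbf{Step 1 (points at every height).} For every slab $\{(\bv,y): |\bv|\le R,\ y\in[a,b]\}$ the expected number of points is finite. So $\eta_{\gamma,\lambda}$ is locally finite in $\HH^d$, not merely in the Euclidean sense. For any horizontal region $\{\bv\in A\}\times[\lambda,2\lambda]$ with $A$ of infinite Lebesgue measure the count is a.s.\ infinite. In particular, a.s.\ there are points with arbitrarily large $|\bv|$. Consequently $K_{\gamma,\lambda}$ is unbounded, and it has infinitely many vertices: every point of $\eta_{\gamma,\lambda}$ at height in $[\lambda,\lambda+\varepsilon]$ lying below all hemispheres spanned by other points is extreme. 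Showing infinitely many extreme points can be done by the ergodicity/stationarity in $\bv$: the process of vertices is invariant under horizontal translations (isometries fixing $\infty$), and it is a.s.\ nonempty, since the lowest point in a suitable region is extreme. Hence it is a.s.\ infinite, giving countably infinite faces.

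\textbf{Step 2 (local finiteness and polyhedrality).} This step is the main obstacle. The key lemma I would aim for: a.s., for every compact $D\subset\HH^d$, only finitely many geodesic simplices with vertices in $\eta_{\gamma,\lambda}$ are needed to cover $K_{\gamma,\lambda}\cap D$. Equivalently, only finitely many points are vertices of facets meeting $D$. To prove it, take $D\subset\{|\bv|\le R,\ y\ge\delta\}$. A geodesic simplex containing a point of $D$ and having a vertex far away, say $|\bv|\ge M$, must have that vertex at Euclidean height $\gtrsim M$. This is because a hemisphere or geodesic arc from a point near $(\bv,\lambda)$ with $|\bv|$ huge passing over $D$ has radius at least of order $M$, and its endpoint lies at height comparable to its spread. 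More cleanly, I would prove that the point $(\bv,y)$ "sees" $D$ only if $y\ge c|\bv|$. I would then show that, a.s., points in the cone $\{y\ge c|\bv|\}$ far away are dominated. Concretely, a.s.\ there exist points of $\eta_{\gamma,\lambda}$ near heights in $[\lambda,2\lambda]$ surrounding $D$ whose convex hull already contains a neighbourhood of $D$'s vertical shadow cylinder up to any height. Any facet through $D$ must therefore have its supporting hemisphere below these points, which bounds its Euclidean radius and hence its vertices. This yields local finiteness, closedness (a locally finite union of closed simplices is closed), and facets supported on totally geodesic hypersurfaces. The proof that vertical shadows are a.s.\ covered is where I expect most of the work to lie. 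I would do it by a Borel--Cantelli argument over a grid of cubes in $\RR^{d-1}$ at height near $\lambda$, using that each cube contains a point with positive probability and that boundedly many cubes suffice around $D$.

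\textbf{Step 3 (ideal boundary).} $\infty$ lies in the closure, because by Step 1 there are points with $y\to\infty$, and $K_{\gamma,\lambda}\supset$ geodesics between distant low points, which are semicircles of large radius. For $\bw\in\RR^{d-1}$, I would show that a neighbourhood $\{|\bv-\bw|<r, y<\lambda/2\}$ misses $K_{\gamma,\lambda}$. A hemisphere with centre $\bw$ and radius $\rho$ bounds a halfspace $\{|(\bv,y)-(\bw,0)|>\rho\}$ that is geodesically convex. Any $\rho<\lambda$ gives a halfspace containing $B_\lambda^\infty\supset\eta_{\gamma,\lambda}$, hence containing $K_{\gamma,\lambda}$. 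Thus no sequence in $K_{\gamma,\lambda}$ converges to $\bw$, so $\operatorname{cl}(K_{\gamma,\lambda})\cap\partial_\infty\HH^d=\{\infty\}$. This step is deterministic and easy.
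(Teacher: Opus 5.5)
\textbf{Verdict: genuine gap in Step~2.} Steps~1 and~3 are essentially sound. Step~3 is the paper's argument in different words. The stationarity argument for infinitely many vertices can be made rigorous: a point $(\bv_0,y_0)\in\eta_{\gamma,\lambda}$ is extreme whenever the closed halfball of radius $y_0$ centred at $(\bv_0,0)$ contains no other process point. The problem is Step~2, which carries the real content of the theorem.

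\textbf{The geometric input of Step~2 is false.} For $d=2$, the geodesic triangle with vertices $(-M,\lambda),(0,\lambda),(M,\lambda)$ contains the vertical segment $\{0\}\times[\lambda,\sqrt{M^2+\lambda^2}]$. So simplices with arbitrarily distant vertices at minimal height meet a fixed compact set, and the family of all simplices whose union is $K_{\gamma,\lambda}$ is not locally finite. Even for facets (simplices with an empty supporting halfball), a far vertex need not be high. What is true, and what your cube argument is really aiming at, is this: almost surely, empty halfballs whose boundary meets $D$ have radius bounded by a random finite constant. Proving that needs a multiscale Borel--Cantelli argument, because at a single fixed scale the grid only gives positive probability. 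Also, ``convex hull contains the shadow cylinder up to any height'' cannot hold literally for finitely many points.

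\textbf{The decisive missing step.} Even granting that only finitely many empty-halfball simplices meet $D$, you never show that these simplices make up the boundary of $\widehat K:=\operatorname{cl}(K_{\gamma,\lambda})$. That is, you need every boundary point of $\widehat K$ to lie on a simplex with vertices in $\eta_{\gamma,\lambda}$. Your ``equivalently'' and your remark that a locally finite union of closed simplices is closed both presuppose this. Without it, closedness and polyhedrality are not established.

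\textbf{How the paper fills it.} First, $\widehat K$ contains the vertical ray above each of its points. Second, by Theorem~\ref{thm:DlambdaIsTessellation} and the Poisson--Laguerre theory, the projected empty-halfball simplices form a locally finite tessellation of $\RR^{d-1}$; in particular they cover it. Hence each vertical line meets $\widehat K$ in a ray starting on a lifted simplex, which gives $\partial\widehat K\subseteq K_{\gamma,\lambda}$ and so $K_{\gamma,\lambda}=\widehat K$.

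\textbf{A self-contained alternative.} If you want to avoid the Laguerre theory, pass to the Klein model.
\begin{enumerate}
\item There $K_{\gamma,\lambda}$ is the Euclidean convex hull of a set $X$ accumulating only at the point $e$ representing $\infty$. So $\widehat K=\conv(X\cup\{e\})$ inside the open ball.
\item A supporting hyperplane $H$ of $\widehat K$ through a point of $\HH^d$ cannot contain $e$. If it did, it would be a vertical hyperplane with the whole process on one side, which almost surely does not happen.
\item Hence the corresponding face is $\conv(X\cap H)$ with $X\cap H$ finite, and it lies in $K_{\gamma,\lambda}$.
\item A compactness argument on limits of supporting hyperplanes meeting $D$ then gives local finiteness of the faces.
\end{enumerate}
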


As a direct consequence, purely combinatorial characteristics of $K_{\gamma,\lambda}$, such as the total number of $k$-faces, are almost surely infinite and therefore not informative. In particular, counting faces and taking expectations does not lead to meaningful quantities in this unbounded setting. Instead, one must adopt a local point of view. Such local descriptions are better suited to capture the geometry of $K_{\gamma,\lambda}$ and will form the basis of our analysis.

\begin{remark}\label{rmk:Lambda_iso}
	Let us remark on the role of $\lambda>0$ in our model. First, for $\lambda'>\lambda$ the Hausdorff distance between $B^\infty_{\lambda}$ and $B^\infty_{\lambda'}$ is $\log \frac{\lambda'}{\lambda}$ and we have $B^\infty_{\lambda'}\subset B^\infty_{\lambda}$. In particular, for a fixed realization of $\eta_{\gamma}$ we have
	$\eta_{\gamma,\lambda'}\subset \eta_{\gamma,\lambda}$ and therefore $K_{\gamma,\lambda'}\subset K_{\gamma,\lambda}$.
	Furthermore, $B^\infty_{\lambda}$ and $B^\infty_{\lambda'}$ are congruent and in the halfspace model the isometry that maps $B^\infty_\lambda$ to $B^\infty_{\lambda'}$ is the radial rescaling $\bx\mapsto \frac{\lambda'}{\lambda}\bx$, that is, $\frac{\lambda'}{\lambda} B^\infty_{\lambda} = B^\infty_{\lambda'}$.
	Since $\eta_{\gamma}$ is isometry invariant, we find that $\frac{1}{\lambda} K_{\gamma,\lambda}$ and $K_{\gamma,1}$ are equivalent in distribution.
\end{remark}

\subsection{Relation to dual Poisson--Laguerre tessellations}

Instead of studying the local geometry of $K_{\gamma,\lambda}$ directly in
the upper halfspace model, we pass to a Euclidean tessellation obtained by
projecting the relevant boundary facets to $\RR^{d-1}$. For this purpose, let $(\bv_1,y_1),\ldots,(\bv_d,y_d)\in(\eta_{\gamma,\lambda})_{\neq}^d$. We denote by $B(\bv_1,y_1,\ldots,\bv_d,y_d)$ the a.s.\ uniquely determined closed Euclidean halfball in
$\RR^{d-1}\times(0,\infty)$ which has a centre $(\bw,0)\in \RR^{d-1}\times \{0\}$ and whose boundary contains the points
$(\bv_1,y_1),\ldots,(\bv_d,y_d)$, see the left panel of Figure~\ref{fig:TransformationT}. We then define
$\cD_{\gamma,\lambda}$ as the collection of all projected simplices
\[
	[\bv_1,\ldots,\bv_d]:=\conv\{\bv_1,\ldots,\bv_d\}\subset\RR^{d-1}
\]
for which this halfball is empty of further points of the process. That is,
\begin{align*}
	\cD_{\gamma,\lambda}
	:=
	\Big\{
	 & [\bv_1,\ldots,\bv_d]\subset\RR^{d-1}:
	(\bv_1,y_1),\ldots,(\bv_d,y_d)\in
	(\eta_{\gamma,\lambda})_{\neq}^d,                          \\
	 & \qquad \bv_1,\ldots,\bv_d \text{ affinely independent},
	\;\operatorname{int}B(\bv_1,y_1,\ldots,\bv_d,y_d)
	\cap\eta_{\gamma,\lambda}
	=
	\varnothing
	\Big\}.
\end{align*}
This is the projected tessellation associated with the boundary facets of
$K_{\gamma,\lambda}$.

Next, we recall from \cite[Section~3.1.4]{GiWL} the construction of the
dual Laguerre tessellation in the form needed below. Let $\zeta$ be a
locally finite point set in $\RR^{d-1}\times\RR$. For distinct points
\[
	\bz_i=(\bv_i,h_i)\in \RR^{d-1}\times\RR,
	\qquad i\in\{1,\ldots,d\},
\]
whose spatial coordinates $\bv_1,\ldots,\bv_d$ are affinely independent, there
is a unique translate of the standard downward paraboloid
\[
	\Pi:=\{(\bv,h)\in\RR^{d-1}\times\RR:h\le -\|\bv\|^2\}
\]
whose boundary contains $\bz_1,\ldots,\bz_d$. We denote this paraboloid by
$\Pi(\bz_1,\ldots,\bz_d)$, that is,
\[
	\Pi(\bz_1,\ldots,\bz_d)
	=
	\{(\bv,h)\in\RR^{d-1}\times\RR:
	h\le s-\|\bv-\bw\|^2\}
\]
for uniquely determined $\bw=\bw(\bz_1,\dotsc,\bz_d)\in\RR^{d-1}$ and $s=s(\bz_1,\dotsc,\bz_d)\in\RR$. The point
$(\bw,s)$ is called the apex of the paraboloid, and we write $A(\bz_1,\ldots,\bz_d):=\bw$
for its spatial coordinate, see the right panel of Figure~\ref{fig:TransformationT}.
The dual Laguerre tessellation generated by $\zeta$ is the collection of
simplices
\begin{align*}
	\mathcal L^*(\zeta)
	:=
	\Big\{
	 & [\bv_1,\ldots,\bv_d]:
	\bz_i=(\bv_i,h_i)\in\zeta,\ i\in\{1,\ldots,d\},\\
	 & \qquad \bv_1,\ldots,\bv_d \text{ affinely independent},
	\ \operatorname{int}\Pi(\bz_1,\ldots,\bz_d)\cap\zeta=\varnothing
	\Big\}.
\end{align*}
For the Poisson point processes
considered below this collection is a locally finite simplicial random
tessellation of $\RR^{d-1}$, see \cite[Section~3.1.4]{GiWL}. In this case
we call $\mathcal L^*(\zeta)$ the dual Poisson--Laguerre tessellation
generated by $\zeta$.

\begin{figure}[t]
	\centering
	\begin{tikzpicture}[scale=1.05,>=stealth]

		\def\c{2.4}      
		\def\R{1.8}      
		\def\xone{1.15}  
		\def\xtwo{3.35}  
		\def\sep{8.2}    

		\pgfmathsetmacro{\yone}{sqrt(\R^2-(\xone-\c)^2)}
		\pgfmathsetmacro{\ytwo}{sqrt(\R^2-(\xtwo-\c)^2)}
		\pgfmathsetmacro{\hone}{\yone*\yone}
		\pgfmathsetmacro{\htwo}{\ytwo*\ytwo}
		\pgfmathsetmacro{\Rsq}{\R*\R}
		\pgfmathsetmacro{\xmin}{\c-\R}
		\pgfmathsetmacro{\xmax}{\c+\R}

		\begin{scope}

			\draw[->] (-0.6,0) -- (5.3,0);
			\draw[->] (-0.3,-0.2) -- (-0.3,4.0);


			\draw[very thick,blue,domain=\xmin:\xmax,samples=100]
			plot (\x,{sqrt(\R*\R-(\x-\c)^2)});

			\filldraw[red] (\xone,\yone) circle (1.5pt) node[above left] {$(v_1,y_1)$};
			\filldraw[red] (\xtwo,\ytwo) circle (1.5pt) node[above right] {$(v_2,y_2)$};

			\draw[densely dashed] (\xone,0) -- (\xone,\yone);
			\draw[densely dashed] (\xtwo,0) -- (\xtwo,\ytwo);
			\fill (\xone,0) circle (1.1pt) node[below] {$v_1$};
			\fill (\xtwo,0) circle (1.1pt) node[below] {$v_2$};

			\fill (\c,0) circle (1.1pt) node[below] {$w$};

			\draw[densely dashed] (\c,0) -- (\c,\R);


		\end{scope}

		\draw[->,thick] (6,1.35) -- (7,1.35)
		node[midway,above] {$T$};

		\begin{scope}[shift={(\sep,0)}]

			\draw[->] (-0.6,0) -- (5.3,0);
			\draw[->] (-0.3,-0.2) -- (-0.3,4.0);


			\draw[very thick,blue,domain=\xmin:\xmax,samples=100]
			plot (\x,{\Rsq-(\x-\c)^2});

			\filldraw[red] (\xone,\hone) circle (1.5pt) node[above left] {$\bz_1\,=\,(v_1,h_1)$};
			\filldraw[red] (\xtwo,\htwo) circle (1.5pt) node[above right] {$\bz_2\,=\,(v_2,h_2)$};

			\draw[densely dashed] (\xone,0) -- (\xone,\hone);
			\draw[densely dashed] (\xtwo,0) -- (\xtwo,\htwo);
			\fill (\xone,0) circle (1.1pt) node[below] {$v_1$};
			\fill (\xtwo,0) circle (1.1pt) node[below] {$v_2$};

			\fill (\c,0) circle (1.1pt) node[below] {$w$};
			\filldraw[blue] (\c,\Rsq) circle (1.5pt); 
			\draw[densely dashed] (\c,0) -- (\c,\Rsq);


		\end{scope}
	\end{tikzpicture}
	\caption{Left: Two points $(v_1,y_1),(v_2,y_2)\in\HH^2$ with the ball $B(v_1,y_1,v_2,y_2)$ and its centre $w$. Right: The same picture after the transformation $T$ with $T(v_1,y_1)=\bz_1$ and $T(v_2,y_2)=\bz_2$. The half-circle becomes the parabola $\Pi(\bz_1,\bz_2)$ with spatial apex coordinate $w=A(\bz_1,\bz_2)$.}
	\label{fig:TransformationT}
\end{figure}

We now connect this construction with the random set
$K_{\gamma,\lambda}$. Consider the transformation
\[
	T:\RR^{d-1}\times(0,\infty)\to\RR^{d-1}\times(0,\infty),
	\qquad
	T(\bv,y):=(\bv,y^2).
\]
By the mapping theorem for Poisson point processes
\cite[Theorem~5.1]{LPbook}, the image process $\widetilde\eta_{\gamma,\lambda}:=T(\eta_{\gamma,\lambda})$
is a Poisson point process on $\RR^{d-1}\times(0,\infty)$ with intensity
measure $\Lambda_{\gamma,\lambda}$ given by
\begin{equation}\label{eq:IntensityEta}
	\dint\Lambda_{\gamma,\lambda}(\bv,h)
	=
	\frac{\gamma}{2}h^{-(d+1)/2}
	\mathbf 1\{h\ge \lambda^2\}\,
	\dint\bv\dint h .
\end{equation}
The density in \eqref{eq:IntensityEta} is admissible in the sense of
\cite[Definition~3.4]{GiWL}, and hence the theory of
Poisson--Laguerre tessellations applies to $\widetilde\eta_{\gamma,\lambda}$.

The following result identifies the projected tessellation obtained from
hyperbolic empty halfballs with the dual Poisson--Laguerre tessellation
generated by the transformed process $\widetilde\eta_{\gamma,\lambda}$.

\begin{theorem}\label{thm:DlambdaIsTessellation}
	For every $\gamma,\lambda>0$, the random collection
	$\mathcal D_{\gamma,\lambda}$ coincides almost surely with the dual
	Poisson--Laguerre tessellation generated by
	$\widetilde\eta_{\gamma,\lambda}$. More precisely,
	\[
		\mathcal D_{\gamma,\lambda}
		=
		\mathcal L^*(\widetilde\eta_{\gamma,\lambda})
		\qquad \text{a.s.},
	\]
	where $\widetilde\eta_{\gamma,\lambda}$ has intensity measure
	$\Lambda_{\gamma,\lambda}$ given by \eqref{eq:IntensityEta}.
\end{theorem}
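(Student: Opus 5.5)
The argument is a pointwise (deterministic) statement about the transformation $T$, combined with the fact that $T$ is a bijection of $\RR^{d-1}\times(0,\infty)$. The key observation is that $T$ maps Euclidean hemispheres orthogonal to $\RR^{d-1}\times\{0\}$ onto translates of the downward paraboloid $\partial\Pi$. It also maps the corresponding closed halfballs onto the intersection of the paraboloid region with the open upper halfspace.

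\textbf{The key computation.} A point $(\bv,y)$ lies in the closed halfball with centre $(\bw,0)$ and radius $r$ if and only if $\|\bv-\bw\|^2+y^2\le r^2$. Writing $h=y^2$, this is equivalent to
\[
h\le r^2-\|\bv-\bw\|^2 .
\]
Thus
\[
T\bigl(B\bigr)=\Pi_{\bw,s}\cap\bigl(\RR^{d-1}\times(0,\infty)\bigr),\qquad s=r^2,
\]
where $\Pi_{\bw,s}$ denotes the translate of $\Pi$ with apex $(\bw,s)$. The same equivalence with strict inequalities shows that $T$ maps the interior of $B$ onto $\operatorname{int}\Pi_{\bw,s}\cap(\RR^{d-1}\times(0,\infty))$, and maps the spherical boundary onto the paraboloid boundary.

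\textbf{Matching the generating tuples.} Take $(\bv_1,y_1),\dots,(\bv_d,y_d)\in(\eta_{\gamma,\lambda})^d_{\neq}$ with $\bv_1,\dots,\bv_d$ affinely independent, and set $\bz_i=T(\bv_i,y_i)$. Suppose a halfball centred on $\RR^{d-1}\times\{0\}$ has all $(\bv_i,y_i)$ on its boundary. Its image is then a translated paraboloid whose boundary contains all $\bz_i$. By the uniqueness in the definition of $\Pi(\bz_1,\dots,\bz_d)$, this image is $\Pi(\bz_1,\dots,\bz_d)$, and moreover $s=r^2>0$.

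Conversely, the paraboloid $\Pi(\bz_1,\dots,\bz_d)$ has apex height $s\ge h_1>0$, since $\bz_1$ lies on its boundary. Pulling it back under $T$ therefore gives a genuine halfball of radius $\sqrt s$ through the $(\bv_i,y_i)$. This establishes existence and uniqueness of $B(\bv_1,y_1,\dots,\bv_d,y_d)$ whenever the $\bv_i$ are affinely independent, which is slightly stronger than the almost sure statement.

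\textbf{Matching the emptiness conditions.} Every point of $\widetilde\eta_{\gamma,\lambda}$ has positive height, and $T$ is a bijection between $\eta_{\gamma,\lambda}$ and $\widetilde\eta_{\gamma,\lambda}$. Hence
\[
\operatorname{int}B\cap\eta_{\gamma,\lambda}=\varnothing
\quad\Longleftrightarrow\quad
\operatorname{int}\Pi(\bz_1,\dots,\bz_d)\cap\widetilde\eta_{\gamma,\lambda}=\varnothing .
\]
Since $T$ preserves the spatial coordinates, the projected simplices $[\bv_1,\dots,\bv_d]$ agree on both sides. Consequently $\cD_{\gamma,\lambda}=\mathcal L^*(\widetilde\eta_{\gamma,\lambda})$ holds for every realisation, and in particular almost surely.

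\textbf{Intensity measure.} By the mapping theorem \cite[Theorem~5.1]{LPbook}, $\widetilde\eta_{\gamma,\lambda}$ is Poisson. Its intensity follows from the substitution $h=y^2$, $\dint y=\tfrac12 h^{-1/2}\dint h$:
\[
\gamma\,y^{-d}\,\dint y=\frac{\gamma}{2}\,h^{-d/2}h^{-1/2}\,\dint h=\frac{\gamma}{2}\,h^{-(d+1)/2}\,\dint h,
\]
restricted to $h\ge\lambda^2$, which is exactly \eqref{eq:IntensityEta}.

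\textbf{Main obstacle.} The only delicate step is the boundary and interior bookkeeping. One must check that the paraboloid interior, which also extends below height zero, contains no extra points; this holds because the process lives in $h>0$. One must also check that strict versus non-strict inequalities are preserved under $T$, which follows from the equivalence above. Given the admissibility asserted before the theorem, the fact that $\mathcal L^*(\widetilde\eta_{\gamma,\lambda})$ is a genuine tessellation is taken from \cite{GiWL} and needs no separate argument here.
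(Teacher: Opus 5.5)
Your proposal is correct and follows the same route as the paper's proof: the mapping theorem gives the intensity, $T$ sends the halfball $\{\|\bv-\bw\|^2+y^2\le r^2\}$ onto the part of the paraboloid with apex $(\bw,r^2)$ lying in the upper halfspace, and bijectivity of $T$ together with preservation of spatial coordinates transfers both the emptiness condition and the simplex. Your extra checks only make explicit what the paper leaves implicit: the Jacobian $\dint y=\tfrac12 h^{-1/2}\dint h$; the bound $s\ge h_1>0$, which guarantees that the pulled-back paraboloid is a genuine halfball; and the remark that the part of $\operatorname{int}\Pi$ below height zero is irrelevant because the process lives in $h\ge\lambda^2$.
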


Figure~\ref{fig:d=3} shows a realization of $K_{\gamma,\lambda}$ for $d=3$, the view
from above in the right panel displays the induced tessellation
$\cD_{\gamma,\lambda}$ of $\RR^{2}$.

There is a close formal analogy between the present construction and the
deposition model used in \cite[Sections~5.1 and~5.3.2]{DCE+26} to describe
the zero cell of the ideal Poisson--Voronoi tessellation. After sending the
unique ideal endpoint of that cell to $\infty$, its boundary is generated
by a Poisson family of Euclidean hemispheres in the upper halfspace model.
Passing to the associated stationary deposition model and projecting its
boundary orthogonally onto $\RR^{d-1}$ yields a stationary Laguerre
tessellation. This tessellation belongs to the $\beta'$-class with parameter $\beta=d$ introduced in \cite{GKT22}.

A similar empty-hemisphere mechanism occurs in the present setting. Under
the transformation $(\bv,y)\mapsto(\bv,y^2)$ the Euclidean hemispheres supporting $K_{\gamma,\lambda}$
become the surface of downward paraboloids, and the transformed Poisson process has intensity density $(\bv,h)\mapsto\frac{\gamma}{2}\,h^{-\frac{d+1}{2}}{\bf 1}\{h\ge\lambda^2\}$ by \eqref{eq:IntensityEta}. If the truncation at $\lambda^2$ were formally omitted, this power law
would correspond to a $\beta$-model of random Laguerre tessellation introduced in \cite{GKT22}, but with parameter $\beta=-\frac{d+1}{2}<-1$,
which lies outside the admissible range $\beta>-1$. The lower height cutoff is therefore
essential: it removes the non-integrable singularity at $h=0$ and makes
the driving Poisson process locally finite. Consequently,
$\cD_{\gamma,\lambda}$ is a well-defined dual Poisson--Laguerre
tessellation, but it does not belong to the $\beta$-Delaunay or $\beta'$-Delaunay family from \cite{GKT22}.

\subsection{The cell intensity}

\begin{figure}[t]
	\centering
	\includegraphics[width=0.55\columnwidth]{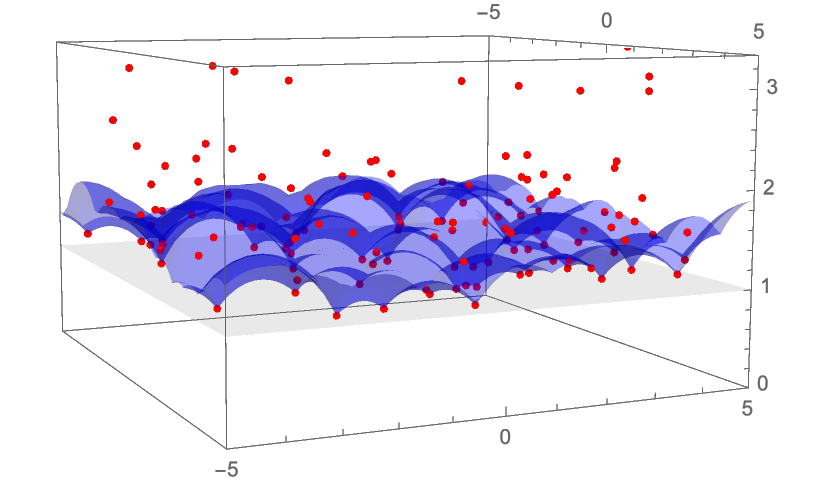}\qquad
	\includegraphics[width=0.3\columnwidth]{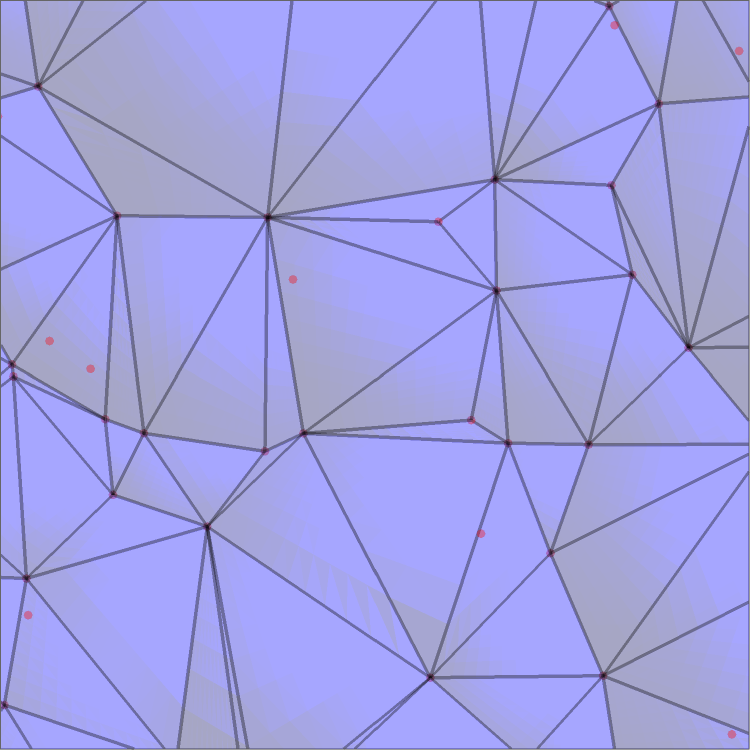}
	\caption{Simulation of a Poisson point process $\eta_{\gamma,\lambda}$ and the Poisson polyhedron $K_{\gamma,\lambda}$ for $d=3$ and $\lambda=1$ in the upper halfspace model. Left: generic view. Right: View from above of the same realization.}
	\label{fig:d=3}
\end{figure}

We denote by $\xi_{d-1}(\gamma,\lambda)$ the cell intensity of the random tessellation $\cD_{\gamma,\lambda}$, which intuitively can be described as the mean number of cells of $\cD_{\gamma,\lambda}$ per unit volume in $\RR^{d-1}$. Since, by Theorem~\ref{thm:DlambdaIsTessellation}, we have $\cD_{\gamma,\lambda}=\mathcal{L}^*(\widetilde{\eta}_{\gamma,\lambda})$, we may formally define $\xi_{d-1}(\gamma,\lambda)$ as follows. For a bounded Borel set $W\subset\RR^{d-1}$ with $0<\vol_{\RR^{d-1}}(W)<\infty$,
\begin{align}
	\xi_{d-1}(\gamma,\lambda) & := \frac{1}{d!\,\vol_{\RR^{d-1}}(W)}\EE\hspace{-0.2cm}\sum_{(\bz_1,\ldots,\bz_d)\in(\widetilde\eta_{\gamma,\lambda})_{\neq}^d}\hspace{-0.4cm}{\bf 1}\{A(\bz_1,\ldots,\bz_d)\in W\}
	\, {\bf 1}\{{\rm int}\,\Pi(\bz_1,\ldots,\bz_d)\cap\widetilde\eta_{\gamma,\lambda}=\varnothing\}.\label{eq:CellIntensity}
\end{align}
This definition does not depend on the choice of $W$. Indeed, since the height density $f_{\gamma,\lambda}$ does not depend on the spatial coordinate, the point process $\widetilde\eta_{\gamma,\lambda}$ and hence the tessellation $\cD_{\gamma,\lambda}$ are stationary with respect to the spatial coordinate, so that the expectation in \eqref{eq:CellIntensity} defines a translation-invariant, locally finite measure as a function of $W$, which is therefore a constant multiple of $\vol_{\RR^{d-1}}$.
In the next theorem we derive an explicit representation for $\xi_{d-1}(\gamma,\lambda)$ in terms of the Gauss hypergeometric function $_2F_1$.

\begin{theorem}\label{thm:CellIntensity}
	For $\gamma,\lambda>0$ we have
	\[
		\xi_{d-1}(\gamma,\lambda) =\frac{\gamma^d\,\lambda^{1-d}}{2d}
		\int_0^1 (1-s)^{\frac{d-3}{2}}
		\exp\!\Big(
		-\gamma\frac{\kappa_{d-1}}{d+1}
		s^{\frac{d+1}{2}}
		{}_2F_1\!\Big(\tfrac{d+1}{2},\tfrac{d+1}{2};\tfrac{d+3}{2};s\Big)
		\Big)\,I_d(s)\,\dint s,
	\]
	where
	\[
		I_d(s) := \int_{(\RR^{d-1})^d}
		\Delta_{d-1}(\bz_1,\ldots,\bz_d)
		\prod_{i=1}^d(1-\|\bz_i\|^2)^{-\frac{d+1}{2}}
		\,{\bf 1}\{\|\bz_i\|^2\le s\}
		\,\dint\bz_i,\qquad s\in(0,1),
	\]
	with $\Delta_{d-1}(\bv_1,\ldots,\bv_d)=\vol_{\RR^{d-1}}([\bv_1,\ldots,\bv_d])$.
\end{theorem}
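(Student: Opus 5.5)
We would compute $\xi_{d-1}(\gamma,\lambda)$ directly from \eqref{eq:CellIntensity}: apply the multivariate Mecke formula, then a paraboloid Blaschke--Petkantschin substitution, then an explicit void-probability computation, and finally rescale to reach the stated form.

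\textbf{Step 1 (Mecke formula).} By the multivariate Mecke formula \cite{LPbook} applied to $\widetilde\eta_{\gamma,\lambda}$, the expectation in \eqref{eq:CellIntensity} equals
\[
\Bigl(\frac{\gamma}{2}\Bigr)^d\int \mathbf 1\{A(\bz_1,\ldots,\bz_d)\in W\}\,
\exp\bigl(-\Lambda_{\gamma,\lambda}(\operatorname{int}\Pi(\bz_1,\ldots,\bz_d))\bigr)\prod_{i=1}^d h_i^{-\frac{d+1}{2}}\mathbf 1\{h_i\ge\lambda^2\}\,\dint\bv_i\dint h_i .
\]
Here we used the Poisson void probability. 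A.s.\ the $\bv_i$ are affinely independent, so the paraboloid is well defined.

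\textbf{Step 2 (paraboloid Blaschke--Petkantschin).} We reparametrise the $d$-tuple by the apex $(\bw,t)$ of $\Pi(\bz_1,\ldots,\bz_d)$ together with the spatial coordinates $\bv_1,\ldots,\bv_d$, setting $h_i=t-\|\bv_i-\bw\|^2$. This map has $(d-1)d+d$ coordinates on each side, as required. Its Jacobian is computed by differentiating $h_i$ in $(\bw,t)$: the derivative in $\bw$ gives the rows $2(\bv_i-\bw)$ and the derivative in $t$ gives $1$. The resulting determinant is $2^{d-1}(d-1)!\,\Delta_{d-1}(\bv_1,\ldots,\bv_d)$, so
\[
\prod\dint\bz_i = 2^{d-1}(d-1)!\,\Delta_{d-1}(\bv_1,\ldots,\bv_d)\,\dint\bw\,\dint t\prod\dint\bv_i .
\]
The $\bw$-integral then yields $\vol_{\RR^{d-1}}(W)$, which cancels with the normalisation. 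The constants combine as $(\gamma/2)^d\,2^{d-1}(d-1)!/d! = \gamma^d/(2d)$.

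\textbf{Step 3 (void probability).} Only points with $h\ge\lambda^2$ matter, so the constraint forces $t\ge\lambda^2$. Integrating over $\bv$ first,
\[
\Lambda_{\gamma,\lambda}(\operatorname{int}\Pi)=\frac{\gamma}{2}\kappa_{d-1}\int_{\lambda^2}^t (t-h)^{\frac{d-1}{2}}h^{-\frac{d+1}{2}}\,\dint h .
\]
We substitute $h=t(1-x)$ and set $s:=1-\lambda^2/t\in(0,1)$. The powers of $t$ cancel, leaving
\[
\frac{\gamma\kappa_{d-1}}{2}\int_0^s x^{\frac{d-1}{2}}(1-x)^{-\frac{d+1}{2}}\,\dint x .
\]
By the Euler-type series $\int_0^s x^{a-1}(1-x)^{-b}\,\dint x=\frac{s^a}{a}\,{}_2F_1(a,b;a+1;s)$, this equals $\gamma\frac{\kappa_{d-1}}{d+1}s^{\frac{d+1}{2}}{}_2F_1\!\bigl(\tfrac{d+1}{2},\tfrac{d+1}{2};\tfrac{d+3}{2};s\bigr)$, which is the exponent in the statement.

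\textbf{Step 4 (rescaling).} We substitute $\bv_i=\bw+\sqrt t\,\bz_i$. Then $h_i=t(1-\|\bz_i\|^2)$, and the condition $h_i\ge\lambda^2$ becomes $\|\bz_i\|^2\le s$. Collecting powers of $t$:
\[
t^{d(d-1)/2}\ \text{(from }\textstyle\prod\dint\bv_i),\qquad t^{(d-1)/2}\ \text{(from }\Delta_{d-1}),\qquad t^{-d(d+1)/2}\ \text{(from }\textstyle\prod h_i^{-\frac{d+1}{2}}).
\]
These combine to $t^{-(d+1)/2}$, and what remains of the $\bz$-integral is exactly $I_d(s)$. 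Finally we change variables $t=\lambda^2/(1-s)$, so $\dint t=\lambda^2(1-s)^{-2}\,\dint s$. This turns $t^{-(d+1)/2}\,\dint t$ into $\lambda^{1-d}(1-s)^{\frac{d-3}{2}}\,\dint s$, and the formula of Theorem~\ref{thm:CellIntensity} follows.

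The main obstacle is Step 2: the Jacobian must be justified rigorously, including the factor $2^{d-1}$ and the bijectivity of the map on the set of affinely independent configurations. We would prove it by a direct determinant computation, subtracting the row of one point from the others to reduce the determinant to the Euclidean simplex volume. Steps 3 and 4 are routine bookkeeping, apart from checking that all integrals are finite so that Fubini applies.
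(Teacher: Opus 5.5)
Your proposal is correct and follows essentially the same route as the paper. That route is the Mecke formula, then a paraboloid Blaschke--Petkantschin change of variables, then the hypergeometric void probability, and finally the substitution $s=1-\lambda^2/t$; all your constants and powers of $t$ check out. The only difference is that you split the paper's map $G$ into two steps and derive its Jacobian directly instead of citing \cite{GKT22}: first the apex-plus-spatial-coordinates step with Jacobian $2^{d-1}(d-1)!\,\Delta_{d-1}(\bv_1,\ldots,\bv_d)$, then the rescaling $\bv_i=\bw+\sqrt{t}\,\bz_i$. With your apex height $t=r^2$ for the paper's radius $r$, the two steps compose exactly to the paper's factor $2^d r^{d^2}(d-1)!\,\Delta_{d-1}(\bz_1,\ldots,\bz_d)$.
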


\begin{remark}\label{rem:Hypergeometric}
	The function ${}_2F_1\!\big(\tfrac{d+1}{2},\tfrac{d+1}{2};\tfrac{d+3}{2};s\big)$ has an alternative more explicit representation, namely
	\[
		{}_2F_1\!\big(\tfrac{d+1}{2},\tfrac{d+1}{2};\tfrac{d+3}{2};s\big)=\frac{2^{\lfloor \frac{d}{2}\rfloor}(d+1)}{(d-1)!!}\cdot\begin{cases}
			\frac{\dint^k}{\dint s^k}\big(-\frac{\log(1-s)}{2s}\big),\qquad              & d=2k+1, \\[0.1cm]
			\frac{\dint^k}{\dint s^k}\big(\frac{\arcsin(\sqrt{s})}{\sqrt{s}}\big),\qquad & d=2k.
		\end{cases}
	\]
	Indeed by \cite[Equation 15.4.1]{NIST} we have ${}_2F_1\!(1,1;2;s)=-\frac{\log(1-s)}{s}$ and by \cite[Equation 15.4.4]{NIST} we get ${}_2F_1\!\big(\frac{1}{2},\frac{1}{2};\frac{3}{2};s\big)=\frac{\arcsin(\sqrt{s})}{\sqrt{s}}$. Now applying
	\[
		\frac{\dint}{\dint s}{}_2F_1\!(a,b;c;s)=\frac{ab}{c}{}_2F_1\!(a+1,b+1;c+1;s),
	\]
	from \cite[Equation 15.5.1]{NIST} iteratively, we obtain the above representation.
\end{remark}

In contrast to the hypergeometric function the integral $I_d(s)$ does not have a simpler representation for general $d$. If $d=2$ we can make the formula for $\xi_{d-1}(\gamma,\lambda)$ in Theorem~\ref{thm:CellIntensity} more explicit. In particular, the geometric integral $I_d(s)$ can in this case be evaluated.

\begin{corollary}\label{cor:CellIntensity_d2}
	Let $d=2$ and $\gamma,\lambda>0$. Then
	\[
		\xi_1(\gamma,\lambda)
		=\frac{\gamma^2}{\lambda}
		\int_0^1 \frac{1}{\sqrt{1-s}}\,
		\Big(\frac{\sqrt{s}}{1-s}-\log\!\Big(\frac{1+\sqrt{s}}{\sqrt{1-s}}\Big)\Big)\,
		\exp\!\Big(
		-2\gamma\Big(\frac{\sqrt{s}}{\sqrt{1-s}}-\arcsin(\sqrt{s})\Big)
		\Big)\,\dint s.
	\]
\end{corollary}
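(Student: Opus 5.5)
The plan is to specialise Theorem~\ref{thm:CellIntensity} to $d=2$ and evaluate its two ingredients, the hypergeometric term and the geometric integral $I_2(s)$, in closed form. For $d=2$ the prefactor is $\gamma^2\lambda^{-1}/4$, the weight is $(1-s)^{-1/2}$, and $\kappa_1=2$. So the exponent reads $-\tfrac{2\gamma}{3}s^{3/2}\,{}_2F_1(\tfrac32,\tfrac32;\tfrac52;s)$. It then suffices to prove two identities:
\[
\tfrac{2}{3}s^{3/2}\,{}_2F_1(\tfrac32,\tfrac32;\tfrac52;s)=2\Big(\tfrac{\sqrt s}{\sqrt{1-s}}-\arcsin\sqrt s\Big),
\qquad
I_2(s)=4\Big(\tfrac{\sqrt s}{1-s}-\log\tfrac{1+\sqrt s}{\sqrt{1-s}}\Big).
\]

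For the hypergeometric identity I will use Remark~\ref{rem:Hypergeometric} with $d=2$, $k=1$. Equivalently, I apply the derivative formula once to ${}_2F_1(\tfrac12,\tfrac12;\tfrac32;s)=\arcsin(\sqrt s)/\sqrt s$. This gives ${}_2F_1(\tfrac32,\tfrac32;\tfrac52;s)=6\,\tfrac{\dint}{\dint s}\big(\arcsin(\sqrt s)/\sqrt s\big)$. A direct differentiation yields $\tfrac{3}{s\sqrt{1-s}}-\tfrac{3\arcsin\sqrt s}{s^{3/2}}$. Multiplying by $\tfrac23 s^{3/2}$ gives the first identity.

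For $I_2(s)$, put $a=\sqrt s$. In $\RR^1$ we have $\Delta_1(z_1,z_2)=|z_1-z_2|$, so
\[
I_2(s)=\int_{-a}^a\int_{-a}^a |z_1-z_2|\,g(z_1)g(z_2)\,\dint z_1\dint z_2,
\qquad g(x)=(1-x^2)^{-3/2}.
\]
The function $g$ has the odd antiderivative $G(x)=x/\sqrt{1-x^2}$. The key trick is to write $|z_1-z_2|=\int_{-a}^a\mathbf 1\{t \text{ lies between } z_1,z_2\}\,\dint t$ and apply Fubini. This turns $I_2(s)$ into
\[
2\int_{-a}^a \big(G(t)-G(-a)\big)\big(G(a)-G(t)\big)\,\dint t .
\]
Since $G$ is odd, the integrand equals $G(a)^2-G(t)^2$. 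Hence
\[
I_2(s)=2\Big(2a\,\tfrac{s}{1-s}-\int_{-a}^a\tfrac{t^2}{1-t^2}\,\dint t\Big).
\]
The remaining elementary integral is $-2a+\log\frac{1+a}{1-a}$. Simplifying with $a\,\tfrac{s}{1-s}+a=\tfrac{a}{1-s}$ and $\tfrac12\log\frac{1+a}{1-a}=\log\frac{1+a}{\sqrt{1-s}}$ gives the second identity.

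Substituting both identities into Theorem~\ref{thm:CellIntensity}, the factor $4$ cancels against the prefactor $1/4$, which yields the stated formula. The only step needing a small idea is the evaluation of $I_2$, via the layer-cake representation of $|z_1-z_2|$. Everything else is routine calculus.
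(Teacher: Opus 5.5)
Your proposal is correct and follows the paper's proof. Like the paper, you specialise Theorem~\ref{thm:CellIntensity} to $d=2$, rewrite the hypergeometric term through Remark~\ref{rem:Hypergeometric} as $6\frac{\dint}{\dint s}\bigl(\arcsin(\sqrt{s})/\sqrt{s}\bigr)$, and evaluate $I_2(s)$ in closed form.

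The only difference is how you compute $I_2(s)$. The paper substitutes $u_i=z_i/\sqrt{1-z_i^2}$ (your $u_i=G(z_i)$) and orders the variables, arriving at $8\int_0^U u^2(1+u^2)^{-1/2}\,\dint u$ with an $\operatorname{arsinh}$ antiderivative. Your layer-cake representation of $|z_1-z_2|$ instead gives the rational integrand $G(a)^2-G(t)^2$. The two computations differ only by an integration by parts and yield the same value.
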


Using the integral representation for $\xi_1(\gamma,\lambda)$ we get the following asymptotics as $\lambda\downarrow 0$.

\begin{corollary}\label{cor:lambda_asymptotics_d2}
	Let $d=2$ and set $\gamma=\lambda$. Then, as $\lambda\downarrow 0$,
	\[
		\xi_1(\lambda,\lambda)
		=1-\pi\lambda+o(\lambda).
	\]
	In particular, $\lim_{\lambda\downarrow 0}\xi_1(\lambda,\lambda)=1$.
\end{corollary}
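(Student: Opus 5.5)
The plan is to start from the explicit formula of Corollary~\ref{cor:CellIntensity_d2} with $\gamma=\lambda$ and expand it directly. The key step is a change of variables that makes the Laplace-type structure visible. I will substitute $s=\sin^2\theta$ and then $u=\tan\theta=\sqrt{s}/\sqrt{1-s}\in(0,\infty)$. Under this substitution $\arcsin\sqrt{s}=\arctan u$, $(1-s)^{-1/2}=\sqrt{1+u^2}$, $\log\bigl((1+\sqrt s)/\sqrt{1-s}\bigr)=\operatorname{arsinh}u$, and $\dint s = 2u\,(1+u^2)^{-2}\,\dint u$. A short computation should then give
\[
	\xi_1(\lambda,\lambda)=2\lambda\int_0^\infty \Big(\frac{u^2}{1+u^2}-\frac{u\operatorname{arsinh}u}{(1+u^2)^{3/2}}\Big)\,e^{-2\lambda u}\,e^{2\lambda\arctan u}\,\dint u .
\]
The factor $e^{-2\lambda u}$ puts the mass at scale $u\asymp 1/\lambda$. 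The leading term $2\lambda\int_0^\infty e^{-2\lambda u}\,\dint u=1$ already gives the limit $1$.

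For the first-order correction I split the integrand into $1$ plus three perturbations and treat each separately.

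\textbf{(a) The $-1/(1+u^2)$ correction.} The deficit $\frac{u^2}{1+u^2}-1=-\frac1{1+u^2}$ contributes $-2\lambda\int_0^\infty \frac{e^{-2\lambda u}e^{2\lambda\arctan u}}{1+u^2}\,\dint u$. This integrand is dominated by $e^{\pi\lambda}/(1+u^2)$, so by dominated convergence the term equals $-2\lambda\cdot\frac{\pi}{2}+o(\lambda)=-\pi\lambda+o(\lambda)$.

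\textbf{(b) The $\operatorname{arsinh}$ term.} Its weight is $O(\log u/u^2)$ and hence integrable. Dominated convergence gives the contribution $-2\lambda C+o(\lambda)$ with $C=\int_0^\infty u\operatorname{arsinh}u\,(1+u^2)^{-3/2}\,\dint u$. Integrating by parts with $\frac{\dint}{\dint u}\bigl(-(1+u^2)^{-1/2}\bigr)=u(1+u^2)^{-3/2}$, the boundary terms vanish and $C=\int_0^\infty(1+u^2)^{-1}\,\dint u=\pi/2$. So this term is again $-\pi\lambda+o(\lambda)$.

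\textbf{(c) The factor $e^{2\lambda\arctan u}$ in the leading part.} I will write
\[
	2\lambda\int_0^\infty e^{-2\lambda u}\bigl(e^{2\lambda\arctan u}-1\bigr)\,\dint u .
\]
Using $e^{x}-1=x+O(x^2)$ with $0\le 2\lambda\arctan u\le\pi\lambda$, this equals $4\lambda^2\int_0^\infty \arctan u\,e^{-2\lambda u}\,\dint u+O(\lambda^2)$. Rescaling $u=v/\lambda$, the integral becomes $\lambda^{-1}\int_0^\infty \arctan(v/\lambda)\,e^{-2v}\,\dint v$. Since $\arctan(v/\lambda)\to\pi/2$ boundedly, this is $\lambda^{-1}\bigl(\tfrac{\pi}{4}+o(1)\bigr)$. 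The term (c) is therefore $+\pi\lambda+o(\lambda)$.

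Summing $1-\pi\lambda-\pi\lambda+\pi\lambda$ gives $\xi_1(\lambda,\lambda)=1-\pi\lambda+o(\lambda)$, which is the claim. The main obstacle I expect is bookkeeping rather than analysis: getting the Jacobian and the trigonometric identities exactly right in the substitution. The three corrections are of the same order and partially cancel, so a sign error would be fatal. Once the $u$-representation is verified, each limit follows from dominated convergence or the single rescaling $u=v/\lambda$, and the cross terms (perturbation (a) or (b) multiplied by $e^{2\lambda\arctan u}-1$) are $O(\lambda^2)$ by the same bounds.
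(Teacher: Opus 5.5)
Your proof is correct and follows essentially the paper's route: the same substitution $s=u^2/(1+u^2)$, the same integration by parts giving $\int_0^\infty u\operatorname{arsinh}(u)(1+u^2)^{-3/2}\,\dint u=\pi/2$, and dominated convergence. The paper avoids your steps (a) and (c) by noting that $\frac{u^2}{1+u^2}=\psi'(u)$ for the exponent $\psi(u)=u-\arctan u$, so $2\lambda\int_0^\infty\psi'(u)e^{-2\lambda\psi(u)}\,\dint u=1$ exactly; thus your leading term, (a) and (c) sum to $1$ identically, and only the $\operatorname{arsinh}$ term needs asymptotic analysis.
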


Our next theorem shows that the limiting phenomenon of Corollary~\ref{cor:lambda_asymptotics_d2} persists for all space dimensions $d\geq 2$, but the argument becomes much more involved.

\begin{theorem}\label{thm:AsymptoticsGeneralD}
	Let $d\geq 2$ and set $\gamma=\lambda^{d-1}$. Then
	\[
		\lim_{\lambda\downarrow0}\xi_{d-1}(\lambda^{d-1},\lambda)
		=
		\frac{2^{d}}{d\,(d-1)^3}\,
		\pi^{\frac{d-2}{2}}\,
		\frac{\Gamma\!\big(\frac{(d-1)^2+1}{2}\big)}{\Gamma\!\big(\frac{(d-1)^2}{2}\big)}\,
		\left(\frac{\Gamma(\frac{d+1}{2})}{\Gamma(\frac d2)}\right)^{\,d-1}=:\xi_{d-1}^{\operatorname{PD}}\Big(\frac{1}{d-1}\Big).
	\]
\end{theorem}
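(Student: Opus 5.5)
Starting from the exact formula of Theorem~\ref{thm:CellIntensity} with $\gamma=\lambda^{d-1}$, the prefactor becomes $\gamma^d\lambda^{1-d}/(2d)=\lambda^{(d-1)^2}/(2d)$, which tends to zero. The integral must therefore blow up at the rate $\lambda^{-(d-1)^2}$. This can only come from $s\uparrow1$, where both $I_d(s)$ and the void exponent degenerate. The plan is to rescale $1-s=\lambda^{2}t$, or whatever power matches the exponent, and then apply dominated convergence.

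The first step is the asymptotics of the exponent. Near $s=1$ the relevant hypergeometric function has $c-a-b=\tfrac{d+3}{2}-(d+1)=-\tfrac{d-1}{2}<0$. Hence
\[{}_2F_1\big(\tfrac{d+1}{2},\tfrac{d+1}{2};\tfrac{d+3}{2};s\big)\sim \frac{\Gamma(\frac{d+3}{2})\Gamma(\frac{d-1}{2})}{\Gamma(\frac{d+1}{2})^2}(1-s)^{-\frac{d-1}{2}},\]
by \cite[15.4.23]{NIST}. The exponent is therefore $\approx -\gamma\,c_d(1-s)^{-(d-1)/2}$ with $c_d=\frac{\kappa_{d-1}}{d+1}\frac{\Gamma(\frac{d+3}{2})\Gamma(\frac{d-1}{2})}{\Gamma(\frac{d+1}{2})^2}$. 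With $\gamma=\lambda^{d-1}$ this suggests the substitution $1-s=\lambda^2 t$, under which the exponent converges to $-c_d\,t^{-(d-1)/2}$. For the dominated bound I would use monotonicity of the hypergeometric function together with a lower bound of the same order valid for $s\ge1/2$. For $s\le 1/2$ the integrand is bounded and the prefactor $\lambda^{(d-1)^2}$ kills that contribution.

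The second step is the asymptotics of $I_d(s)$ as $s\uparrow1$, which I expect to be the main obstacle. In each variable I would substitute $\bz_i=\sqrt{s}\,\bu_i$ with $\bu_i\in\BB^{d-1}$, and then pass to polar coordinates in $r_i=\|\bu_i\|$. The weight $(1-sr_i^2)^{-(d+1)/2}$ concentrates near $r_i=1$, and its radial integral scales like $(1-s)^{-(d-1)/2}$. The dominant configuration has all $d$ points near the sphere $\SS^{d-2}$, so one expects
\[I_d(s)\sim C\,(1-s)^{-d(d-1)/2}\int_{(\SS^{d-2})^d}\Delta_{d-1}(\bu_1,\ldots,\bu_d)\,\sigma^{d}(\dint \bu),\]
with $C$ given by the radial Beta-type integral $\int_0^\infty(1+\rho)^{-(d+1)/2}\dint\rho$ raised to the power $d$ and suitably normalised. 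This must be made rigorous, including a bound $I_d(s)\le C'(1-s)^{-d(d-1)/2}$ uniformly in $s$ to provide domination. The spherical simplex moment is the classical Miles formula, and it produces the Gamma ratios in the claimed constant.

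The third step is to combine the two. After the substitution, $\dint s=\lambda^2\dint t$ and $(1-s)^{(d-3)/2}=\lambda^{d-3}t^{(d-3)/2}$, while $I_d\sim C''\lambda^{-d(d-1)}t^{-d(d-1)/2}$. The powers of $\lambda$ add up to $(d-1)^2+2+(d-3)-d(d-1)=0$, which confirms that the scaling is right. The limit is then
\[\frac{C''}{2d}\int_0^\infty t^{\frac{d-3}{2}-\frac{d(d-1)}{2}}e^{-c_d t^{-(d-1)/2}}\dint t,\]
which is a Gamma integral, and simplifying it should give $\xi^{\rm PD}_{d-1}(\tfrac1{d-1})$. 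As a cross-check, one can compare with Miles' Poisson--Delaunay cell intensity at intensity $\frac{1}{d-1}$, which is consistent with the collapse of the height measure. Alternatively, the whole limit could be derived from Theorem~\ref{thm:SkeletonConvergence} plus uniform integrability, but the direct computation avoids circularity.
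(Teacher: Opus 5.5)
Your proposal is correct and is essentially the paper's proof: start from Theorem~\ref{thm:CellIntensity}, use ${}_2F_1\sim\frac{d+1}{d-1}(1-s)^{-\frac{d-1}{2}}$ (the paper reaches this via Euler's transformation and Gauss summation rather than the connection formula, which amounts to the same thing), obtain $I_d(1-t)\sim C_d\,t^{-\frac{d(d-1)}{2}}$ with $C_d=(d-1)^{-d}\int_{(\SS^{d-2})^d}\Delta_{d-1}\,\dint\sigma^{d}$ by the same radial rescaling, then substitute $1-s=\lambda^2x$ and conclude by dominated convergence and a Gamma integral. The differences are minor. You dominate via the crude bound $I_d(s)\le C'(1-s)^{-d(d-1)/2}$ plus a lower bound on the exponent for $s\ge\frac12$, instead of the paper's explicit error terms. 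You also fold $d=2$ into the same scheme, whereas the paper uses its closed-form $d=2$ corollaries; there the radial Jacobian contributes an unbounded factor $r^{-1}$ near the centre, so the pointwise limit of $(1-s)I_2(s)$ needs a small extra cutoff or the explicit formula for $I_2$.
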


\begin{remark}
	The limit in Theorem~\ref{thm:AsymptoticsGeneralD} is asymptotic to $(2\pi d)^{\frac{d}{2}+O(1)}$ for $d\to \infty$. To see this, we apply the asymptotic relation
	$$
		\frac{\Gamma(x+\frac{1}{2})}{\Gamma(x)} = \sqrt{x}\Big(1-\frac{1}{8x}+O(x^{-2})\Big),\qquad x\to\infty,
	$$
	from \cite[Equation (5.11.13)]{NIST} first with $x=\frac{(d-1)^2}{2}$ and then with $x=\frac{d}{2}$. After simplifications, this yields
	$$
		\xi_{d-1}^{\operatorname{PD}}\Big(\frac{1}{d-1}\Big) = \frac{e^{-1/4}}{\pi\,d^{7/2}}\,(2\pi d)^{d/2}(1+O(d^{-1})) = (2\pi d)^{d/2-7/2+o(1)},\qquad d\to\infty.
	$$
\end{remark}

\begin{remark}
	Theorem~\ref{thm:AsymptoticsGeneralD} concerns the low-intensity regime and
	leads to new geometric phenomena. By contrast, the high-intensity regime is
	expected to recover the Euclidean case studied in
	\cite{CSY13,SY08}. This is the familiar local limit arising in the
	approximation of a smooth convex body near its boundary, and it is natural
	to expect the same behaviour for horoballs, whose boundary horospheres are
	totally umbilic hypersurfaces with constant normal curvature equal to one.

	More precisely, if one sets \(\gamma=\lambda^{d+1}\), then, as
	\(\lambda\to\infty\), the transformed downward-paraboloid process is expected
	to converge to the tessellation induced by the parabolic hull process. We do
	not pursue this limit here. However, at the level of cell intensities,
	Theorem~\ref{thm:CellIntensity} yields
	\[
		\lim_{\lambda\to\infty}
		\xi_{d-1}(\lambda^{d+1},\lambda)
		=
		c_d,
	\]
	where, for example, $c_2=(\frac{2}{3})^{1/3}\Gamma(\frac53)$ and $c_3=\frac{35}{24}$.
\end{remark}

\subsection{Convergence to the Poisson--Delaunay tessellation}

We note that the limit in Theorem~\ref{thm:AsymptoticsGeneralD} coincides with the cell intensity of the classical Poisson--Delaunay tessellation $\cD_{1/(d-1)}$ in $\RR^{d-1}$ of intensity $1/(d-1)$, see \cite[Equation (10.31)]{SW}. This serves as evidence that the whole tessellation $\cD_{\lambda^{d-1},\lambda}$ as $\lambda\downarrow 0$ might converge to $\cD_{1/(d-1)}$. We will next show that this is indeed the case and the above convergence holds on the level of skeletons and on the level of typical cells.

Denote by
\[
	\cS_{\lambda}:={\rm skel}(\cD_{\lambda^{d-1},\lambda})=\bigcup_{C\in \cD_{\lambda^{d-1},\lambda}} \partial C
\]
and by
\[
	\cS:={\rm skel}(\cD_{1/(d-1)})=\bigcup_{C\in \cD_{1/(d-1)}} \partial C,
\]
the skeletons of random tessellations $\cD_{\lambda^{d-1},\lambda}$ and $\cD_{1/(d-1)}$, respectively. Note that $\cS_{\lambda}$ and $\cS$ are random closed sets, namely measurable maps from the probability space to the space of closed subsets of $\RR^{d-1}$ equipped with Fell topology, see \cite{SW}.

\begin{theorem}\label{thm:SkeletonConvergence}
	As $\lambda\downarrow 0$ the distribution of $\cS_{\lambda}$ converges weakly to the distribution of the random closed set $\cS$. The above convergence holds in the stronger sense, namely given a sequence $(\lambda_n)_{n\in\NN}$, $\lambda_n\to 0$ we may define $((\cS_{\lambda_n})_{n\in\NN}, \cS)$ on the same probability space, such that for every $R>0$,
	\[
		\lim_{n\to\infty}\PP(\cS_{\lambda_n}\cap \BB_R^{d-1}\neq \cS\cap \BB_R^{d-1})=0.
	\]
\end{theorem}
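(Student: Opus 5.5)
Throughout, we work with the Laguerre representation of Theorem~\ref{thm:DlambdaIsTessellation}: $\cD_{\lambda^{d-1},\lambda}=\cL^*(\widetilde\eta_\lambda)$, where $\widetilde\eta_\lambda$ is Poisson on $\RR^{d-1}\times(0,\infty)$ with height density $f_\lambda(h)=\frac12\lambda^{d-1}h^{-(d+1)/2}\mathbf 1\{h\ge\lambda^2\}$.

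\textbf{Coupling.} We couple all $\widetilde\eta_{\lambda_n}$ with a single homogeneous Poisson process $\zeta$ of intensity $\frac1{d-1}$ on $\RR^{d-1}$. Since the total height mass $m_\lambda=\int f_\lambda=\frac1{d-1}$ does not depend on $\lambda$, the spatial projection of $\widetilde\eta_\lambda$ is exactly a homogeneous Poisson process of intensity $\frac1{d-1}$. Hence we may take a fixed $\zeta$ and i.i.d.\ uniform marks $U_{\bv}$, $\bv\in\zeta$, and set
\[
\widetilde\eta_\lambda=\{(\bv,\,G_\lambda^{-1}(U_{\bv})):\bv\in\zeta\},
\]
where $G_\lambda$ is the normalised height distribution function. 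Explicitly, $h=\lambda^2(1-U)^{-2/(d-1)}$. Consequently, for each point the height $h_\lambda(\bv)\to0$ monotonically and uniformly on the event $\{U_{\bv}\le 1-\delta\}$. The spatial coordinates never move, and the limiting tessellation $\cD_{1/(d-1)}$ is the Delaunay tessellation of $\zeta$. By the standard lifting to the paraboloid, this is exactly $\cL^*$ of the points $\zeta\times\{0\}$.

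\textbf{Local stability.} We fix $R$ and must show that, with probability tending to one, $\cL^*(\widetilde\eta_\lambda)$ and $\mathrm{Del}(\zeta)$ have the same cells meeting $\BB_R^{d-1}$. Two steps do this.

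\emph{(i) Deterministic stability.} Almost surely $\zeta$ is in general position, so every Delaunay empty ball through $d$ points of $\zeta$ has all other points of $\zeta$ at a positive margin outside. For the finitely many relevant cells this margin is bounded below by some $\varepsilon>0$ with high probability. Perturbing the heights by at most $\delta\ll\varepsilon$ changes each paraboloid $\Pi(\bz_1,\dots,\bz_d)$ continuously: the apex moves by $O(\delta)$, and the membership test $h< s-\|\bv-\bw\|^2$ of any other point changes by $O(\delta)$. So Delaunay simplices stay empty. Conversely, non-Delaunay $d$-tuples have some point strictly inside by margin $\varepsilon$, and they stay non-empty.

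\emph{(ii) Localization.} It suffices to check the condition for points within a large ball $\BB_{R'}^{d-1}$ and to require all heights there to be $\le\delta$. The first property holds because, with high probability, all Delaunay cells hitting $\BB_R^{d-1}$ have circumradius $\le\rho$. The second holds because the number of points in $\BB_{R'}^{d-1}$ is Poisson with finite mean, and each height exceeds $\delta$ with probability $\lambda^{d-1}\delta^{-(d-1)/2}\to0$. We also need that Laguerre cells hitting $\BB_R^{d-1}$ only use paraboloids with apex in $\BB_{R''}^{d-1}$ and bounded height $s$. This is a stabilization radius for the dual Laguerre tessellation, which we would obtain from the convergence criteria of \cite{GiWL26}; alternatively, we would bound it directly via void probabilities of paraboloids, since the heights are small and $s\le$ (local Delaunay circumradius)$^2+\delta$.

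On the intersection of these high-probability events the two skeletons coincide in $\BB_R^{d-1}$, which proves the second statement. Weak convergence in the Fell topology follows, since equality on every ball with probability tending to one implies convergence in probability of the hitting functionals $\PP(\cS_\lambda\cap K\neq\varnothing)$ for every compact $K$.

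\textbf{Main obstacle.} We expect the hardest part to be the uniform stabilization in (ii): excluding, with high probability, a Laguerre paraboloid with a distant apex but a large $s$ that reaches into $\BB_R^{d-1}$. Such paraboloids are generated by the rare high points, which the height cutoff does not control. We would handle this by noting that any paraboloid that is empty of $\widetilde\eta_\lambda$ must in particular avoid the points with height $\le\delta$. These points form a thinning of $\zeta$ with retention probability tending to one, so the paraboloid's base ball $\{\|\bv-\bw\|^2<s-\delta\}$ is empty of a Poisson process of intensity close to $\frac1{d-1}$. This bounds $s$ uniformly in $\lambda$ by the usual Delaunay empty-ball tail estimate.
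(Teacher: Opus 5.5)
Your proof is correct, but it takes a genuinely different route from the paper.

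\textbf{The paper's route.} It builds no coupling and no stability argument of its own. It shifts $\widetilde\eta_{\lambda_n}$ vertically by $-\lambda_n^2$, which leaves $\cL^*$ unchanged. It then checks that the shifted height densities satisfy $\int_0^x f_n(h)\,\dint h\to\frac{1}{d-1}$ for every $x>0$. Finally it invokes the general convergence results \cite[Theorems~3.6 and~3.8]{GiWL26}, which supply both the weak convergence and the local coupling.

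\textbf{Your route.} You exploit a special feature of this model. The total height mass is exactly $\frac1{d-1}$ for every $\lambda$, and the heights are $\lambda^2M_{\bv}$ with i.i.d.\ marks $M_{\bv}=(1-U_{\bv})^{-2/(d-1)}$. So one marked Poisson process realises every $\lambda$ at once. The problem then reduces to a deterministic, general-position perturbation of the lifted points $(\bv,\|\bv\|^2+\lambda^2M_{\bv})$ towards the standard paraboloid.

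\textbf{What each buys.} Your approach is self-contained and gives slightly more than stated: along any sequence $\lambda_n\downarrow0$ you get almost sure eventual equality of $\cS_{\lambda_n}\cap\BB_R^{d-1}$ and $\cS\cap\BB_R^{d-1}$, rather than equality with probability tending to one. The paper's approach is much shorter. It also applies to any family of height densities whose mass concentrates at zero, with no exact scaling structure needed.

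\textbf{Your ``main obstacle'' can be bypassed.} All heights are nonnegative, so points other than the $d$ vertices can only move further out in power distance. Hence a Delaunay simplex keeps an empty paraboloid as soon as its own $d$ vertices have heights small relative to its margin and shape, whatever the heights of the other points. Once every Delaunay simplex meeting $\BB_R^{d-1}$ is a cell of $\cL^*(\widetilde\eta_\lambda)$, no other cell can meet $\BB_R^{d-1}$. This is because these simplices cover a neighbourhood of $\BB_R^{d-1}$ and distinct cells of the tessellation have disjoint interiors. So neither the converse direction (non-Delaunay tuples stay non-empty) nor the uniform bound on $s$ for distant apices is needed. Your thinning argument for that bound is nonetheless valid.
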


\begin{remark}
The low-intensity scaling also identifies the relevant height scale.
Indeed, under $\gamma=\lambda^{d-1}$, the change of variables
$y=\lambda r$ transforms the intensity measure of
$\eta_{\lambda^{d-1},\lambda}$ into $\dint\bv\,r^{-d}\,\dint r$, $r\geq 1$.
Thus its spatial projection is a homogeneous Poisson point process
of intensity $1/(d-1)$, whose points carry independent marks $R$ with $\PP(R>r)=r^{-(d-1)}$, $r\geq 1$.
The hyperbolic distance of $(\bv,\lambda R)$ from the horosphere
$H_\lambda$ is $\log R$, and hence has an exponential distribution
with parameter $d-1$, independently of $\lambda$. In particular, for every bounded Borel set $W\subset\RR^{d-1}$
and every $L>0$,
\[
    \PP\bigl(
        \exists\,(\bv,y)\in\eta_{\lambda^{d-1},\lambda}:
        \bv\in W,\ \log(y/\lambda)>L
    \bigr)
    \leq
    \frac{\vol_{\RR^{d-1}}(W)}{d-1}\,e^{-(d-1)L}.
\]
Consequently, the hyperbolic heights of the input points with
spatial projection in $W$, and therefore also of the extreme
points among them, are bounded in probability, uniformly in
$\lambda$. In this sense, the relevant Euclidean height scale is $\lambda$. 
\end{remark}

Further we note that although the convergence on the level of skeletons has a ``global'' flavour, it is not sensitive enough to provide information about convergence of typical cells, since it does not recognize ``tiny'' cells, see the discussion in \cite{GiWL26}. On the other hand, together with convergence of cell intensities established in Theorem~\ref{thm:AsymptoticsGeneralD} it implies the weak convergence of typical cells as follows by \cite[Proposition 6.3]{GKT24}. Let $Z_{\lambda}$ be the typical cell of $\cD_{\lambda^{d-1},\lambda}$ and $Z^{\operatorname{PD}}_{1/(d-1)}$ be the typical cell of $\cD_{1/(d-1)}$ in the sense of Palm distributions, see \cite[Chapter 10]{SW}.

\begin{corollary}\label{cor:TypicalCellConvergence}
	As $\lambda\downarrow 0$ the distribution of the random simplex $Z_{\lambda}$ converges weakly to that of $Z^{\operatorname{PD}}_{1/(d-1)}$.
\end{corollary}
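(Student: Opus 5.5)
The proof will apply \cite[Proposition~6.3]{GKT24}. That result says the following: if a family of stationary random tessellations converges to a limit in the local-coupling sense of Theorem~\ref{thm:SkeletonConvergence}, and the cell intensities converge to the cell intensity of the limit, then the typical cells (in the Palm sense) converge in distribution. So the plan is to check the hypotheses of that proposition for $\cD_{\lambda^{d-1},\lambda}$ and $\cD_{1/(d-1)}$ along an arbitrary sequence $\lambda_n\downarrow0$. Weak convergence along every such sequence then gives the claim as $\lambda\downarrow0$.

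First, fix $\lambda_n\downarrow0$ and take the coupling from Theorem~\ref{thm:SkeletonConvergence}, under which
\[
\PP\bigl(\cS_{\lambda_n}\cap\BB_R^{d-1}\neq\cS\cap\BB_R^{d-1}\bigr)\to0\quad\text{for every }R>0.
\]
Both tessellations are stationary. By Theorem~\ref{thm:DlambdaIsTessellation} they are locally finite simplicial tessellations of $\RR^{d-1}$, and a tessellation is determined by its skeleton. So the local skeleton coupling is exactly the local convergence notion used in \cite{GKT24}. Second, Theorem~\ref{thm:AsymptoticsGeneralD} gives
\[
\xi_{d-1}(\lambda_n^{d-1},\lambda_n)\to\xi_{d-1}^{\operatorname{PD}}\bigl(\tfrac1{d-1}\bigr),
\]
and the right-hand side is the cell intensity of $\cD_{1/(d-1)}$ by \cite[Equation (10.31)]{SW}.

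For completeness, here is the mechanism I would reproduce if the proposition needs adapting. Choose the centre function to be the circumcentre or the apex $A(\cdot)$, which is translation covariant. For a bounded continuous function $g$ on centred simplices, the Palm representation reads
\[
\xi\,\EE g(Z)=\frac1{\vol(W)}\,\EE\sum_{C:\,c(C)\in W}g(C-c(C)).
\]
On the event that the skeletons agree in a ball $\BB_R^{d-1}\supset W$ with margin, the cells whose centres lie in $W$ and whose diameter is below the margin coincide. Fatou's lemma then yields
\[
\liminf_n \xi_n\EE g(Z_n)\ge \xi\,\EE g(Z)
\]
for nonnegative $g$. Applying this with $g=1$ shows that no mass is lost precisely because the intensities converge; in other words, the limit has no cells beyond those that are visible. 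Applying it to both $g$ and $\|g\|_\infty-g$, and dividing by the convergent intensities, gives convergence of $\EE g(Z_n)$.

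The main obstacle is the remark the paper makes before the statement. Skeleton convergence alone does not detect tiny or degenerate cells, and without control on them some cell mass of $\cD_{\lambda_n^{d-1},\lambda_n}$ could escape into cells that disappear in the limit. The intensity convergence from Theorem~\ref{thm:AsymptoticsGeneralD} is exactly what closes this gap, through the lower-bound-plus-total-mass argument above. I would not expect any further work beyond checking that the measurability and stationarity assumptions of \cite[Proposition~6.3]{GKT24} hold, and these follow from the Laguerre representation.
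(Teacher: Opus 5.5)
Your proposal is correct and is essentially the paper's argument: the paper also obtains the corollary by feeding the local skeleton coupling of Theorem~\ref{thm:SkeletonConvergence} and the intensity convergence of Theorem~\ref{thm:AsymptoticsGeneralD} into \cite[Proposition~6.3]{GKT24}. One small caveat for your fallback sketch: use a centre function that depends only on the cell, such as the circumcentre, because the apex $A(\cdot)$ of the Laguerre tessellation depends on the heights of the generating points, so cells that coincide under the coupling need not have the same apex.
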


Theorem~\ref{thm:SkeletonConvergence} implies convergence in probability of all local
statistics which are determined by the tessellation inside a fixed
deterministic window. More precisely, let $R>0$, and let $\Phi$ be a
real-valued functional on tessellations which is $R$-local in the sense
that
\[
	\Phi(\cT_1)=\Phi(\cT_2)
	\qquad\text{whenever}\qquad
	\skel(\cT_1)\cap\BB_R^{d-1}
	=
	\skel(\cT_2)\cap\BB_R^{d-1},
\]
where $\cT_1$ and $\cT_2$ are tessellations of $\RR^{d-1}$. Then,
under the coupling from Theorem~\ref{thm:SkeletonConvergence},
\[
	\PP\left(
	\Phi(\cD_{\lambda^{d-1},\lambda})
	\neq
	\Phi(\cD_{1/(d-1)})
	\right)
	\longrightarrow0,
	\qquad \lambda\downarrow0.
\]
Consequently,
$\Phi(\cD_{\lambda^{d-1},\lambda})$ converges in probability, and hence
also in distribution, to $\Phi(\cD_{1/(d-1)})$. If, in addition,
$\Phi$ is bounded, then the corresponding expectations converge as well.
This applies, for instance, to bounded local functionals of the adjacency
graph and to other statistics that are completely determined by the
tessellation in a fixed observation window. Similarly, the weak convergence of the typical cells in
Corollary~\ref{cor:TypicalCellConvergence} implies convergence of all continuous typical-cell functionals.  For example, the diameter and the volume of $Z_\lambda$ converge in
distribution to the corresponding functionals of the typical
Poisson--Delaunay cell $Z_{1/(d-1)}^{\operatorname{PD}}$. Convergence of the associated expectations, or of higher moments, again requires additional uniform integrability estimates.

On the other hand ,convergence of face intensities follows by combining
Theorem~\ref{thm:AsymptoticsGeneralD} with
Corollary~\ref{cor:TypicalCellConvergence}. The key observation
is that each face intensity equals the cell intensity multiplied
by an expected internal angle sum of the typical cell. Since
these angle sums are bounded and continuous on the space of
nondegenerate simplices, convergence of the typical cells implies
convergence of the corresponding expectations. We thus obtain that, for every
$k\in\{0,\ldots,d-1\}$, the $k$-face intensity of
$\cD_{\lambda^{d-1},\lambda}$ converges to the corresponding face
intensity of the limiting Poisson--Delaunay tessellation. To formally present the result, define
\begin{equation}\label{eqn:k-intensity}
	\xi_k(\gamma,\lambda)
	:=
	\frac{1}{\vol_{\RR^{d-1}}(W)}\;
	\EE
	\sum_{F\in\mathcal F_k(\cD_{\gamma,\lambda})}
	{\bf 1}\{\bz(F)\in W\},
\end{equation}
where $\mathcal F_k(\cD_{\gamma,\lambda})$ denotes the collection of
$k$-dimensional faces of $\cD_{\gamma,\lambda}$, and
$\bz(F)\in F$ is the barycentre of $F\in\mathcal F_k(\cD_{\gamma,\lambda})$, and $W\subset\RR^{d-1}$ is a bounded Borel set with $0<\vol_{\RR^{d-1}}(W)<\infty$. As in \eqref{eq:CellIntensity}, stationarity of $\cD_{\gamma,\lambda}$ implies that this definition does not depend on the choice of $W$. For $k=d-1$ it attributes cells to the window via their barycentre rather than via the apex used in \eqref{eq:CellIntensity}, but, again by stationarity, both conventions yield the same cell intensity $\xi_{d-1}(\gamma,\lambda)$. Analogously, we denote by
$\xi_k^{\operatorname{PD}}(\gamma)$ the $k$-face intensity of the
Poisson--Delaunay tessellation $\cD_\gamma$. The limiting face intensities admit an explicit representation in terms of the
typical Poisson--Delaunay simplex. Let
$Z_\gamma^{\operatorname{PD}}$ denote the typical full-dimensional cell of
$\cD_\gamma$, and let $\sigma_\ell(P)$ be the sum of the internal angles
of a simplex $P$ at all faces with $\ell$ vertices. We normalize $\sigma_\ell(P)$ in such a way that the angle of the whole normal space is equal to one, see \cite[Section 6.1]{GKT22} and \cite[Equation (7.2)]{MT14}. Then
\[
	\xi_k^{\operatorname{PD}}(\gamma)
	=
	\xi_{d-1}^{\operatorname{PD}}(\gamma)\,
	\EE\sigma_{k+1}\bigl(Z_\gamma^{\operatorname{PD}}\bigr),
	\qquad k\in\{0,\ldots,d-1\}.
\]
Moreover,
\[
	\xi_{d-1}^{\operatorname{PD}}(\gamma)
	=
	\frac{1}{\EE\vol_{\RR^{d-1}}\bigl(Z_\gamma^{\operatorname{PD}}\bigr)}\qquad\text{and}\qquad\EE\sigma_{k+1}\bigl(Z_\gamma^{\operatorname{PD}}\bigr)
	=
	\textup{j}_{d,k+1}\Big(-\frac{1}{2}\Big),
\]
where $\textup{j}_{d,k+1}(-\frac{1}{2})$ has the explicit one-dimensional integral
representation given in \cite[Equation~(6.1)]{GKT22}.
For example, in dimension $d=3$, where the limiting tessellation is a planar Poisson--Delaunay triangulation,
\[
	\xi_0^{\operatorname{PD}}(\gamma)=\gamma,\qquad
	\xi_1^{\operatorname{PD}}(\gamma)=3\gamma,\qquad
	\xi_2^{\operatorname{PD}}(\gamma)=2\gamma.
\]

\begin{corollary}\label{cor:AllFaceIntensities}
	For every $k\in\{0,\ldots,d-1\}$,
	\[
		\xi_k(\lambda^{d-1},\lambda)
		\longrightarrow
		\xi_k^{\operatorname{PD}}\Big(\frac{1}{d-1}\Big),\qquad \lambda\downarrow 0.
	\]
\end{corollary}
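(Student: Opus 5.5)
The proof follows the strategy sketched just before the statement: write every face intensity as the cell intensity times an expected angle sum of the typical cell, and pass to the limit in both factors. The first step is a mean-value identity for the stationary simplicial tessellation $\cD_{\gamma,\lambda}$, in the spirit of \cite[Section~6.1]{GKT22} and \cite[Equation~(7.2)]{MT14}:
\[
	\xi_k(\gamma,\lambda)=\xi_{d-1}(\gamma,\lambda)\,\EE\sigma_{k+1}(Z_{\gamma,\lambda}),\qquad k\in\{0,\ldots,d-1\},
\]
where $Z_{\gamma,\lambda}$ is the typical cell.

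To prove this identity, assign to each cell $C$ and each $k$-face $F\subset C$ the internal angle $\beta(F,C)$, normalised so that the whole normal space has angle one. Since $\cD_{\gamma,\lambda}$ is face-to-face and simplicial, it is a.s.\ normal in the sense of \cite[Chapter~10]{SW}, which is guaranteed by the Laguerre construction in \cite{GiWL}. Hence for every $k$-face $F$ the internal angles of the cells containing $F$ add up to one:
\[
	\sum_{C\supset F}\beta(F,C)=1.
\]
Consequently the counting measure of $k$-face barycentres equals $\sum_C\sum_{F\subset C}\beta(F,C)\,\delta_{\bz(F)}$. Taking expectations, applying stationarity and the Campbell--Mecke (refined Campbell) formula for the cell process, and noting that shifting the reference point from $\bz(F)$ to the cell's centre function does not change the intensity, yields the identity. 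Local finiteness and integrability of this sum follow because a simplex has at most $\binom{d}{k+1}$ faces and $\xi_{d-1}(\gamma,\lambda)<\infty$.

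With the identity in hand, set $\gamma=\lambda^{d-1}$. Theorem~\ref{thm:AsymptoticsGeneralD} gives $\xi_{d-1}(\lambda^{d-1},\lambda)\to\xi_{d-1}^{\operatorname{PD}}(\frac1{d-1})$. For the second factor, the map $P\mapsto\sigma_{k+1}(P)$ is continuous on the space of nondegenerate simplices in the Hausdorff metric and bounded by $\binom{d}{k+1}$. The limit law $Z^{\operatorname{PD}}_{1/(d-1)}$ is a.s.\ nondegenerate, so the set of degenerate simplices, where continuity of $\sigma_{k+1}$ may fail, is null for the limit. Corollary~\ref{cor:TypicalCellConvergence} and the continuous mapping theorem then give weak convergence of $\sigma_{k+1}(Z_\lambda)$, and bounded convergence yields $\EE\sigma_{k+1}(Z_\lambda)\to\EE\sigma_{k+1}(Z^{\operatorname{PD}}_{1/(d-1)})$. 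Multiplying the two limits and applying the same identity for the Poisson--Delaunay tessellation, stated above, completes the proof.

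The main obstacle is making the first step rigorous. One must check that $\cD_{\gamma,\lambda}$ is a.s.\ face-to-face with internal angles around every face summing to one, which needs care in lower-dimensional faces but follows from the tessellation property in \cite{GiWL}. One must also check that the Palm/typical-cell formalism of \cite[Chapter~10]{SW} applies with the centre function used in Corollary~\ref{cor:TypicalCellConvergence}. The continuity issue at degenerate simplices is minor, since the limit puts no mass there.
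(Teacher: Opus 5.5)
You follow the paper's proof: the identity $\xi_k=\xi_{d-1}\,\EE\sigma_{k+1}(Z)$ (the paper quotes it from \cite[Theorem~10.1.3]{SW} rather than rederiving it via internal angles and the refined Campbell theorem), the bound $0\le\sigma_{k+1}\le\binom{d}{k+1}$, continuity of $\sigma_{k+1}$ on nondegenerate simplices, Corollary~\ref{cor:TypicalCellConvergence} with the continuous mapping theorem, and Theorem~\ref{thm:AsymptoticsGeneralD}. One correction to your derivation of the identity: $\cD_{\gamma,\lambda}$ is not normal for $d\ge3$ (in a stationary planar triangulation a vertex lies in six cells on average, whereas normality would require exactly three). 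You do not need normality, however, since the internal angles of the cells around a $k$-face sum to one in any face-to-face tessellation.
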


\begin{remark}\label{rmk:k-intensity_normalized}
For $k\in\{0,\ldots,d-1\}$, the intensity
$\xi_k(\gamma,\lambda)$ is measured per unit Euclidean volume
in the spatial coordinates. The hyperbolic isometry
$(\bv,y)\mapsto(\bv/\lambda,y/\lambda)$ maps
$K_{\gamma,\lambda}$ in distribution onto $K_{\gamma,1}$.
Consequently,
\[
    \xi_k(\gamma,\lambda)
    =\lambda^{-(d-1)}\xi_k(\gamma,1).
\]
Since
\[
    \vol_{\HH^d}(C_\lambda(W))
    =\frac{\vol_{\RR^{d-1}}(W)}{(d-1)\lambda^{d-1}},
\]
we may equivalently normalize the expected projected face count
by the hyperbolic volume of the observation cylinder and define
\[
    \tilde{\xi}_k(\gamma)
    :=(d-1)\lambda^{d-1}\xi_k(\gamma,\lambda)
    =(d-1)\xi_k(\gamma,1).
\]
This quantity is independent of $\lambda$ and of the window $W$.
It is a normalization of projected face counts by cylinder
volume; it does not assert that a point process of face centres
in $\HH^d$ has intensity measure proportional to hyperbolic
volume.
\end{remark}

\subsection{The local volume and surface area}

\begin{figure}[t]
	\centering
	\begin{tikzpicture}[
			scale=1.25,
			>=stealth,
			line cap=round,
			line join=round
		]


		\def\lam{0.75}      
		\def\top{4.30}

		\def\xa{-1.60}
		\def\xb{ 1.60}

		\def\cA{-2.00}
		\def\rA{ 2.00}

		\def\cB{ 0.00}
		\def\rB{ 1.70}

		\def\cC{ 2.00}
		\def\rC{ 2.00}


		\pgfmathsetmacro{\xAB}{
			(\rB*\rB-\rA*\rA+\cA*\cA-\cB*\cB)/(2*(\cA-\cB))
		}

		\pgfmathsetmacro{\xBC}{
			(\rC*\rC-\rB*\rB+\cB*\cB-\cC*\cC)/(2*(\cB-\cC))
		}


		\pgfmathsetmacro{\xAL}{
			\cA-sqrt(\rA*\rA-\lam*\lam)
		}
		\pgfmathsetmacro{\xAR}{
			\cA+sqrt(\rA*\rA-\lam*\lam)
		}

		\pgfmathsetmacro{\xBL}{
			\cB-sqrt(\rB*\rB-\lam*\lam)
		}
		\pgfmathsetmacro{\xBR}{
			\cB+sqrt(\rB*\rB-\lam*\lam)
		}

		\pgfmathsetmacro{\xCL}{
			\cC-sqrt(\rC*\rC-\lam*\lam)
		}
		\pgfmathsetmacro{\xCR}{
			\cC+sqrt(\rC*\rC-\lam*\lam)
		}


		\pgfmathsetmacro{\yAB}{
			sqrt(\rA*\rA-(\xAB-\cA)^2)
		}
		\pgfmathsetmacro{\yBC}{
			sqrt(\rB*\rB-(\xBC-\cB)^2)
		}


		\draw (-3.55,0) -- (4.05,0);


		\fill[blue!2]
		(-3.35,\lam) rectangle (3.85,\top);

		\fill[blue!7]
		(\xa,\lam) rectangle (\xb,\top);

		%

		\begin{scope}
			\clip
			(\xa,\lam) rectangle (\xb,\top);

			\fill[blue!22]
			(\xAL,\lam)
			--
			plot[
					domain=\xAL:\xAR,
					samples=140
				]
			(\x,{sqrt(\rA*\rA-(\x-\cA)^2)})
			--
			(\xAR,\lam)
			-- cycle;

			\fill[blue!22]
			(\xBL,\lam)
			--
			plot[
					domain=\xBL:\xBR,
					samples=140
				]
			(\x,{sqrt(\rB*\rB-(\x-\cB)^2)})
			--
			(\xBR,\lam)
			-- cycle;

			\fill[blue!22]
			(\xCL,\lam)
			--
			plot[
					domain=\xCL:\xCR,
					samples=140
				]
			(\x,{sqrt(\rC*\rC-(\x-\cC)^2)})
			--
			(\xCR,\lam)
			-- cycle;
		\end{scope}


		\draw[blue!32]
		plot[
				domain=\xAL:\xAR,
				samples=140
			]
		(\x,{sqrt(\rA*\rA-(\x-\cA)^2)});

		\draw[blue!32]
		plot[
				domain=\xBL:\xBR,
				samples=140
			]
		(\x,{sqrt(\rB*\rB-(\x-\cB)^2)});

		\draw[blue!32]
		plot[
				domain=\xCL:\xCR,
				samples=140
			]
		(\x,{sqrt(\rC*\rC-(\x-\cC)^2)});

		%

		\draw[blue,very thick]
		plot[
				domain=\xa:\xAB,
				samples=80
			]
		(\x,{sqrt(\rA*\rA-(\x-\cA)^2)});

		\draw[blue,very thick]
		plot[
				domain=\xAB:\xBC,
				samples=100
			]
		(\x,{sqrt(\rB*\rB-(\x-\cB)^2)});

		\draw[blue,very thick]
		plot[
				domain=\xBC:\xb,
				samples=80
			]
		(\x,{sqrt(\rC*\rC-(\x-\cC)^2)});


		\draw[blue,dashed,thick]
		(-3.35,\lam) -- (3.85,\lam);

		\node[
			black,
			left
		]
		at (-3.35,\lam)
		{$H_\lambda$};

		\node[
			black,
			left
		]
		at (-3.35,\top-0.2)
		{$B_\lambda^\infty$};

		\draw[gray!65,densely dashed]
		(\xa,\lam) -- (\xa,\top+0.3);

		\draw[gray!65,densely dashed]
		(\xb,\lam) -- (\xb,\top+0.3);

		\draw[
			decorate,
			decoration={
					brace,
					mirror,
					amplitude=5pt
				}
		]
		(\xa,0) -- (\xb,0)
		node[midway,below=7pt] {$W$};



		\node[black]
		at (0.00,4.62)
		{$C_\lambda(W)$};

		\node[
			blue!75!black,
			align=center
		]
		at (0,3.28)
		{$K_{\gamma,\lambda}\cap C_\lambda(W)$};


	\end{tikzpicture}
	\caption{Local missed volume (blue area) and surface area (blue boundary curve) of $K_{\gamma,\lambda}$ in the horospherical cylinder $C_\lambda(W)$ above $W\subset\RR^{d-1}$.}
	\label{fig:MissedVolume}
\end{figure}

Another classical functional in the Euclidean theory of random polytopes is
the volume of the random polytope. If $K\subset\RR^d$ is a convex body and
$K_n$ is the convex hull of $n$ random points in $K$, one studies
$\vol_{\RR^d}(K_n)$, or equivalently the missed volume
$\vol_{\RR^d}(K\setminus K_n)$. Closely related is the surface area of the boundary of $K_n$. Recently, hyperbolic volumes of Euclidean
beta-polytopes, viewed in the appropriate model of hyperbolic space,
have been studied in \cite{KS2026}.
In the present horoball model the global volume and surface area of
$K_{\gamma,\lambda}$ are not meaningful, since
$K_{\gamma,\lambda}$ is unbounded. However, its one-ended structure
suggests a natural localization in the horospherical direction.

Let $W\subset\RR^{d-1}$ be a bounded Borel set with
$0<\vol_{\RR^{d-1}}(W)<\infty$ and define the
horospherical cylinder $C_\lambda(W):= \{(\bv,y)\in\HH^d:\bv\in W,\ y\ge\lambda\}=W\times[\lambda,\infty)$
together with its horospherical base
$W_\lambda	:= \partial B_\lambda^\infty\cap C_\lambda(W) =W\times\{\lambda\}$.
We recall the definition \eqref{def:local_vol} of the local volume and surface-area ratios $V_{\gamma,\lambda}(W)$ and $S_{\gamma,\lambda}(W)$, also see Figure~\ref{fig:MissedVolume}. Let us note that
$\frac{1}{\lambda} C_{\lambda}(W) = C_{1}\bigl(\frac{1}{\lambda}W\bigr)$ and
$\frac{1}{\lambda} \bigl(W_{\lambda}\bigr) = \bigl(\frac{1}{\lambda}W\bigr)_{1}$,
which yields
\begin{equation*}
	V_{\gamma,\lambda}(W) \overset{d}{=} V_{\gamma,1}\bigl(\tfrac{1}{\lambda}W\bigr) \qquad \text{and}\qquad
	S_{\gamma,\lambda}(W) \overset{d}{=} S_{\gamma,1}\bigl(\tfrac{1}{\lambda}W\bigr).
\end{equation*}

The random variable $V_{\gamma,\lambda}(W)$, which takes values in
$[0,1]$, is the natural local analogue of the volume-ratio functional for
bounded random polytopes in Euclidean spaces. The corresponding normalised
local missed volume is
\[
	\frac{\vol_{\HH^d}\bigl(
		(B_\lambda^\infty\setminus K_{\gamma,\lambda})
		\cap C_\lambda(W)
		\bigr)}
	{\vol_{\HH^d}\bigl(C_\lambda(W)\bigr)}
	=
	1-V_{\gamma,\lambda}(W).
\]
The functional $S_{\gamma,\lambda}(W)$ measures the amount of
hyperbolic boundary area of the random polyhedron per unit hyperbolic
surface area of the horospherical base $W_\lambda$ of the observation
cylinder.

We now give exact formulas for the expectations of
$V_{\gamma,\lambda}(W)$ and $S_{\gamma,\lambda}(W)$. These are the
volume and surface-area analogues of the cell-intensity formula in
Theorem~\ref{thm:CellIntensity}. To state both formulas simultaneously, put for $\ell\in\{0,1\}$,
\[
	F_{\gamma,\lambda}^{(\ell)}(W)
	:=
	\begin{cases}
		V_{\gamma,\lambda}(W), & \ell=0, \\
		S_{\gamma,\lambda}(W), & \ell=1,
	\end{cases}
\]
and define, for $s\in(0,1)$,
\begin{align*}
	J_{d,\ell}(s)
	:=
	\int_{(\RR^{d-1})^d}
	 & \Delta_{d-1}(\bz_1,\ldots,\bz_d)
	\left(
	\int_{[\bz_1,\ldots,\bz_d]}
		(1-\|\bu\|^2)^{-\frac{d-1+\ell}{2}}
	\,\dint\bu
	\right)                             \\
	 & \times\prod_{i=1}^d
	(1-\|\bz_i\|^2)^{-\frac{d+1}{2}}
		{\bf 1}\{\|\bz_i\|^2\le s\}\dint\bz_i.
\end{align*}
For $\ell=0$, the integral in $J_{d,0}(s)$ represents, up to the
factor $d-1$, the hyperbolic volume above the facet, towards $\infty$. For $\ell=1$, the inner integral in $J_{d,1}(s)$ is the
intrinsic hyperbolic $(d-1)$-volume of the corresponding facet in its
supporting totally geodesic hypersurface.

\begin{theorem}\label{thm:LocalVolume}
	Let $W\subset\RR^{d-1}$ be a bounded Borel set with
	$0<\vol_{\RR^{d-1}}(W)<\infty$. Then, for
	$\gamma,\lambda>0$ and $\ell\in\{0,1\}$,
	\[
		\EE F_{\gamma}^{(\ell)}:=\EE F_{\gamma,\lambda}^{(\ell)}(W)
		=
		\frac{\gamma^d}{2d}
		\int_0^1
		(1-s)^{\frac{d-3}{2}}
		\exp\!\Big(
		-\gamma\frac{\kappa_{d-1}}{d+1}
		s^{\frac{d+1}{2}}
		{}_2F_1\!\Big(
		\tfrac{d+1}{2},\tfrac{d+1}{2};
		\tfrac{d+3}{2};s
		\Big)
		\Big)
		J_{d,\ell}(s)\,\dint s.
	\]
	In particular, both expectations $\EE V_{\gamma}=\EE F_{\gamma}^{(0)}=\EE V_{\gamma,\lambda}(W)$ and $\EE S_{\gamma}=\EE F_{\gamma}^{(1)}=\EE S_{\gamma,\lambda}(W)$ are independent of $W$ and
	$\lambda$.
\end{theorem}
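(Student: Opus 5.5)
The plan is to decompose the local volume (respectively surface area) over the cells of the projected tessellation $\cD_{\gamma,\lambda}$, and then evaluate the resulting sum with the Mecke formula. The evaluation reuses exactly the computation behind Theorem~\ref{thm:CellIntensity}, with one additional weight per cell.

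\textbf{Step 1 (cell decomposition).} By Theorem~\ref{thm:UnboundedOneEnded}, a.s.\ every vertical geodesic above a point $\bv\in\RR^{d-1}$ meets $\partial K_{\gamma,\lambda}$ exactly once. The point of intersection lies on the facet supported by the empty hemisphere $\partial B(\bv_1,y_1,\ldots,\bv_d,y_d)$ whose projection $[\bv_1,\ldots,\bv_d]$ contains $\bv$ (Theorem~\ref{thm:DlambdaIsTessellation}). Above that facet, $K_{\gamma,\lambda}$ coincides with the region lying over the hemisphere; below it lies the missed part. Hence, up to boundary effects,
\[
\vol_{\HH^d}(K_{\gamma,\lambda}\cap C_\lambda(W))=\sum_{C\in\cD_{\gamma,\lambda}}\int_{C\cap W}\int_{r_C(\bu)}^\infty y^{-d}\,\dint y\,\dint \bu
=\frac1{d-1}\sum_{C}\int_{C\cap W}r_C(\bu)^{-(d-1)}\dint\bu ,
\]
where $r_C(\bu)$ is the height of the hemisphere above $\bu$. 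Similarly, the surface area is a sum of $\int_{C\cap W}$ of the hyperbolic area element of the hemisphere.

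To avoid boundary effects I would use stationarity. The map $W\mapsto\EE[\cdot]$ is a translation-invariant locally finite measure, so I can replace the restriction to $W$ by the convention of counting each full cell through its apex $A(\bz_1,\ldots,\bz_d)\in W$, as in \eqref{eq:CellIntensity}. Then
\[
\EE F^{(\ell)}_{\gamma,\lambda}(W)=\frac{c_\ell\lambda^{d-1}}{\vol_{\RR^{d-1}}(W)}\,\EE\sum_{\text{cells}}\mathbf 1\{A\in W\}\,G_\ell(\text{cell}),
\]
with $c_0=(d-1)$ and $c_1=1$, using \eqref{eqn:VolumeCylinder}. Here $G_\ell$ is the full per-cell volume or area weight.

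\textbf{Step 2 (Mecke formula and paraboloid Blaschke--Petkantschin).} Apply the multivariate Mecke formula to $\widetilde\eta_{\gamma,\lambda}$, exactly as in the proof of Theorem~\ref{thm:CellIntensity}. Parametrise the $d$-tuple by the apex $(\bw,t)$ of the paraboloid, equivalently by the hemisphere of Euclidean radius $\rho$ centred at $(\bw,0)$, together with normalised positions $\bz_i\in\BB^{d-1}$ via $\bv_i=\bw+\rho\bz_i$. The constraint $y_i\ge\lambda$ becomes $\|\bz_i\|^2\le 1-\lambda^2/\rho^2=:s$. The Jacobian of this Blaschke--Petkantschin transformation contributes $\Delta_{d-1}(\bz_1,\ldots,\bz_d)$. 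The density $h^{-(d+1)/2}$ contributes $\prod(1-\|\bz_i\|^2)^{-(d+1)/2}$ after rescaling.

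The void probability is $\exp(-\Lambda_{\gamma,\lambda}(\operatorname{int}\Pi))$. This equals the hyperbolic volume of the halfball intersected with $B^\infty_\lambda$, which depends only on $s$ and produces the ${}_2F_1$ term. These are precisely the factors in Theorem~\ref{thm:CellIntensity}, which I take as already computed.

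The only new ingredient is the weight $G_\ell$. Writing $\bu=\bw+\rho\bu'$, the hemisphere height is $r=\rho\sqrt{1-\|\bu'\|^2}$, so
\[
\int_{[\bv_i]}r^{-(d-1)}\dint\bu=\int_{[\bz_i]}(1-\|\bu'\|^2)^{-\frac{d-1}{2}}\dint\bu'.
\]
This uses that $\rho^{d-1}$ from $\dint\bu$ cancels $\rho^{-(d-1)}$. For the area, the hyperbolic area element of the hemisphere over $\bu$ is $r^{-(d-1)}\cdot\rho/r\,\dint\bu$, which gives exponent $-\tfrac d2$, matching $\ell=1$.

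\textbf{Step 3 (scaling).} Compared with the cell-intensity computation, this extra factor is $\rho$-free. The remaining $\rho$-integral is then identical to that of Theorem~\ref{thm:CellIntensity} and produced the factor $\lambda^{1-d}$ there. This factor is cancelled by the prefactor $\lambda^{d-1}$, giving independence of $\lambda$, while $W$ drops out by stationarity. The constant $c_\ell$ combines with the $\frac1{d-1}$ to give the common prefactor $\frac{\gamma^d}{2d}$. This yields the formula with $J_{d,\ell}(s)$.

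\textbf{Main obstacle.} The main difficulty I expect is Step 1 made rigorous. One must show that the upper boundary of $K_{\gamma,\lambda}$ over each projected cell is exactly the corresponding hemisphere piece, with no part of $B_\lambda^\infty$ above a facet missed, and that the apex-based attribution legitimately replaces the cutoff at $\partial W$. I would handle the attribution by the standard measure argument and a monotone limit over growing windows. Matching the constants exactly, in particular the factor $d-1$ for $\ell=0$, also needs care.
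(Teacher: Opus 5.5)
Your proposal is correct and follows essentially the same route as the paper. The shared steps are: the per-cell weight $(d-1)^{\ell-1}\int t^\ell(t^2-\|\bv-\bw\|^2)^{-(d-1+\ell)/2}\,\dint\bv$, the Mecke formula combined with the paraboloid Blaschke--Petkantschin map $G$, the cancellation of the powers of $t$ after substituting $\bv=\bw+t\bu$, and the same void probability and substitution $s=1-\lambda^2/t^2$, with the normalisation supplying a factor $\lambda^{d-1}$ that cancels $\lambda^{1-d}$.

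The window issue you flag as the main obstacle needs no apex attribution or growing-window limit. The paper keeps $\mathbf 1\{\bw+t\bu\in W\}$ inside the Mecke integral. All other factors are $\bw$-free by stationarity, so integrating over $\bw$ first yields exactly $\vol_{\RR^{d-1}}(W)$.
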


The Efron identity for random polytopes in Euclidean space connects the expected vertex number with the expected volume, see \cite[Equation (2.1)]{Reitzner2010} or \cite[Equation (8.12)]{SW} for the classical and \cite[Equation (3.1)]{HHRT2015} or \cite[Equation (15)]{BR:2015} for a Poissonised version. In the same spirit, for our model of random hyperbolic polyhedra we can derive a relationship between the vertex intensity $\xi_0(\gamma,\lambda)$ of $\cD_{\gamma,\lambda}$ and the expected local missed volume.

\begin{corollary}\label{cor:Efron}
	For the vertex intensity we have
	\[
		\lambda^{d-1}\, \xi_0(\gamma,\lambda)
		=\frac{\gamma}{(d-1)}\,
		\bigl(1- \EE V_{\gamma}\bigr).
	\]
\end{corollary}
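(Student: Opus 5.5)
We follow the classical Poissonised Efron argument (Mecke formula plus the identity ``a point is a vertex iff it lies outside the hull of the others''). The vertices of $\cD_{\gamma,\lambda}$ are exactly the projections $\bv$ of points $(\bv,y)\in\eta_{\gamma,\lambda}$ that are extreme points of $K_{\gamma,\lambda}$. Indeed, by Theorem~\ref{thm:DlambdaIsTessellation} and the Laguerre construction, the $0$-faces of $\cL^*(\widetilde\eta_{\gamma,\lambda})$ are the spatial coordinates of the points of $\widetilde\eta_{\gamma,\lambda}$ lying on some empty paraboloid, i.e.\ the images of points of $\eta_{\gamma,\lambda}$ lying on the boundary of some empty halfball. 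These are precisely the points that are not in $\conv(\eta_{\gamma,\lambda}\setminus\{x\})$, since hyperbolic halfspaces avoiding $\infty$ are the Euclidean halfballs. Almost surely there are no coincidences, so
\[
\xi_0(\gamma,\lambda)\vol_{\RR^{d-1}}(W)=\EE\sum_{x=(\bv,y)\in\eta_{\gamma,\lambda}}{\bf 1}\{\bv\in W\}\,{\bf 1}\{x\notin\conv(\eta_{\gamma,\lambda}\setminus\{x\})\}.
\]
The one subtle point is that $x\notin\conv(\eta\setminus\{x\})$ must be equivalent to $x$ lying on an empty supporting hemisphere. This requires a closed-hull/separation argument, and here I would use Theorem~\ref{thm:UnboundedOneEnded} (local finiteness and one-endedness) to exclude vertical supporting hyperplanes and limit phenomena.

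Next we apply the Mecke formula to obtain
\[
\xi_0(\gamma,\lambda)\vol_{\RR^{d-1}}(W)=\gamma\int_{C_\lambda(W)}\PP\bigl(x\notin K_{\gamma,\lambda}\bigr)\,\dint\!\vol_{\HH^d}(x).
\]
Here the event concerns the hull of $\eta_{\gamma,\lambda}$ itself, because adding $x$ to the process does not change the hull of the remaining points. By Fubini the integral equals $\EE\vol_{\HH^d}\bigl((B_\lambda^\infty\setminus K_{\gamma,\lambda})\cap C_\lambda(W)\bigr)=\vol_{\HH^d}(C_\lambda(W))\,(1-\EE V_\gamma)$, where we use Theorem~\ref{thm:LocalVolume} for the independence of $W$ and $\lambda$. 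Boundary points of $K_{\gamma,\lambda}$ have measure zero, so the possible non-closedness of the hull is irrelevant.

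Finally we insert $\vol_{\HH^d}(C_\lambda(W))=\vol_{\RR^{d-1}}(W)/((d-1)\lambda^{d-1})$ from \eqref{eqn:VolumeCylinder}, divide by $\vol_{\RR^{d-1}}(W)$, and multiply by $\lambda^{d-1}$, which gives $\lambda^{d-1}\xi_0(\gamma,\lambda)=\frac{\gamma}{d-1}(1-\EE V_\gamma)$.

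The main obstacle is the first step: rigorously identifying the vertices of $\cD_{\gamma,\lambda}$ with the extreme points of the unbounded hull, including the fact that every extreme point actually appears as a vertex of some projected cell. All remaining steps are routine.
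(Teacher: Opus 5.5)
Your proposal is correct and follows the paper's proof essentially step by step. You identify the vertices of $\cD_{\gamma,\lambda}$ with the extreme points of $K_{\gamma,\lambda}$ via general position, apply the Mecke equation to obtain $\gamma\int_{C_\lambda(W)}\PP(x\notin K_{\gamma,\lambda})\,\vol_{\HH^d}(\dint x)$, use Tonelli to turn this into the expected local missed volume, and normalise with \eqref{eqn:VolumeCylinder}. The paper justifies the vertex/extreme-point identification just as briefly as you do, appealing only to general position; the closedness and boundary description from the proof of Theorem~\ref{thm:UnboundedOneEnded} are available for the converse direction. So the step you flag as the main obstacle is not a gap relative to the paper's argument.
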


We give a direct proof of Corollary~\ref{cor:Efron} via the Mecke equation below, but we remark that it can also be derived from results in \cite{LM:2026} on Efron-type identities for stopping sets of Poisson processes, see \cite[Corollary~4.3]{LM:2026} and the treatment of convex hulls of infinite Poisson processes in \cite[Section 7]{LM:2026}. In particular, higher moment identities can be obtained immediately using their framework.

\begin{remark}
	In view of Remark~\ref{rmk:k-intensity_normalized}, Corollary~\ref{cor:Efron} can equivalently be written as $\tilde{\xi}_0(\gamma) = \gamma (1-\EE V_{\gamma})$.
\end{remark}

The asymptotic behaviour for $\gamma\downarrow 0$
of the local volume and surface area can further be analysed by understanding the limiting behaviour of $J_{d,\ell}$ for $s\uparrow 1$.
For this purpose, for ideal points $\bu_1,\dotsc,\bu_d \in \SS^{d-2} \subset \RR^{d-1}$, we consider the ideal hyperbolic simplex $S^{\infty}(\bu_1,\dotsc,\bu_d)$ spanned by these vertices and $\infty$ and denote by $F^\infty(\bu_1,\dotsc,\bu_d)$ the facet of $S^\infty(\bu_1,\dotsc,\bu_d)$ opposite the ideal vertex $\infty$.
For $\ell\in\{0,1\}$, put
\begin{equation}\label{eq:Gdl}
	G_{d,\ell}(\bu_1,\ldots,\bu_d)
	:=
	\begin{cases}
		\vol_{\HH^d}\bigl(
		S^\infty(\bu_1,\ldots,\bu_d)
		\bigr), & \ell=0, \\[1mm]
		\cH_{\HH^d}^{d-1}\bigl(
		F^\infty(\bu_1,\ldots,\bu_d)
		\bigr), & \ell=1,
	\end{cases}
\end{equation}
and define
\[
	C_{d,\ell}
	:=
	\frac{1}{(d-1)^{d-1+\ell}}
	\int_{(\SS^{d-2})^d}
	\Delta_{d-1}(\bu_1,\ldots,\bu_d)\,
	G_{d,\ell}(\bu_1,\ldots,\bu_d)\,
	\sigma(\dint\bu_1)\cdots\sigma(\dint\bu_d).
\]
Since \(\SS^0=\{-1,1\}\), the definition of \(C_{d,\ell}\) immediately
gives
\begin{equation}\label{eq:C20}
	C_{2,0}=4\pi.
\end{equation}
The constant $C_{d,0}$ is finite for every $d\ge2$, whereas
$C_{d,1}$ is finite for $d\ge3$. Indeed, in the latter case
$F^\infty(\bu_1,\ldots,\bu_d)$ is an ideal simplex in
$\HH^{d-1}$ of finite hyperbolic volume. Dimension $d=2$ is
exceptional, since the limiting facet is an ideal geodesic segment and has
infinite hyperbolic length.

To analyse the asymptotic behaviour of the local volume and surface area for $\gamma\downarrow 0$
we use the same
cusp-scaling method as in the proof of
Theorem~\ref{thm:AsymptoticsGeneralD}. Moreover, the Efron-type identity
yields a second-order asymptotic expansion for the vertex intensity, thus
refining Corollary~\ref{cor:AllFaceIntensities} for $k=0$.

\begin{corollary}\label{cor:LocalVolumeAsymptotics}
	Let $\ell\in\{0,1\}$ and assume $d\geq 2$ if $\ell=0$ and $d\geq 3$ if $\ell=1$. Then, as $\gamma\downarrow0$,
	\[
		\EE F^{(\ell)}_{\gamma}
		=
		C_{d,\ell}\,
		\frac{(d-2)!(d-1)^{d-2}}{d\,\kappa_{d-1}^{d-1}}\,
		\gamma
		+
		o(\gamma).
	\]
	In dimension $d=2$, the expected normalised local boundary length satisfies $\EE S_{\gamma}=2\gamma\log\frac{1}{\gamma}+O(\gamma)$.
	In particular,
	\[
		\xi_0(\lambda^{d-1},\lambda)
		=
		\frac{1}{d-1}
		-
		C_{d,0}\,
		\frac{(d-2)!(d-1)^{d-3}}{d\,\kappa_{d-1}^{d-1}}\,
		\lambda^{d-1}
		+
		o(\lambda^{d-1}).
	\]
\end{corollary}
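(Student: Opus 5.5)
We start from the exact representation of Theorem~\ref{thm:LocalVolume} and analyse it as $\gamma\downarrow0$ by the same cusp scaling used for Theorem~\ref{thm:AsymptoticsGeneralD}. Write
\[
\EE F^{(\ell)}_\gamma=\frac{\gamma^d}{2d}\int_0^1(1-s)^{\frac{d-3}{2}}e^{-\gamma\Phi(s)}J_{d,\ell}(s)\,\dint s,\qquad \Phi(s)=\tfrac{\kappa_{d-1}}{d+1}s^{\frac{d+1}{2}}{}_2F_1\big(\tfrac{d+1}{2},\tfrac{d+1}{2};\tfrac{d+3}{2};s\big).
\]
For small $\gamma$ the mass sits near $s\uparrow1$. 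There ${}_2F_1(a,a;a+1;s)$ with $a=\frac{d+1}{2}$ behaves like $c\,(1-s)^{-(d-1)/2}$, since $c-a-b=-\frac{d-1}{2}$. One checks that this gives $\Phi(s)\sim\frac{\kappa_{d-1}}{d-1}(1-s)^{-\frac{d-1}{2}}$, so it is natural to substitute $t=\gamma(1-s)^{-\frac{d-1}{2}}$.

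\textbf{Asymptotics of $J_{d,\ell}$ as $s\uparrow1$.} Set $\bz_i=\sqrt{s}\,\bx_i$ and use polar coordinates $\bx_i=r_i\bu_i$. The weight $\prod(1-s r_i^2)^{-\frac{d+1}{2}}$ concentrates at $r_i\approx1$, and each radial integral contributes $\sim\frac{1}{d-1}(1-s)^{-\frac{d-1}{2}}$. The inner integral $\int_{[\bz_1,\dots,\bz_d]}(1-\|\bu\|^2)^{-\frac{d-1+\ell}{2}}\dint\bu$ converges, by monotone convergence, to $(d-1)\vol_{\HH^d}(S^\infty(\bu_1,\dots,\bu_d))$ for $\ell=0$ and to $\cH^{d-1}(F^\infty)$ for $\ell=1$. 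This is the hyperbolic interpretation stated after the definition of $J_{d,\ell}$, now applied to ideal vertices on the unit sphere. The limit is finite for $\ell=0$ in all $d$, and for $\ell=1$ when $d\ge3$. Hence
\[
J_{d,\ell}(s)\sim (d-1)^{1-\ell}\,(d-1)^{-d}(1-s)^{-\frac{d(d-1)}{2}}\int\Delta_{d-1}G_{d,\ell}\,\dint\sigma^d,
\]
which is a multiple of $C_{d,\ell}(1-s)^{-d(d-1)/2}$.

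Plugging this in, the $s$-integral becomes, up to constants, $\gamma^{d}\int(1-s)^{\frac{d-3}{2}-\frac{d(d-1)}{2}}e^{-\gamma\Phi}\,\dint s$. With $t=\gamma(1-s)^{-\frac{d-1}{2}}$ we have $\dint s\propto\gamma^{\frac{2}{d-1}}t^{-\frac{2}{d-1}-1}\dint t$, and the exponents combine to
\[
\gamma^{d}\cdot\gamma^{-(d-1)}\int_0^\infty t^{d-2}e^{-\kappa_{d-1}t/(d-1)}\,\dint t=\gamma\,(d-2)!\Big(\frac{d-1}{\kappa_{d-1}}\Big)^{d-1},
\]
which produces the stated constant. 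To justify the limit, split the integral at $s=1-\delta$. The part $s<1-\delta$ is $O(\gamma^d)$. On $[1-\delta,1)$ we use dominated convergence in $t$, with a uniform bound $J_{d,\ell}(s)\le C(1-s)^{-d(d-1)/2}$ and $\Phi(s)\ge c(1-s)^{-(d-1)/2}$. The uniform bound follows because the inner integral is bounded by its ideal limit, by monotonicity in $s$.

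\textbf{Case $d=2$, $\ell=1$.} The inner integral is $\int_{-\sqrt s}^{\sqrt s}$-type with weight $(1-u^2)^{-1}$. It grows like $\log\frac{1}{1-s}$, so $J_{2,1}(s)\sim c(1-s)^{-1}\log\frac{1}{1-s}$. In the $t$-variable, $\log\frac1{1-s}=2\log\frac t\gamma$. This yields $2\gamma\log\frac1\gamma$ times the same normalised integral, plus an $O(\gamma)$ remainder. The coefficient $2$ should be checked against the $\ell=0$ constant via $C_{2,0}=4\pi$.

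\textbf{Vertex intensity.} The final claim follows from Corollary~\ref{cor:Efron} with $\gamma=\lambda^{d-1}$:
\[
\xi_0=\frac{1}{d-1}\big(1-\EE V_\gamma\big),
\]
into which we insert the $\ell=0$ expansion.

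\textbf{Main obstacle.} The main difficulty is the uniform domination near $s=1$ and the precise radial asymptotics of $J_{d,\ell}$. This requires showing that the inner integral converges to the ideal simplex volume while the radial factors decouple. We would handle it by first bounding the inner integral by its value at $r_i=1$, and then applying Laplace-type asymptotics to each $r_i$ separately.
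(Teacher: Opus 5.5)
Your route is the paper's: Theorem~\ref{thm:LocalVolume}, the radial asymptotics $J_{d,\ell}(1-\tau)\sim C_{d,\ell}\tau^{-d(d-1)/2}$, dominated convergence after the cusp substitution, and Corollary~\ref{cor:Efron}. Your $t=\gamma(1-s)^{-(d-1)/2}$ is just the paper's $s=1-\gamma^{2/(d-1)}x$ followed by $u=x^{-(d-1)/2}$. The split at $s=1-\delta$ is fine. So is treating $d=2$, $\ell=0$ directly, where the inner integral is $|\arcsin z_1-\arcsin z_2|\le\pi$; the paper instead goes through Corollary~\ref{cor:lambda_asymptotics_d2}.

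\textbf{Domination step.} This fails as justified. In your parametrization $\bz_i=\sqrt s\,\bx_i$ the inner integral is indeed increasing in $s$. But monotone convergence only yields its value at $\bx_i=r_i\bu_i$, not at the ideal points. The proposed bound by the value at $r_i=1$ is false, because pushing one vertex radially outward need not enlarge the simplex. For instance, take $d=3$, $\ell=0$, with $\bu_2,\bu_3$ at angles $\pm\theta/2$ and $\bu_1$ at angle $0$. As $s\uparrow1$, the inner integral tends to $\theta\log\frac1\theta+O(\theta)$ for $r_1\approx0$, $r_2=r_3=1$. For $r_1=r_2=r_3=1$ it tends only to $2\vol_{\HH^3}(S^\infty(\bu_1,\bu_2,\bu_3))\approx\theta\log 2$. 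What you need is the configuration-free bound the paper uses. For $\ell=0$ the inner integral is $(d-1)$ times the volume of a hyperbolic $d$-simplex with vertex $\infty$. For $\ell=1$ it is the volume of a hyperbolic $(d-1)$-simplex in the supporting totally geodesic hemisphere. Hence it is bounded by the finite maximal volume of an ideal simplex in $\HH^d$, resp.\ $\HH^{d-1}$; for $\ell=1$ this needs $d\ge3$. This gives $J_{d,\ell}(s)\le C(1-s)^{-d(d-1)/2}$. The pointwise limit $(d-1)^{1-\ell}G_{d,\ell}$ is then immediate in the paper's variables $\bz_i=\sqrt{1-\tau y_i}\,\bu_i$ with $y_i$ fixed.

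\textbf{The case $d=2$, $\ell=1$.} Here you have not proved the stated coefficient. If $J_{2,1}(s)\sim c(1-s)^{-1}\log\frac1{1-s}$, your substitution gives $\EE S_\gamma\sim\frac c2\gamma\log\frac1\gamma$, so you need $c=4$. The constant $C_{2,0}=4\pi$ cannot supply this: it encodes the ideal-triangle area, not the logarithmic growth of geodesic length. You must compute $c$ directly. At leading order only the two ordered opposite-sign pairs matter. There $\Delta_1=|z_1|+|z_2|$, which may be replaced by $2$ at lower-order cost, and the inner integral equals $\operatorname{artanh}|z_1|+\operatorname{artanh}|z_2|$. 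Use
\[
\int_0^{\sqrt s}(1-r^2)^{-3/2}\dint r=\sqrt{s/(1-s)},\qquad \int_0^{\sqrt s}\operatorname{artanh}(r)(1-r^2)^{-3/2}\dint r=\tfrac12(1-s)^{-1/2}\log\tfrac1{1-s}+O\big((1-s)^{-1/2}\big).
\]
With the same-sign pairs of lower order, this yields $J_{2,1}(1-\tau)=4\tau^{-1}\log\frac1\tau+O(\tau^{-1})$, which is the paper's quadrant computation. This quantitative $O(\tau^{-1})$ remainder, together with $\gamma\Phi(1-\tau)=2t+O(\gamma)$, is also what the claimed $O(\gamma)$ error requires. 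A bare asymptotic equivalence only gives an error $o(\gamma\log\frac1\gamma)$.
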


In a last step, we compute the constants $C_{d,\ell}$ from
Corollary~\ref{cor:LocalVolumeAsymptotics} explicitly in dimension $d=3$.

\begin{figure}[t]
	\centering
	\begin{tikzpicture}[
			scale=0.85,
			>=Stealth,
			line cap=round,
			line join=round
		]


		\coordinate (u1) at ( 3.60, 0.55);
		\coordinate (u2) at (-1.85, 1.0-0.04);
		\coordinate (u3) at (-1.10,-0.10-0.12);

		\coordinate (q1) at ($(u1)+(0,2.60)$);
		\coordinate (q2) at ($(u2)+(0,2.60)$);
		\coordinate (q3) at ($(u3)+(0,2.60)$);

		\coordinate (r1) at ($(u1)+(0,5.75)$);
		\coordinate (r2) at ($(u2)+(0,5.75)$);
		\coordinate (r3) at ($(u3)+(0,5.75)$);

		\coordinate (gA) at (-4.20,-0.65);
		\coordinate (gB) at ( 4.30,-0.65);
		\coordinate (gC) at ( 5.40, 1.50);
		\coordinate (gD) at (-3.10, 1.50);

		\coordinate (hA) at ($(gA)+(0,2.60)$);
		\coordinate (hB) at ($(gB)+(0,2.60)$);
		\coordinate (hC) at ($(gC)+(0,2.60)$);
		\coordinate (hD) at ($(gD)+(0,2.60)$);


		\fill[gray!7]
		(gA)--(gB)--(gC)--(gD)--cycle;

		\draw[gray!35]
		(gA)--(gB)--(gC)--(gD)--cycle;

		\draw[gray!70,thick]
		(0.60,0.47) ellipse [x radius=3.00,y radius=0.86];

		\node[gray!50!black] at (2.40,0.1) {$\mathbb S^1$};
		\node[gray!50!black] at (4.55,0.95) {$\RR^2$};

		\draw[blue!60,thin,dashed]
		(u1)--(u2)--(u3)--cycle;

		\draw[gray!55,thin]
		(3.60,0.47) arc [start angle=0, end angle=180, x radius=3.00, y radius=2.7385];


		\pgfmathsetmacro{\XA}{0.8634}  \pgfmathsetmacro{\YA}{0.7559}
		\pgfmathsetmacro{\PA}{-2.7237} \pgfmathsetmacro{\QA}{0.2062} \pgfmathsetmacro{\RA}{2.4415}

		\pgfmathsetmacro{\XB}{-1.4955} \pgfmathsetmacro{\YB}{0.3648}
		\pgfmathsetmacro{\PB}{0.3648}  \pgfmathsetmacro{\QB}{-0.5973} \pgfmathsetmacro{\RB}{1.8332}

		\pgfmathsetmacro{\XC}{1.2282}  \pgfmathsetmacro{\YC}{0.1586}
		\pgfmathsetmacro{\PC}{2.3589}  \pgfmathsetmacro{\QC}{0.3911} \pgfmathsetmacro{\RC}{2.3616}

		\pgfmathsetmacro{\XS}{0.60} \pgfmathsetmacro{\YS}{0.47}
		\pgfmathsetmacro{\PS}{3.00} \pgfmathsetmacro{\RS}{2.7385}

		\fill[blue!18,opacity=0.34]
		plot[domain=180:0,samples=40,variable=\t] ({\XC+\PC*cos(\t)},{\YC+\QC*cos(\t)+\RC*sin(\t)})
		-- plot[domain=0:139.107,samples=40,variable=\t] ({\XS+\PS*cos(\t)},{\YS+\RS*sin(\t)})
		-- plot[domain=118.173:0,samples=40,variable=\t] ({\XB+\PB*cos(\t)},{\YB+\QB*cos(\t)+\RB*sin(\t)})
		-- cycle;

		\draw[blue,very thick]
		plot[domain=180:0,samples=40,variable=\t] ({\XA+\PA*cos(\t)},{\YA+\QA*cos(\t)+\RA*sin(\t)});

		\draw[blue,very thick]
		plot[domain=180:0,samples=40,variable=\t] ({\XB+\PB*cos(\t)},{\YB+\QB*cos(\t)+\RB*sin(\t)});

		\draw[blue,very thick]
		plot[domain=180:0,samples=40,variable=\t] ({\XC+\PC*cos(\t)},{\YC+\QC*cos(\t)+\RC*sin(\t)});


		\fill[blue!8,opacity=0.15]
		(q1)--(q2)--(u2)
		plot[domain=0:180,samples=40,variable=\t] ({\XA+\PA*cos(\t)},{\YA+\QA*cos(\t)+\RA*sin(\t)})
		--cycle;

		\fill[blue!10,opacity=0.13]
		(q2)--(q3)--(u3)
		plot[domain=0:180,samples=40,variable=\t] ({\XB+\PB*cos(\t)},{\YB+\QB*cos(\t)+\RB*sin(\t)})
		--cycle;

		\fill[blue!12,opacity=0.11]
		(q3)--(q1)--(u1)
		plot[domain=0:180,samples=40,variable=\t] ({\XC+\PC*cos(\t)},{\YC+\QC*cos(\t)+\RC*sin(\t)})
		--cycle;



		\fill[gray!38,opacity=0.38]
		(hA)--(hB)--(hC)--(hD)--cycle;

		\draw[gray!55!black]
		(hA)--(hB)--(hC)--(hD)--cycle;

		\node[gray!50!black] at (4.35,3.45)
		{$H_1$};


		\fill[blue!28,opacity=0.55]
		(q1)--(q2)--(q3)--cycle;

		\draw[blue,very thick]
		(q1)--(q2)--(q3)--cycle;



		\filldraw[blue] (u1) circle (2.1pt);
		\filldraw[blue] (u2) circle (2.1pt);
		\filldraw[blue] (u3) circle (2.1pt);

		\node[right]      at (u1) {$\bu_1$};
		\node[above left] at (u2) {$\bu_2$};
		\node[below left] at (u3) {$\bu_3$};

		\filldraw[blue] (q1) circle (1.8pt);
		\filldraw[blue] (q2) circle (1.8pt);
		\filldraw[blue] (q3) circle (1.8pt);


		\draw[blue,very thick,->] (u1)--(r1);
		\draw[blue,very thick,->] (u2)--(r2);
		\draw[blue,very thick,->] (u3)--(r3);

	\end{tikzpicture}

	\caption{The ideal simplex
		$S^\infty(\bu_1,\bu_2,\bu_3)$ in the upper halfspace model for $\HH^3$.
		The three geodesic edges with ideal endpoint $\infty$ are vertical
		parallel rays. The fourth face is the part of the Euclidean hemisphere
		over $\SS^1$ lying above the flat triangle $[\bu_1,\bu_2,\bu_3]$
		(dashed on $\RR^2$). It meets $\RR^2$ orthogonally at
		$\bu_1,\bu_2,\bu_3$ and touches $H_1$ tangentially. The horosphere
		$H_1$ intersects the simplex in the
		Euclidean triangle with vertices
		$(\bu_1,1),(\bu_2,1),(\bu_3,1)$.
	}
	\label{fig:Sinfinity}
\end{figure}

\begin{proposition}\label{cor:Vold=2d=3}
	We have
	\[
		C_{3,0}=\frac{3\pi^3}{4}
		\qquad\text{and}\qquad
		C_{3,1}=\frac{3\pi^3}{2}.
	\]
\end{proposition}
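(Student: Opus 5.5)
The plan is to specialise the definition of $C_{3,\ell}$ to $d=3$, express both integrands through the three angles of the Euclidean triangle $[\bu_1,\bu_2,\bu_3]$ inscribed in $\SS^1$, and reduce everything to elementary integrals over the angle simplex. For $d=3$ we have $(d-1)^{d-1+\ell}=2^{2+\ell}$. We use two geometric facts about the ideal tetrahedron $S^\infty(\bu_1,\bu_2,\bu_3)$, whose vertices are $\bu_1,\bu_2,\bu_3\in\SS^1$ and $\infty$.

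\emph{Facet.} The facet $F^\infty(\bu_1,\bu_2,\bu_3)$ is an ideal triangle in $\HH^2$. Hence $G_{3,1}\equiv\pi$ by Gauss--Bonnet.

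\emph{Volume.} The three vertical faces meet along the vertical edges at dihedral angles equal to the Euclidean angles $\alpha,\beta,\gamma$ of the triangle. The opposite dihedral angles of an ideal tetrahedron coincide, so Milnor's formula gives
\[
G_{3,0}=\russianL(\alpha)+\russianL(\beta)+\russianL(\gamma),
\]
where $\russianL$ is the Lobachevsky function.

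The area is $\Delta_2=2\sin\alpha\sin\beta\sin\gamma$, since the circumradius is $1$.

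Next we change variables from $(\bu_1,\bu_2,\bu_3)\in(\SS^1)^3$ to the angles. We first use rotation invariance to fix $\bu_1$, which contributes a factor $2\pi$. The remaining two points are described by the central arcs $2\alpha,2\beta,2\gamma$ with $\alpha+\beta+\gamma=\pi$. Counting the two orientations and the Jacobian $4$ gives
\[
\int_{(\SS^1)^3}f\,\dint\sigma^{3}=16\pi\int_{\{\alpha,\beta>0,\ \alpha+\beta<\pi\}}f\,\dint\alpha\,\dint\beta .
\]
As a check, $f\equiv1$ gives $16\pi\cdot\pi^2/2=8\pi^3=(2\pi)^3$.

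\emph{Computing $C_{3,1}$.} Here we only need $\int\Delta_2\,\dint\sigma^3$. It equals $(2\pi)^3$ times the classical mean area $\frac{3}{2\pi}$ of a triangle inscribed in the unit circle, that is, $12\pi^2$. The same value follows from the elementary integral computed below. Therefore
\[
C_{3,1}=\frac{\pi\cdot 12\pi^2}{8}=\frac{3\pi^3}{2}.
\]

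\emph{Computing $C_{3,0}$.} By symmetry in the three angles,
\[
C_{3,0}=\frac14\cdot16\pi\cdot 3\int 2\sin\alpha\sin\beta\sin(\alpha+\beta)\,\russianL(\alpha)\,\dint\alpha\,\dint\beta .
\]
We first integrate in $\beta$ over $(0,\pi-\alpha)$:
\[
\int_0^{\pi-\alpha}\sin\beta\sin(\alpha+\beta)\,\dint\beta=\tfrac12\bigl((\pi-\alpha)\cos\alpha+\sin\alpha\bigr).
\]
This reduces the claim to
\[
C_{3,0}=12\pi\int_0^\pi\bigl((\pi-\alpha)\cos\alpha+\sin\alpha\bigr)\russianL(\alpha)\,\dint\alpha,
\]
so it remains to prove that this one-dimensional integral equals $\pi^2/16$.

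The main obstacle is this one-dimensional integral, and I would evaluate it with the Fourier series $\russianL(\alpha)=\frac12\sum_{n\ge1}\frac{\sin(2n\alpha)}{n^2}$. Integrating term by term is justified by absolute convergence. For each $n$ one needs
\[
\int_0^\pi\sin\alpha\sin(2n\alpha)\,\dint\alpha
\qquad\text{and}\qquad
\int_0^\pi(\pi-\alpha)\cos\alpha\sin(2n\alpha)\,\dint\alpha .
\]
The first vanishes for every $n$. The second, after one integration by parts, equals $\frac{2n\pi}{4n^2-1}$. This leaves a rational series in $n$, which should be summed by partial fractions to $\pi^2/16$ and thus yield $C_{3,0}=\frac{3\pi^3}{4}$. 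If the bookkeeping turns out delicate, an alternative is to write $\russianL(\alpha)=-\int_0^\alpha\log|2\sin t|\,\dint t$ and integrate by parts, moving the derivative onto the trigonometric weight. The resulting integrals of $\log\sin$ against trigonometric polynomials are classical.
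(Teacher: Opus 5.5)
\textbf{There is a genuine gap in the computation of $C_{3,0}$.} The setup is correct: the dihedral angles and Milnor's formula, $\Delta_2=2\sin\alpha\sin\beta\sin\gamma$, the factor $16\pi$, and your value $C_{3,1}=\frac{3\pi^3}{2}$. The error comes when you integrate out $\beta$. The integrand is $2\sin\alpha\,\sin\beta\sin(\alpha+\beta)\,\russianL(\alpha)$, so the $\beta$-integration gives
\[
C_{3,0}=12\pi\int_0^\pi \sin\alpha\,\bigl((\pi-\alpha)\cos\alpha+\sin\alpha\bigr)\russianL(\alpha)\,\dint\alpha .
\]
You dropped the factor $\sin\alpha$, and this is not harmless. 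Your termwise Fourier values are correct for the integrand you wrote down, but they sum to
\[
\frac12\sum_{n\ge1}\frac{1}{n^2}\cdot\frac{2n\pi}{4n^2-1}
=\pi\sum_{n\ge1}\Bigl(\frac{1}{2n-1}+\frac{1}{2n+1}-\frac{1}{n}\Bigr)
=\pi\,(2\log 2-1)\approx 1.214,
\]
not $\pi^2/16\approx 0.617$. So the claim that the series ``should be summed by partial fractions to $\pi^2/16$'' is false. As written, your argument would give $C_{3,0}=12\pi^2(2\log 2-1)$.

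\textbf{The repair is short, and it makes your route cleaner than the paper's.} Write
\[
\sin\alpha\bigl((\pi-\alpha)\cos\alpha+\sin\alpha\bigr)=\tfrac12(\pi-\alpha)\sin 2\alpha+\sin^2\alpha .
\]
Against $\sin(2n\alpha)$ on $(0,\pi)$, the term $\sin^2\alpha=\frac12(1-\cos 2\alpha)$ integrates to $0$ for every $n$. Moreover, $\int_0^\pi(\pi-\alpha)\sin 2\alpha\,\sin(2n\alpha)\,\dint\alpha$ equals $\frac{\pi^2}{4}$ for $n=1$ and $0$ for $n\ge2$. Hence only the $n=1$ term of $\russianL(\alpha)=\frac12\sum_{n\ge1}\frac{\sin(2n\alpha)}{n^2}$ survives. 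The integral is $\frac12\cdot\frac12\cdot\frac{\pi^2}{4}=\frac{\pi^2}{16}$, giving $C_{3,0}=\frac{3\pi^3}{4}$.

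The paper reaches the same one-dimensional integral, written in central angles $2\alpha$. It removes the $\sin^2$ part using $\russianL(\pi-x)=-\russianL(x)$. It then passes to $\operatorname{Cl}_2$ and cites $\int_0^\pi\sin\theta\operatorname{Cl}_2(\theta)\,\dint\theta=\frac{\pi}{2}$ from the literature. Your corrected Fourier argument proves that cited identity directly by orthogonality, so your proof becomes self-contained.
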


\begin{remark}
	The integrands appearing in the definitions of $C_{d,0}$ and
	$C_{d,1}$ admit intrinsic hyperbolic interpretations. Let $H_1:=\{(\bv,1):\bv\in\RR^{d-1}\}$
	be the horosphere at height $1$. Then
	\[
		\Delta_{d-1}(\bu_1,\ldots,\bu_d)
		=
		\cH^{d-1}_{\HH^d}\bigl(
		S^\infty(\bu_1,\ldots,\bu_d)\cap H_1
		\bigr).
	\]
	Consequently, for $\ell\in\{0,1\}$, the product $\Delta_{d-1}(\bu_1,\ldots,\bu_d)\,G_{d,\ell}(\bu_1,\ldots,\bu_d)$
	can be written as
	\[
		\cH^{d-1}_{\HH^d}\bigl(
		S^\infty(\bu_1,\ldots,\bu_d)\cap H_1
		\bigr)
		\cdot
		\begin{cases}
			\vol_{\HH^d}\bigl(
			S^\infty(\bu_1,\ldots,\bu_d)
			\bigr), & \ell=0, \\[1mm]
			\cH_{\HH^d}^{d-1}\bigl(
			F^\infty(\bu_1,\ldots,\bu_d)
			\bigr), & \ell=1.
		\end{cases}
	\]
	Thus $C_{d,0}$ involves the product of the horospherical
	cross-sectional volume and the volume of the ideal simplex, whereas
	$C_{d,1}$ involves the same cross-sectional volume and the intrinsic
	hyperbolic volume of the ideal facet opposite $\infty$, see
	Figure~\ref{fig:Sinfinity}.
\end{remark}

\section{Proofs}

\subsection{Proofs of Theorem~\ref{thm:UnboundedOneEnded} and Theorem~\ref{thm:DlambdaIsTessellation}}

We first prove Theorem~\ref{thm:DlambdaIsTessellation}, since the result will be used in the proof of Theorem \ref{thm:UnboundedOneEnded}.

\begin{proof}[Proof of Theorem~\ref{thm:DlambdaIsTessellation}]
	By the mapping theorem for Poisson point processes, the image process
	$\widetilde\eta_{\gamma,\lambda}=T(\eta_{\gamma,\lambda})$ is a Poisson
	point process. Its intensity measure is the image of
	$\gamma\vol_{\HH^d}|_{B_\lambda^\infty}$ under $T$, and is therefore given by
	\eqref{eq:IntensityEta}.

	It remains to identify the simplices. Let $(\bv_1,y_1),\ldots,(\bv_d,y_d)\in(\eta_{\gamma,\lambda})_{\neq}^d$, and put
	$h_i=y_i^2$. Let
	$B(\bv_1,y_1,\ldots,\bv_d,y_d)$ be the closed halfball whose boundary is orthogonal to
	$\RR^{d-1}\times\{0\}$ and contains
	$(\bv_1,y_1), \ldots, (\bv_d,y_d)$. Then there are unique
	$\bw\in\RR^{d-1}$ and $r>0$ such that
	\[
		B(\bv_1,y_1,\ldots,\bv_d,y_d)
		=
		\{(\bv,y)\in\RR^{d-1}\times(0,\infty):
		\|\bv-\bw\|^2+y^2\le r^2\}.
	\]
	Applying $T(\bv,y)=(\bv,y^2)$, we obtain
	\[
		T\big(B(\bv_1,y_1,\ldots,\bv_d,y_d)\big)
		=
		\{(\bv,h)\in\RR^{d-1}\times(0,\infty):
		\|\bv-\bw\|^2+h\le r^2\}.
	\]
	This is precisely the intersection of
	$\RR^{d-1}\times(0,\infty)$ with the downward paraboloid
	\[
		\Pi\big((\bv_1,h_1),\ldots,(\bv_d,h_d)\big)
		=
		\{(\bv,h)\in\RR^{d-1}\times\RR:
		h\le r^2-\|\bv-\bw\|^2\}.
	\]
	In particular, the apex of this paraboloid is $(\bw,r^2)$, so that its
	spatial apex coordinate agrees with the spatial apex coordinate of the
	halfball.

	Since $T$ is a bijection from
	$\RR^{d-1}\times(0,\infty)$ onto itself, the empty halfball condition
	\[
		\operatorname{int}B(\bv_1,y_1,\ldots,\bv_d,y_d)
		\cap\eta_{\gamma,\lambda}
		=
		\varnothing
	\]
	is equivalent to the empty paraboloid condition
	\[
		\operatorname{int}\Pi\big((\bv_1,h_1),\ldots,(\bv_d,h_d)\big)
		\cap\widetilde\eta_{\gamma,\lambda}
		=
		\varnothing .
	\]
	Moreover, the transformation $T$ leaves the spatial coordinates
	unchanged. Hence both constructions produce the same simplex
	$[\bv_1,\ldots,\bv_d]$. This proves that $\mathcal D_{\gamma,\lambda}=\mathcal L^*(\widetilde\eta_{\gamma,\lambda})$
	almost surely.
\end{proof}

\begin{proof}[Proof of Theorem~\ref{thm:UnboundedOneEnded}] 
	We first prove the assertion about the ideal boundary. Since $B_\lambda^\infty$
	is a horoball, it is geodesically convex. Hence $K_{\gamma,\lambda}=\conv(\eta_{\gamma,\lambda})\subseteq B_\lambda^\infty$.
	In the upper halfspace compactification, the closure of $B_\lambda^\infty$
	meets the ideal boundary in exactly one point, namely $\infty$.
	Consequently, $\operatorname{cl}(K_{\gamma,\lambda})\cap\partial_\infty\HH^d\subseteq \{\infty\}$, and it remains to show that $\infty$ is actually attained as an ideal
	boundary point. Fix $H>\lambda$ and put $A_H:=\{(\bv,y)\in\HH^d:\lambda\le y\le H\}$.
	Then
	\[
		\vol_{\HH^d}(A_H)
		=
		\int_\lambda^H\int_{\RR^{d-1}} y^{-d}\,\dint \bv\,\dint y
		=
		\infty .
	\]
	Hence $\eta_{\gamma,\lambda}(A_H)=\infty$ almost surely. Since
	$\eta_{\gamma,\lambda}$ is locally finite, there is almost surely a
	sequence of points $(\bv_n,y_n)\in\eta_{\gamma,\lambda}\cap A_H$
	with $\|\bv_n\|\to\infty$. Such a sequence converges to the ideal point
	$\infty$ in the upper halfspace compactification. Since
	$\eta_{\gamma,\lambda}\subseteq K_{\gamma,\lambda}$, this yields $\infty\in\operatorname{cl}(K_{\gamma,\lambda})\cap\partial_\infty\HH^d$ and therefore
	\[
		\operatorname{cl}(K_{\gamma,\lambda})\cap\partial_\infty\HH^d
		=
		\{\infty\}.
	\]
	In particular, $K_{\gamma,\lambda}$ is unbounded and one-ended.

Write $\widehat{K}$ for the closure of $K_{\gamma,\lambda}$
in $\HH^d$. Since $\widehat{K}$ is closed and convex and
has $\infty$ in its ideal closure, it contains the vertical
geodesic ray from each of its points towards $\infty$.
Indeed, these rays are limits of geodesic segments joining
the given point to process points converging to $\infty$.

By Theorem~\ref{thm:DlambdaIsTessellation}, the projected
simplices form a locally finite tessellation
$\cD_{\gamma,\lambda}$ of $\RR^{d-1}$. Each of its cells lifts to
a geodesic simplex with vertices in $\eta_{\gamma,\lambda}$
on a supporting hemisphere whose open lower side contains
no process points. Hence the lifted simplex belongs to
$K_{\gamma,\lambda}$, and $\widehat{K}$ lies on the closed
upper side of the hemisphere.

Every $\bv\in\RR^{d-1}$ belongs to at least one cell of $\cD_{\gamma,\lambda}$.
Let $(\bv,y)\in\RR^{d-1}\times(0,\infty)$ be the corresponding point of its lifted
simplex. The supporting halfspace and the vertical ray
property imply
\[
    \widehat{K}\cap\bigl(\{\bv\}\times(0,\infty)\bigr)
    =\{\bv\}\times[y,\infty).
\]
In particular, lifts of adjacent cells agree on their
common faces. By local finiteness of $\cD_{\gamma,\lambda}$, these lifts form a continuous
graph describing the whole boundary of $\widehat{K}$.
Since every lifted simplex belongs to $K_{\gamma,\lambda}$,
we conclude that $\partial\widehat{K}\subseteq K_{\gamma,\lambda}$.

The Poisson process almost surely contains $d+1$ points
in general position, so $K_{\gamma,\lambda}$ is
full-dimensional. A full-dimensional convex set and its
closure have the same interior. Together with the boundary
inclusion above, this proves
$K_{\gamma,\lambda}=\widehat{K}$, and hence closedness of $K_{\gamma,\lambda}$.

Almost surely, no $d+1$ process points lie on a common
totally geodesic hypersurface. Therefore, the lifted
simplices are precisely the facets of $K_{\gamma,\lambda}$.
Every compact subset of $\HH^d$ has compact spatial
projection and hence meets almost surely only finitely many lifted
simplices and their faces. Thus $K_{\gamma,\lambda}$ is
locally polyhedral with locally finite face structure.
Finally, the locally finite tessellation
$\cD_{\gamma,\lambda}$ has countably infinitely many cells,
since its bounded simplices cover $\RR^{d-1}$.
The facet correspondence proves that the face structure
of $K_{\gamma,\lambda}$ is countably infinite.
\end{proof}

\subsection{Proof of Theorem~\ref{thm:CellIntensity} and its corollaries}

\begin{proof}[Proof of Theorem~\ref{thm:CellIntensity}]
	We start by using the definition \eqref{eq:CellIntensity} of $\xi_{d-1}(\gamma,\lambda)$, in which we may choose $W=[0,1]^{d-1}$, and the multivariate Mecke equation \cite[Theorem 4.4]{LPbook} for Poisson point processes. Together with \eqref{eq:IntensityEta} we get
	\begin{align*}
		\xi_{d-1}(\gamma,\lambda) & = \frac{\gamma^d}{2^dd!}\int_{(\RR^{d-1})^d}\int_{(0,\infty)^d}{\bf 1}\{A(\bv_1,h_1,\ldots,\bv_d,h_d)\in[0,1]^{d-1}\}                                                                                           \\
		                          & \hspace{0.5cm}\times\PP\big[{\rm int}\,\Pi(\bv_1,h_1,\ldots,\bv_d,h_d)\cap\widetilde\eta_{\gamma,\lambda}=\varnothing\big]\prod_{i=1}^d h_i^{-\frac{d+1}{2}}{\bf 1}\{h_i\geq\lambda^2\}\,\dint h_i\,\dint\bv_i.
	\end{align*}
	Next, we consider the change of variables
	\begin{align*}
		G:\RR^{d-1}\times(0,\infty)\times(\inter\BB^{d-1})^d & \to(\RR^{d-1}\times(0,\infty))^d                                              \\
		(\bw,t,\bz_1,\ldots,\bz_d)                     & \mapsto (\bw+t\bz_1,t^2(1-\|\bz_1\|^2),\ldots,\bw+t\bz_d,t^2(1-\|\bz_d\|^2)),
	\end{align*}
	which may be regarded as a parabolic Blaschke--Petkantschin-type integral-geometric transformation as described in \cite[Chapter 7]{SW}.
	The Jacobian $J(G)$ of $G$ is given by
	\[
		J(G)=2^dt^{d^2}(d-1)!\Delta_{d-1}(\bz_1,\ldots,\bz_d),
	\]
	where we recall that $\Delta_{d-1}(\bz_1,\ldots,\bz_d)$ denotes the Euclidean $(d-1)$-volume of the simplex with vertices $\bz_1,\ldots,\bz_d$. This was shown in \cite[Proof of Theorem 1, page 1272]{GKT22}. With this change of variables the paraboloid $\Pi(\bv_1,h_1,\ldots,\bv_d,h_d)$ is the paraboloid with apex $(\bw,t^2)$, which we denote by $\Pi_{(\bw,t^2)}$, while $(\bz_1,\ldots,\bz_d)$ encode the configuration of $(\bv_1,\ldots,\bv_d)$ after shifting them simultaneously by $\bw$ and rescaling by $t$. Since the intersection of $\Pi(\bv_1,h_1,\ldots,\bv_d,h_d)$ with $\RR^{d-1}$ is a ball with centre $\bw$ and radius $t$ this implies that $\bz_i$ belong to the unit ball in $\RR^{d-1}$ for any $1\leq i\leq d$. This allows us to ``decouple'' position $\bw$ and scaling $t$ parameters and configuration $(\bz_1,\ldots,\bz_d)$, reflecting relative positions of $\bv_1,\ldots,\bv_d$ with respect to each other.

	Applying the transformation $G$ to the integral representation for $\xi_{d-1}(\gamma,\lambda)$ we obtain
	\begin{align*}
		\xi_{d-1}(\gamma,\lambda) & = \frac{\gamma^d(d-1)!}{d!}\,\int_{\RR^{d-1}}{\bf 1}\{\bw\in[0,1]^{d-1}\}\int_{(\RR^{d-1})^d}\int_\lambda^\infty t^{d^2}\,\PP\big[{\rm int}\,\Pi_{(\bw,t^2)}\cap\widetilde\eta_{\gamma,\lambda}=\varnothing\big] \\
		                          & \qquad \times \Delta_{d-1}(\bz_1,\ldots,\bz_d)\prod_{i=1}^d(t^2(1-\|\bz_i\|^2))^{-\frac{d+1}{2}}\,{\bf 1}\Big\{1-\|\bz_i\|^2\geq\frac{\lambda^2}{t^2}\Big\}\,\dint\bz_i\,\dint t\,\dint \bw                      \\
		                          & =\frac{\gamma^d}{d}\int_\lambda^\infty t^{-d}\int_{(\RR^{d-1})^d}\PP\big[{\rm int}\,\Pi_{(o,t^2)}\cap\widetilde\eta_{\gamma,\lambda}=\varnothing\big]\,\Delta_{d-1}(\bz_1,\ldots,\bz_d)                          \\
		                          & \qquad \times \prod_{i=1}^d(1-\|\bz_i\|^2)^{-\frac{d+1}{2}}\,{\bf 1}\Big\{1-\|\bz_i\|^2\geq\frac{\lambda^2}{t^2}\Big\}\,\dint\bz_i\,\dint t,
	\end{align*}
	where $o$ is the origin of $\RR^{d-1}$ and in the second step we used that $\widetilde\eta_{\gamma,\lambda}$ is stationary with respect to the spatial coordinate.

	In the next step we determine the probability $\PP[{\rm int}\,\Pi_{(o,t^2)}\cap\widetilde\eta_{\gamma,\lambda}=\varnothing]$, which equals one if $t<\lambda$, since $\widetilde\eta_{\gamma,\lambda}$ has no points of height below $\lambda^2$. The restriction $t\ge\lambda$ above stems from the indicators ${\bf 1}\{1-\|\bz_i\|^2\ge\lambda^2/t^2\}$, which vanish for $t<\lambda$ because $\|\bz_i\|\ge 0$. Using that $\widetilde\eta_{\gamma,\lambda}$ is a Poisson point process we first obtain
	$$
		\PP\big[{\rm int}\,\Pi_{(o,t^2)}\cap\widetilde\eta_{\gamma,\lambda}=\varnothing\big] = \exp\big(-\Lambda_{\gamma,\lambda}(\Pi_{(o,t^2)})\big).
	$$
	Moreover, using \eqref{eq:IntensityEta} we obtain
	\begin{align*}
		\Lambda_{\gamma,\lambda}(\Pi_{(o,t^2)})
		 & =
		\frac{\gamma}{2}
		\int_{\RR^{d-1}}
		\int_{\lambda^2}^{\infty}
		{\bf 1}\{\|\bv\|^2+h\le t^2\}
		h^{-\frac{d+1}{2}}
		\,\dint h\,\dint\bv =
		\frac{\gamma\kappa_{d-1}}{2}
		\int_{\lambda^2}^{t^2}
		(t^2-h)^{\frac{d-1}{2}}
		h^{-\frac{d+1}{2}}
		\,\dint h .
	\end{align*}
	For \(t\ge\lambda\), we use the substitution $h=t^2-\bigl(t^2-\lambda^2\bigr)y$, $y\in[0,1]$.
	Then, by \cite[Equation~15.6.1]{NIST},
	\[
		\Lambda_{\gamma,\lambda}(\Pi_{(o,t^2)})
		=
		\gamma\frac{\kappa_{d-1}}{d+1}
		\Big(1-\frac{\lambda^2}{t^2}\Big)^{\frac{d+1}{2}}
		{}_2F_1\!\Big(
		\tfrac{d+1}{2},\tfrac{d+1}{2};
		\tfrac{d+3}{2};
		1-\frac{\lambda^2}{t^2}
		\Big).
	\]
	As a result,
	$$
		\PP\big[{\rm int}\,\Pi_{(o,t^2)}\cap\widetilde\eta_{\gamma,\lambda}=\varnothing\big] = \exp\!\Big(
		-\gamma\frac{\kappa_{d-1}}{d+1}
		\Big(1-\frac{\lambda^2}{t^2}\Big)^{\frac{d+1}{2}}
		{}_2F_1\!\Big(\tfrac{d+1}{2},\tfrac{d+1}{2};\tfrac{d+3}{2};1-\tfrac{\lambda^2}{t^2}\Big)
		\Big),
	$$
	and the cell intensity simplifies to
	\begin{align*}
		\xi_{d-1}(\gamma,\lambda) & = \frac{\gamma^d}{d}\int_\lambda^\infty t^{-d}\exp\!\Big(
		-\gamma\frac{\kappa_{d-1}}{d+1}
		\Big(1-\frac{\lambda^2}{t^2}\Big)^{\frac{d+1}{2}}
		{}_2F_1\!\Big(\tfrac{d+1}{2},\tfrac{d+1}{2};\tfrac{d+3}{2};1-\tfrac{\lambda^2}{t^2}\Big)
		\Big)                                                                                                                                                                                                                        \\
		                          & \qquad\times \int_{(\RR^{d-1})^d}\Delta_{d-1}(\bz_1,\ldots,\bz_d)\prod_{i=1}^d(1-\|\bz_i\|^2)^{-\frac{d+1}{2}}\,{\bf 1}\Big\{1-\|\bz_i\|^2\geq\frac{\lambda^2}{t^2}\Big\}\,\dint\bz_i\, \dint t.
	\end{align*}

	Finally, we perform the substitution $s=1-\frac{\lambda^2}{t^2}$, $t=\frac{\lambda}{\sqrt{1-s}}$ with $s\in(0,1)$.
	Then $\dint t=\frac{\lambda}{2}(1-s)^{-3/2}\,\dint s$ and $t^{-d}=\lambda^{-d}(1-s)^{d/2}$,
	so that $t^{-d}\,\dint t=\frac{\lambda^{1-d}}{2}(1-s)^{\frac{d-3}{2}}\,\dint s$.
	Moreover,
	\[
		{\bf 1}\Big\{1-\|\bz_i\|^2\ge \frac{\lambda^2}{t^2}\Big\}
		={\bf 1}\big\{\|\bz_i\|^2\le s\big\}.
	\]
	Substitution into the expression for $\xi_{d-1}(\gamma,\lambda)$ yields
	\begin{align*}
		\xi_{d-1}(\gamma,\lambda)
		 & =\frac{\gamma^d\,\lambda^{1-d}}{2d}
		\int_0^1 (1-s)^{\frac{d-3}{2}}
		\exp\!\Big(
		-\gamma\frac{\kappa_{d-1}}{d+1}
		s^{\frac{d+1}{2}}
		{}_2F_1\!\Big(\tfrac{d+1}{2},\tfrac{d+1}{2};\tfrac{d+3}{2};s\Big)
		\Big)\,I_d(s)\,\dint s
	\end{align*}
	with
	$$
		I_d(s) = 	\int_{(\RR^{d-1})^d}
		\Delta_{d-1}(\bz_1,\ldots,\bz_d)
		\prod_{i=1}^d(1-\|\bz_i\|^2)^{-\frac{d+1}{2}}
		\,{\bf 1}\{\|\bz_i\|^2\le s\}
		\,\dint\bz_i.
	$$
	This completes the proof of the theorem.
\end{proof}

\begin{proof}[Proof of Corollary~\ref{cor:CellIntensity_d2}]
	We start from Theorem~\ref{thm:CellIntensity} and set $d=2$. Using $\kappa_1=2$,
	\[
		\xi_1(\gamma,\lambda)
		=\frac{\gamma^2}{4\lambda}
		\int_0^1 (1-s)^{-1/2}\,
		\exp\!\Big(
		-\frac{2\gamma}{3}\,
		s^{3/2}\,
		{}_2F_1\!\Big(\tfrac32,\tfrac32;\tfrac52;s\Big)
		\Big)\,I_2(s)\,\dint s,
	\]
	where
	\[
		I_2(s)=\int_{[-\sqrt{s},\sqrt{s}]^2}|z_1-z_2|\,(1-z_1^2)^{-3/2}(1-z_2^2)^{-3/2}\,\dint z_1\dint z_2.
	\]
	We first eliminate the hypergeometric term. By Remark~\ref{rem:Hypergeometric} we have
	\begin{align*}
		s^{3/2}{}_2F_1\!\Big(\tfrac32,\tfrac32;\tfrac52;s\Big)
		 & =6s^{3/2}\Big(\frac{\arcsin(\sqrt{s})}{\sqrt{s}}\Big)'=3\Big(\frac{\sqrt{s}}{\sqrt{1-s}}-\arcsin(\sqrt{s})\Big),
	\end{align*}
	and hence
	\[
		\exp\!\Big(
		-\frac{2\gamma}{3}\,
		s^{3/2}\,
		{}_2F_1\!\Big(\tfrac32,\tfrac32;\tfrac52;s\Big)
		\Big)
		=
		\exp\!\Big(
		-2\gamma\Big(\frac{\sqrt{s}}{\sqrt{1-s}}-\arcsin(\sqrt{s})\Big)
		\Big).
	\]
	Next we compute $I_2(s)$. Substitute $u_i=z_i/\sqrt{1-z_i^2}$, $i\in\{1,2\}$ (hence, $z_i=u_i/\sqrt{1+u_i^2}$, $\dint u_i=(1-z_i^2)^{-3/2}\dint z_i$) in order to obtain
	\[
		I_2(s)=\int_{[-U,U]^2}\Big|\frac{u_1}{\sqrt{1+u_1^2}}-\frac{u_2}{\sqrt{1+u_2^2}}\Big|\,\dint u_1\dint u_2,
	\]
	where $U=\sqrt{s}/\sqrt{1-s}$.
	Since $u\mapsto u/\sqrt{1+u^2}$ is odd and strictly increasing, symmetry yields
	\begin{align*}
		I_2(s) & =2\int_{-U}^U\int_{-U}^{u_1}\Big(\frac{u_1}{\sqrt{1+u_1^2}}-\frac{u_2}{\sqrt{1+u_2^2}}\Big)\dint u_2\dint u_1 \\
		       & =2\int_{-U}^U\Big((u+U)\frac{u}{\sqrt{1+u^2}}-(U-u)\frac{u}{\sqrt{1+u^2}}\Big)\dint u                         \\
		       & =8\int_0^U \frac{u^2}{\sqrt{1+u^2}}\,\dint u,
	\end{align*}
	where in the second step we integrated the first summand with respect to $u_2$ and set $u_1=u$, while the second summand we integrated with respect to $u_1$ and set $u_2=u$. Using
	\[
		\int \frac{u^2}{\sqrt{1+u^2}}\,\dint u
		=\frac12\Big(u\sqrt{1+u^2}-\operatorname{arsinh}(u)\Big),
	\]
	we obtain
	\[
		I_2(s)=4\Big(U\sqrt{1+U^2}-\operatorname{arsinh}(U)\Big).
	\]
	Now 
	\[
		U\sqrt{1+U^2}=\frac{\sqrt{s}}{1-s}
        \qquad\text{and}\qquad
        \operatorname{arsinh}(U)=\log\!\big(U+\sqrt{1+U^2}\big)
		=\log\!\Big(\frac{1+\sqrt{s}}{\sqrt{1-s}}\Big),
	\]
	so that
	\[
		I_2(s)=4\Big(\frac{\sqrt{s}}{1-s}-\log\!\Big(\frac{1+\sqrt{s}}{\sqrt{1-s}}\Big)\Big).
	\]

	\medskip

	Substituting these identities into the formula from Theorem~\ref{thm:CellIntensity}
	yields
	\[
		\xi_1(\gamma,\lambda)
		=\frac{\gamma^2}{\lambda}
		\int_0^1 (1-s)^{-1/2}\,
		\Big(\frac{\sqrt{s}}{1-s}-\log\!\Big(\frac{1+\sqrt{s}}{\sqrt{1-s}}\Big)\Big)\,
		\exp\!\Big(
		-2\gamma\Big(\frac{\sqrt{s}}{\sqrt{1-s}}-\arcsin(\sqrt{s})\Big)
		\Big)\,\dint s,
	\]
	which simplifies to the claimed expression.
\end{proof}

\begin{proof}[Proof of Corollary~\ref{cor:lambda_asymptotics_d2}]
Setting $\gamma=\lambda$ in Corollary~\ref{cor:CellIntensity_d2}
and substituting $s=t^2/(1+t^2)$, we obtain
\[
    \xi_1(\lambda,\lambda)
    =2\lambda\int_0^\infty
    \left(
        \frac{t^2}{1+t^2}
        -\frac{t\,\operatorname{arsinh}(t)}{(1+t^2)^{3/2}}
    \right)e^{-2\lambda\psi(t)}\,\dint t,
\]
where $\psi(t):=t-\arctan(t)$. Since
$\psi'(t)=t^2/(1+t^2)$, $\psi(0)=0$ and
$\psi(t)\to\infty$ as $t\to\infty$, this gives
\[
    \xi_1(\lambda,\lambda)
    =1-2\lambda\int_0^\infty
    \frac{t\,\operatorname{arsinh}(t)}{(1+t^2)^{3/2}}
    e^{-2\lambda\psi(t)}\,\dint t.
\]
Integration by parts yields
\[
    \int_0^\infty
    \frac{t\,\operatorname{arsinh}(t)}{(1+t^2)^{3/2}}\,\dint t
    =
    \left[-\frac{\operatorname{arsinh}(t)}{\sqrt{1+t^2}}
    \right]_0^\infty
    +\int_0^\infty\frac{\dint t}{1+t^2}
    =\frac{\pi}{2}.
\]
As $\psi\geq0$, dominated convergence therefore implies
\[
    \xi_1(\lambda,\lambda)
    =1-\pi\lambda+o(\lambda),
    \qquad \lambda\downarrow0,
\]
which proves the assertion.
\end{proof}

\subsection{Proof of Theorem~\ref{thm:AsymptoticsGeneralD}}

We start by noting that by Theorem~\ref{thm:CellIntensity} we have
\begin{equation}\label{eq:CellIntensity1}
	\xi_{d-1}(\lambda^{d-1},\lambda) =\frac{\lambda^{(d-1)^2}}{2d}
	\int_0^1 (1-s)^{\frac{d-3}{2}}
	\exp\!\Big(
	-\lambda^{d-1}\frac{\kappa_{d-1}}{d+1}
	s^{\frac{d+1}{2}}
	{}_2F_1\!\Big(\tfrac{d+1}{2},\tfrac{d+1}{2};\tfrac{d+3}{2};s\Big)
	\Big)\,I_d(s)\,\dint s.
\end{equation}
Since the case $d=2$ was treated separately in Corollaries
\ref{cor:CellIntensity_d2} and \ref{cor:lambda_asymptotics_d2}, we may assume $d\ge 3$. As we already mentioned the integral $I_d(s)$ cannot in general be evaluated explicitly. At the same time it is easy to see that $I_d(s)\to\infty$ and ${}_2F_1\!\big(\tfrac{d+1}{2},\tfrac{d+1}{2};\tfrac{d+3}{2};s\big)\to\infty$ as $s\uparrow 1$. Hence, our first step is to determine the corresponding asymptotics and to control the corresponding error terms.

\paragraph{Step 1: Asymptotics for $I_d(s)$ as $s\uparrow 1$, part I.}
Recall that
\[
	I_d(s)
	=\int_{(\RR^{d-1})^d}
	\Delta_{d-1}(\bz_1,\ldots,\bz_d)
	\prod_{i=1}^d(1-\|\bz_i\|^2)^{-\frac{d+1}{2}}
	\,{\bf 1}\{\|\bz_i\|^2\le s\}
	\,\dint\bz_i.
\]
Using polar coordinates $\bz_i=r_i \bu_i$ with $\bu_i\in \SS^{d-2}$ and $r_i\in[0,\sqrt{s}]$, and setting $t=1-s$, we introduce for $i\in\{1,\ldots,d\}$ the change of variables $y_i = t^{-1} (1-r_i^2)$, which implies $\dint r_i=-\frac{t}{2\sqrt{1-ty_i}}\dint y_i$. This yields
\begin{equation*}
	I_d(1-t)
	=t^{-\frac{d(d-1)}{2}}2^{-d}
	\int_{(\SS^{d-2})^d}\int_{[1,1/t]^d}
	G(t,\by,\bu_1,\ldots,\bu_d)\,
	\dint \by\, \sigma(\dint \bu_1)\, \cdots \sigma(\dint \bu_d),
\end{equation*}
where $\dint\by = \dint y_1 \dots \dint y_d$ and the integrand is
\[
	G(t,\by,\bu_1,\ldots,\bu_d):=V(t,\by,\bu_1,\ldots,\bu_d)\prod_{i=1}^d y_i^{-\frac{d+1}{2}}(1-t y_i)^{\frac{d-3}{2}}
\]
with
\[
	V(t,\by,\bu_1,\ldots,\bu_d):=\Delta_{d-1}(\sqrt{1-t y_1}\bu_1,\dots,\sqrt{1-t y_d}\bu_d).
\]

Note that $V(t,\by,\bu_1,\ldots,\bu_d)\leq V_{\rm max}$, where $V_{\rm max}$ is the volume of the regular simplex with vertices on $\SS^{d-2}$. The exact value for the constant $V_{\rm max}$ can be found in \cite[Proposition 4.21]{KSTBook}. Hence, $G(t,\by,\bu_1,\ldots,\bu_d)\leq  V_{\rm max} \prod_{i=1}^d y_i^{-(d+1)/2}$ and by dominated convergence theorem the limit constant is defined by
\begin{align*}
	C_d & := \lim_{t\downarrow 0} t^{\frac{d(d-1)}{2}} I_d(1-t)                                                                                                                   \\
	    & =
	2^{-d} \int_{(\SS^{d-2})^d}\int_{[1,\infty)^d} \Delta_{d-1}(\bu_1,\ldots,\bu_d) \prod_{i=1}^d y_i^{-\frac{d+1}{2}} \,\dint \by\,\sigma(\dint\bu_1)\cdots \,\sigma(\dint\bu_d) \\
	    & =
	2^{-d}\Big(\int_1^\infty y^{-\frac{d+1}{2}}\,\dint y\Big)^d\int_{(\SS^{d-2})^d}\Delta_{d-1}(\bu_1,\ldots,\bu_d)\,\sigma(\dint\bu_1)\cdots\sigma(\dint\bu_d)                   \\
	    & =
	\frac{1}{(d-1)^d}\int_{(\SS^{d-2})^d}\Delta_{d-1}(\bu_1,\ldots,\bu_d)\,\sigma(\dint\bu_1)\cdots \,\sigma(\dint\bu_d).
\end{align*}
Up to the normalizing constant $\kappa_{d-1}^d$, the last expression is the expected volume of a random simplex with vertices uniformly distributed in the $(d-2)$-dimensional unit sphere. It can be evaluated using \cite[Theorem 4.12]{KSTBook} or \cite[Theorem~8.2.3]{SW}, which leads to
\begin{align}
	C_d = \frac{1}{(d-1)^d}\frac{2^{d}\,\pi^{\frac{d(d-1)-1}{2}}}{(d-1)!}\,
	\frac{\Gamma\!\left(\frac{(d-1)^2+1}{2}\right)}{\Gamma\!\left(\frac{(d-1)^2}{2}\right)}\,
	\frac{1}{\Gamma\!\left(\frac d2\right)^{d-1}}.\label{eq:Cd}
\end{align}
To derive the error term, consider the difference
\[
	\delta(t) := 2^d \left(t^{\frac{d(d-1)}{2}} I_d(1-t) - C_d \right).
\]
We split this difference into two parts: the error from the truncated domain and the error from the integrand approximation. Using the triangle inequality we find
\[
	|\delta(t)| \le \delta_1(t) + \delta_2(t)
\]
with
\begin{align*}
	\delta_1(t) & := \int_{(\SS^{d-2})^d}\int_{[1,\infty)^d \setminus [1,1/t]^d} G(0,\by,\bu_1,\ldots,\bu_d)\,\dint \by\,\sigma(\dint\bu_1)\cdots \sigma(\dint\bu_d),          \\
	\delta_2(t) & := \int_{(\SS^{d-2})^d}\int_{[1,1/t]^d} |G(t,\by,\bu_1,\ldots,\bu_d) - G(0,\by,\bu_1,\ldots,\bu_d)|\,\dint \by\,\sigma(\dint \bu_1)\cdots\sigma(\dint\bu_d).
\end{align*}

\paragraph{Step 2: The error term $\delta_1(t)$.} The domain $[1,\infty)^d \setminus [1,1/t]^d$ consists of points where at least one coordinate $y_k$, $k\in\{1,\ldots,d\}$, exceeds the value $1/t$. Since $G(0,\by,\bu_1,\dotsc,\bu_d) \le V_{\rm max} \prod_{i=1}^d y_i^{-(d+1)/2}$ and $\sigma(\SS^{d-2})=(d-1)\kappa_{d-1}$, a union bound over which coordinate $y_k$ exceeds $1/t$ gives
\begin{align*}
	\delta_1(t)
	 & \leq \sum_{k=1}^dV_{\rm max}\,\sigma(\SS^{d-2})^d\Big(\prod_{\substack{i=1        \\i\neq k}}^d\int_1^\infty y_i^{-\frac{d+1}{2}}\,\dint y_i\Big)\int_{1/t}^\infty y_k^{-\frac{d+1}{2}}\,\dint y_k \\
	 & = dV_{\rm max}\,\sigma(\SS^{d-2})^d\,\Big(\frac{2}{d-1}\Big)^d\,t^{\frac{d-1}{2}}
	= 2^dd\,V_{\rm max}\,\kappa_{d-1}^d\,t^{\frac{d-1}{2}}.
\end{align*}
For $d \ge 3$, we have $\frac{d-1}{2} \ge 1$, so $\delta_1(t)$ is $O(t)$.

\paragraph{Step 3: The error term $\delta_2(t)$.} On the domain $[1, 1/t]^d$, we have $t y_i \in [0,1]$.
Using for $0\leq x\leq 1$ the elementary inequality
\begin{equation}\label{eq:ElementaryInequality2}
	|(1-x)^\alpha - 1| \le \max\{1,\alpha\} x\qquad\text{for $\alpha > 0$}
\end{equation}
we can bound the difference in the integrand in $\delta_2(t)$.
The volume $\Delta_{d-1}(\bx_1,\ldots,\bx_d)$ is the absolute value of a polynomial function of the coordinates of $\bx_1,\ldots,\bx_d\in\BB^{d-1}$ and hence Lipschitz as a function of $(\bx_1,\ldots,\bx_d)$ for some Lipschitz constant $L=L(d)$. That is,
$$
	|\Delta_{d-1}(\bx_1,\ldots,\bx_d)-\Delta_{d-1}(\bx_1',\ldots,\bx_d')| \leq L\sum_{i=1}^d\|\bx_i-\bx_i'\|,\qquad \bx_1,\ldots,\bx_d,\bx_1',\ldots,\bx_d'\in\BB^{d-1}.
$$
In fact, one can take $L=\frac{2^{d-2}}{(d-2)!}$.
Applying this with $\bx_i=\sqrt{1-ty_i}\bu_i$ and $\bx_i'=\bu_i$, $i\in\{1,\ldots,d\}$, we get
\begin{align*}
	|V(t,\by,\bu_1,\dotsc,\bu_d) - V(0,\by,\bu_1,\dotsc,\bu_d)|
	\le
	L \sum_{i=1}^d |\sqrt{1-ty_i} - 1|
	\le
	L t \sum_{i=1}^d y_i,
\end{align*}
where we used \eqref{eq:ElementaryInequality2} with $\alpha=1/2$ in the last step.
Similarly, we can deal with the product term $P(t,\by) = \prod_{i=1}^d (1-ty_i)^{(d-3)/2}$, where we distinguish the cases $d=3$ and $d>3$.
\begin{itemize}
	\item If $d=3$, the exponent is 0, so $P(t,\by)=1$ and there is no error from this term.
	\item If $d > 3$, then
	      \begin{align*}
		      0\leq 1-P(t,\by)
		       & = 1-\prod_{i=1}^d (1-ty_i)^{(d-3)/2}
		      \leq \sum_{i=1}^d \Big(1-(1-ty_i)^{(d-3)/2}\Big)               \\
		       & \leq \max\left\{1,\frac{d-3}{2}\right\} t \sum_{i=1}^d y_i,
	      \end{align*}
	      where we used \eqref{eq:ElementaryInequality2} and the elementary inequality
	      \[
		      1-\prod_{i=1}^d a_i = (1-a_1) + \sum_{i=2}^d \left(\prod_{j=1}^{i-1} a_j\right) (1-a_i) \leq \sum_{i=1}^d (1-a_i)
	      \]
	      for $0\leq a_i\leq 1$ for all $i\in\{1,\ldots,d\}$.
\end{itemize}
Hence for every $d\geq 3$ we have
\[
	|P(t,\by) - 1| \le \max\left\{1,\frac{d-3}{2}\right\} t \sum_{i=1}^d y_i.
\]
Thus, the integrand in $\delta_2(t)$ is bounded by
\begin{align*}
	 & |G(t,\by,\bu_1,\ldots,\bu_d) - G(0,\by,\bu_1,\ldots,\bu_d)|                                                                                  \\
	 & \qquad \leq \left(\prod_{i=1}^d y_i^{-\frac{d+1}{2}}\right) \left(|V(t,\by,\bu_1,\ldots,\bu_d)P(t,\by) - V(0,\by,\bu_1,\dotsc,\bu_d)|\right) \\
	 & \qquad \leq \left(\prod_{i=1}^d y_i^{-\frac{d+1}{2}}\right) \left(|V(t,\by,\bu_1,\ldots,\bu_d)| |P(t,\by) - 1|
	+ |V(t,\by,\bu_1,\dotsc,\bu_d)-V(0,\by,\bu_1,\dotsc,\bu_d)|\right)                                                                              \\
	 & \qquad \le (V_{\max}\max\{1,\tfrac{d-3}{2}\}+ L) \, t \left(\sum_{k=1}^d y_k\right) \prod_{i=1}^d y_i^{-\frac{d+1}{2}}.
\end{align*}
Integrating this over $[1, 1/t]^d$ yields, for some constant $0<c<\infty$ only depending on $d$ the bound
\begin{align*}
	\delta_2(t)\leq c\,t\int_{[1,1/t]^d}\left(\sum_{k=1}^d y_k\right) \prod_{i=1}^d y_i^{-\frac{d+1}{2}}\,\dint\by=d\,c\,t\Big(\int_1^{1/t}y^{-\frac{d-1}{2}}\,\dint y\Big)\Big(\int_1^{1/t}y^{-\frac{d+1}{2}}\,\dint y\Big)^{d-1}.
\end{align*}
Now,
$$
	\int_1^{1/t}y^{-\frac{d+1}{2}}\,\dint y=\frac{2}{d-1}(1-t^{\frac{d-1}{2}})\qquad\text{and}\qquad\int_1^{1/t}y^{-\frac{d-1}{2}}\,\dint y = \begin{cases}
		\log\frac{1}{t},                    & d=3,     \\
		\frac{2}{d-3}(1-t^{\frac{d-3}{2}}), & d\geq 4,
	\end{cases}
$$
and it follows that
$$
	\delta_2(t) \leq \begin{cases}
		c'\,t\,\log\frac{1}{t}, & d=3,    \\
		c'\,t,                  & d\geq 4
	\end{cases}
$$
with another constant $0<c'<\infty$ only depending on $d$.

\paragraph{Step 4: Asymptotics for $I_d(s)$ as $s\uparrow 1$, part II.}
The definition of the error term $\delta(t)$ gives
\begin{equation}\label{eq:Id_asymptotic}
	I_d(1-t)= C_d\,t^{-\frac{d(d-1)}{2}}\Big(1+R_I(t)\Big),
\end{equation}
where $R_I(t) := \frac{\delta(t)}{C_d2^d}$.
Combining the error estimates for $\delta_1(t)$ and $\delta_2(t)$ developed in the two previous steps, we obtain the asymptotic
$R_I(t) = O(t)$ for $d > 3$ and $R_I(t) = O(t \log(1/t))$ for $d=3$ as $t\downarrow 0$. In all dimensions $d \ge 3$, $R_I(t) \to 0$ as $t \to 0$.

\paragraph{Step 5: Asymptotics for the hypergeometric function.}

Regarding the hypergeometric function in Theorem~\ref{thm:AsymptoticsGeneralD}, we use the Euler transformation
\[
	{}_2F_1(a,b;c;s)
	=
	(1-s)^{c-a-b}
		{}_2F_1(c-a,c-b;c;s),
\]
see \cite[Equation~15.8.1]{NIST}. With $a=b=\frac{d+1}{2}$ and $c=\frac{d+3}{2}$ this gives
\[
	{}_2F_1\!\Big(\tfrac{d+1}{2},\tfrac{d+1}{2};
	\tfrac{d+3}{2};s\Big)
	=
	(1-s)^{-\frac{d-1}{2}}
		{}_2F_1\!\Big(1,1;\tfrac{d+3}{2};s\Big).
\]
By Gauss' summation formula \cite[Equation~15.4.20]{NIST} at $s=1$, ${}_2F_1(1,1;\tfrac{d+3}{2};1)=\frac{d+1}{d-1}$.
Thus, for all $d\ge3$,
\[
	{}_2F_1\!\Big(\tfrac{d+1}{2},\tfrac{d+1}{2};
	\tfrac{d+3}{2};s\Big)
	=
	\frac{d+1}{d-1}\,
	(1-s)^{-\frac{d-1}{2}}(1+o(1)),
	\qquad s\uparrow1.
\]

For the error term one has to distinguish $d=3$ from $d\ge4$. If
$d=3$, then
\[
	{}_2F_1(2,2;3;s)
	=
	\frac{2}{s^2}
	\left(
	\frac{1}{1-s}+\log(1-s)-1
	\right),
\]
which follows from
${}_2F_1(1,1;2;s)=-\frac{1}{s}\log(1-s)$ and the derivative formula
\cite[Equations~15.4.1 and~15.5.1]{NIST}. Hence
\begin{equation}\label{eq:HypergeomAsympt1}
	{}_2F_1(2,2;3;s)
	=
	2(1-s)^{-1}
	\Big(
	1
	+
	(1-s)\big(\log(1-s)+1\big)
	+
	O\big((1-s)^2|\log(1-s)|\big)
	\Big),
	\qquad s\uparrow1.
\end{equation}

If $d\ge4$, then
${}_2F_1(1,1;\tfrac{d+3}{2};s)$ is differentiable at $s=1$, again by
Gauss' summation formula applied after differentiation. Therefore
\[
	{}_2F_1\!\Big(1,1;\tfrac{d+3}{2};s\Big)
	=
	\frac{d+1}{d-1}
	-
	\frac{2(d+1)}{(d-1)(d-3)}(1-s)
	+o(1-s),
	\qquad s\uparrow1.
\]
Combining this expansion with the Euler transformation above and factoring
out \((d+1)/(d-1)\), we obtain, for \(d\ge4\),
\begin{equation}\label{eq:HypergeomAsympt2}
	{}_2F_1\!\Big(\tfrac{d+1}{2},\tfrac{d+1}{2};
	\tfrac{d+3}{2};s\Big)
	=
	\frac{d+1}{d-1}
	(1-s)^{-\frac{d-1}{2}}
	\left(
	1-\frac{2}{d-3}(1-s)+o(1-s)
	\right),
	\qquad s\uparrow1.
\end{equation}

\paragraph{Step 6: Global change of variables and limit transition.}
We substitute $s = 1 - \lambda^2 x$ in \eqref{eq:CellIntensity1}. Then the integration domain transforms from $s \in (0,1)$ to $x \in (0, \lambda^{-2})$ and the cell intensity becomes
\[
	\xi_{d-1}(\lambda^{d-1}, \lambda) = \frac{\lambda^{(d-1)^2}}{2d} \int_0^{\lambda^{-2}} (\lambda^2 x)^{\frac{d-3}{2}} \exp\!\Big( - \lambda^{d-1} A(1-\lambda^2 x) \Big) \, I_d(1-\lambda^2 x) \, \lambda^2 \dint x,
\]
where
\begin{equation}\label{eq:A}
	A(s) := \frac{\kappa_{d-1}}{d+1} s^{\frac{d+1}{2}}
		{}_2F_1\!\Big(\tfrac{d+1}{2},\tfrac{d+1}{2};\tfrac{d+3}{2};s\Big).
\end{equation}
We now insert the explicit asymptotic forms derived in Steps 4 and 5. Recall from \eqref{eq:Id_asymptotic}, \eqref{eq:HypergeomAsympt1} and \eqref{eq:HypergeomAsympt2} that for $t \in (0,1)$,
\begin{align}
	I_d(1-t) & = C_d \, t^{-\frac{d(d-1)}{2}} \big(1 + R_I(t)\big),  \notag                                                                   \\
	A(1-t)   & = B_d \, t^{-\frac{d-1}{2}} \big(1 + R_A(t)\big), \qquad \text{with}\qquad B_d = \frac{\kappa_{d-1}}{d-1},\label{eq:Aestimate}
\end{align}
where the error terms satisfy $R_I(t) \to 0$ and $R_A(t) \to 0$ as $t \downarrow 0$. Moreover, the explicit form of the error bounds we derived ensures that $R_I$ and $R_A$ are bounded on the interval $(0,1)$.

After the substitution $t=\lambda^2 x$, the powers of $\lambda$ cancel.
Indeed, the prefactor contributes the factor $\lambda^{(d-1)^2}$. The term
$(1-s)^{(d-3)/2}$, with $1-s=\lambda^2 x$, contributes
$\lambda^{d-3}$. The differential contributes another factor
$\lambda^2$, while the asymptotic
\[
	I_d(1-\lambda^2 x)
	=
	C_d(\lambda^2 x)^{-\frac{d(d-1)}2}(1+o(1))
\]
contributes $\lambda^{-d(d-1)}$. Hence the total power of $\lambda$ is $(d-1)^2+(d-3)+2-d(d-1)=0$.
The cell intensity can thus be written explicitly in terms of the error functions $R_I$ and $R_A$ as
\begin{equation}\label{eq:CellIntensityIntegral}
	\xi_{d-1}(\lambda^{d-1}, \lambda) = \int_0^{\infty} f_\lambda(x) \,\dint x
\end{equation}
with
\[
	f_\lambda(x) := \frac{C_d}{2d} \, x^{-1 - \frac{(d-1)^2}{2}} \big(1 + R_I(\lambda^2 x)\big) \, \exp\!\Big( - B_d x^{-\frac{d-1}{2}} \big(1 + R_A(\lambda^2 x)\big) \Big) \, {\bf 1}_{(0, \lambda^{-2})}(x).
\]

For $\lambda\downarrow0$ the integrand converges pointwise for all $x\in(0,\infty)$, that is,
\[
	\lim_{\lambda \downarrow 0} f_\lambda(x) = \frac{C_d}{2d} \, x^{-1 - \frac{(d-1)^2}{2}} \exp\!\Big( - B_d x^{-\frac{d-1}{2}} \Big).
\]
Next, we take the limit on both sides of \eqref{eq:CellIntensityIntegral} as $\lambda\downarrow0$. To justify the exchange of limit and integration on the right-hand side we need to construct an integrable majorant using the properties of $R_I$ and $R_A$.

\begin{itemize}
	\item \textit{The polynomial factor:}
	      Since $|R_I(t)|=\frac{|\delta(t)|}{2^d C_d}\leq \frac{\delta_1(t)+\delta_2(t)}{2^d C_d}$ we can use the bounds established in Step 2 and Step 3 to conclude that
	      \[
		      |1 + R_I(\lambda^2 x)| \le c'' \quad \text{for all } \lambda^2 x \in (0,1),
	      \]
	      for some constant $0<c''<\infty$ only depending on $d$.

	\item \textit{The exponential factor:} Since $R_A(t) \to 0$ for $t\downarrow 0$, we can choose $t_0>0$ small enough such that for all $t\in (0,t_0)$ we have $|R_A(t)| \le 1/2$. Therefore,
	      \[
		      1 + R_A(t) \ge \frac{1}{2} \qquad \text{for all $t\in(0,t_0)$.}
	      \]
	      This implies that the exponential term is bounded by
	      \[
		      \exp\!\Big( - B_d x^{-\frac{d-1}{2}} (1 + R_A(\lambda^2 x)) \Big) \le \exp\!\Big( - \frac{B_d}{2} x^{-\frac{d-1}{2}} \Big),
	      \]
	      for all $\lambda^2 x \in (0,t_0)$. Furthermore, for $x\geq \frac{t_0}{\lambda^2}$, we can use the bound
	      \begin{equation*}
		      \exp\!\Big( - B_d x^{-\frac{d-1}{2}} (1 + R_A(\lambda^2 x)) \Big) \leq c''',
	      \end{equation*}
	      for some constant $0<c'''<\infty$ only depending on $d$.
\end{itemize}

Combining these bounds, the integrand $f_\lambda$ is dominated in absolute value by
\begin{align*}
	|f_\lambda(x)| & \le \begin{cases}
		                     \frac{C_d \,c''}{2d} \, x^{-1 - \frac{(d-1)^2}{2}} \exp\!\Big( - \frac{B_d}{2} x^{-\frac{d-1}{2}} \Big), & x \in (0,t_0/\lambda^2),   \\
		                     \frac{C_d c'''}{2d} x^{-1-\frac{(d-1)^2}{2}},                                                            & x\in[t_0/\lambda^2,\infty)
	                     \end{cases} \\[0.25cm]
	               & \leq \frac{C_d\max\{c'',c'''\}}{2d} x^{-1-\frac{(d-1)^2}{2}}\Big(\exp\!\Big( - \frac{B_d}{2} x^{-\frac{d-1}{2}} \Big) + {\bf 1}\{x\geq 1\}\Big),
\end{align*}
provided $0<\lambda^2<t_0$, which we can assume without loss of generality. This function is integrable on $(0, \infty)$ because the exponent of $x$ is $<-1$ (ensuring convergence at infinity) and the exponential term ensures rapid decay at $x=0$. Thus, by the dominated convergence theorem,
\[
	\lim_{\lambda \to 0} \xi_{d-1}(\lambda^{d-1}, \lambda) = \int_0^\infty \lim_{\lambda \to 0} f_\lambda(x) \, \dint x = \frac{C_d}{2d} \int_0^\infty x^{-1 - \frac{(d-1)^2}{2}} \exp\!\Big( - B_d x^{-\frac{d-1}{2}} \Big) \, \dint x.
\]

\paragraph{Step 7: Evaluation of the integral and final simplification.}
It remains to compute the integral
\[
	J := \int_0^\infty x^{-1 - \frac{(d-1)^2}{2}} \exp\!\Big( - B_d x^{-\frac{d-1}{2}} \Big) \, \dint x,
\]
where we recall that $B_d = \frac{\kappa_{d-1}}{d-1}$. We use the substitution $u = x^{-\frac{d-1}{2}}$, which yields
\begin{align*}
	J & = \int_0^\infty u^{\frac{2}{d-1} + (d-1)} e^{-B_du} \cdot \frac{2}{d-1} u^{-\frac{2}{d-1}-1} \, \dint u
	= \frac{2}{d-1} \int_0^\infty u^{(d-1)-1} e^{-B_du} \, \dint u                                              \\
	  & = \frac{2}{d-1} B_d^{-(d-1)} \Gamma(d-1) = 2 \kappa_{d-1}^{-(d-1)} (d-1)^{d-2} \Gamma(d-1).
\end{align*}
Finally, we combine this with the prefactor $\frac{C_d}{2d}$ from Step 6 to obtain the limit
\[
	\lim_{\lambda\downarrow 0} \xi_{d-1}(\lambda^{d-1},\lambda)
	= \frac{C_d}{2d} \cdot J
	= \frac{C_d}{d} \Gamma(d-1) \frac{(d-1)^{d-2}}{\kappa_{d-1}^{d-1}}.
\]
We now insert the explicit value of $C_d$ in \eqref{eq:Cd}, express $\kappa_{d-1}$ in terms of a Gamma function and simplify. This gives the final result
\[
	\lim_{\lambda\downarrow 0} \xi_{d-1}(\lambda^{d-1},\lambda)
	= \frac{2^d}{d(d-1)^3} \pi^{\frac{d-2}{2}} \frac{\Gamma(\frac{(d-1)^2+1}{2})}{\Gamma(\frac{(d-1)^2}{2})} \left( \frac{\Gamma(\frac{d+1}{2})}{\Gamma(\frac{d}{2})} \right)^{d-1},
\]
and completes the proof.\qed

\subsection{Proofs of Theorem~\ref{thm:SkeletonConvergence}, Theorem~\ref{thm:LocalVolume} and its corollaries}

\begin{proof}[Proof of Theorem~\ref{thm:SkeletonConvergence}]
	The result is a consequence of \cite[Theorem~3.6 and
		Theorem~3.8]{GiWL26}. More precisely, let for each $n\in\NN$, $\eta_n$ be a Poisson point
	process on $\RR^{d-1}\times[0,\infty)$ with intensity measure of the
	form
	\[
		\Lambda_n(\cdot)
		=
		\int_{\RR^{d-1}}\int_0^\infty
		f_n(h)\,{\bf 1}\{(\bv,h)\in\cdot\}\,\dint h\,\dint\bv,
	\]
	where $(f_n)_{n\in\NN}$ is a sequence of locally integrable functions $f_n:[0,\infty)\to[0,\infty)$
	satisfying
	\begin{equation}\label{eq:Condition}
		\lim_{n\to\infty}\int_0^x f_n(h)\,\dint h
		=
		c\in(0,\infty)
		\qquad\text{for all }x>0 .
	\end{equation}
	If $\cL^*(\eta_n)$ denotes the dual Poisson--Laguerre tessellation
	induced by $\eta_n$, then the distribution of the skeleton of
	$\cL^*(\eta_n)$ converges weakly to the distribution of the skeleton of
	the classical Poisson--Delaunay tessellation of intensity $c$, by
	\cite[Theorem~3.8]{GiWL26}. Moreover, the convergence holds in the
	stronger local sense by \cite[Theorem~3.6]{GiWL26}.

	Hence, by Theorem~\ref{thm:DlambdaIsTessellation} and
	\eqref{eq:IntensityEta}, it remains to check \eqref{eq:Condition} for
	the height densities arising from our model. Let
	$(\lambda_n)_{n\in\NN}$ be a sequence of positive real numbers with
	$\lambda_n\downarrow0$. We shift the transformed process vertically by
	$-\lambda_n^2$. This does not change the induced dual Laguerre
	tessellation, since a common vertical shift of all points only shifts the
	corresponding paraboloids vertically and leaves their spatial apexes and
	empty-paraboloid relations unchanged. After this shift the height density
	is
	\begin{equation}\label{eq:fnh}
		f_n(h)
		=
		\frac{1}{2}\lambda_n^{d-1}
		(h+\lambda_n^2)^{-\frac{d+1}{2}}
		{\bf 1}\{h\ge0\}.
	\end{equation}
	For every $x>0$, we have, as $n\to\infty$,
	\begin{align*}
		\int_0^x f_n(h)\,\dint h
		 & =
		\frac{1}{2}\lambda_n^{d-1}
		\int_0^x
		(h+\lambda_n^2)^{-\frac{d+1}{2}}\,\dint h  =
		\frac{1}{d-1}
		\Big[
		1-
		\Big(\frac{\lambda_n^2}{x+\lambda_n^2}\Big)^{\frac{d-1}{2}}
		\Big]
		\longrightarrow
		\frac{1}{d-1}.
	\end{align*}
	Thus \eqref{eq:Condition} holds with $c=\frac{1}{d-1}$, and the cited
	convergence theorem for Poisson--Laguerre tessellations yields the
	claim.
\end{proof}

\begin{proof}[Proof of Corollary~\ref{cor:AllFaceIntensities}] 
Consider a stationary, locally finite, face-to-face simplicial tessellation $\cT$ of $\RR^{d-1}$ with positive and finite cell intensity. For $k\in\{0,\ldots,d-1\}$ we denote by $\xi_k(\cT)$ the intensity of $k$-faces and let $Z_{\cT}$ denote the typical cell of $\cT$. From \cite[Theorem~10.1.3]{SW} it follows that
\begin{equation}\label{eq:FaceIntensityAngleIdentity}
    \xi_k(\cT)
    =\xi_{d-1}(\cT)\,
    \EE\sigma_{k+1}(Z_{\cT}).
\end{equation}
Moreover, a
$(d-1)$-dimensional simplex has exactly $\binom{d}{k+1}$
faces of dimension $k$, and consequently
\begin{equation}\label{eq:SimplexAngleSumBound}
    0\leq\sigma_{k+1}(P)\leq\binom{d}{k+1}.
\end{equation}
Together with \eqref{eq:FaceIntensityAngleIdentity}, this implies that $\xi_k(\cT)$ is finite for all $k\in\{0,\ldots,d-1\}$.

We also note that for each $k\in\{0,\ldots,d-1\}$ the functional $\sigma_{k+1}$ is continuous on the space
of nondegenerate $(d-1)$-dimensional simplices equipped with
the Hausdorff metric. Indeed, the tangent cones at corresponding faces in the definition of $\sigma_{k+1}$ converge
in the conic Hausdorff metric and their solid angles depend
continuously on the cones, see \cite[Proposition~8.2(2)]{MT14}.

We now apply relation \eqref{eq:FaceIntensityAngleIdentity} to
$\cD_{\lambda^{d-1},\lambda}$ and to
$\cD_{1/(d-1)}$. By Corollary~\ref{cor:TypicalCellConvergence}, $Z_\lambda$ converges in distribution to $Z^{\operatorname{PD}}_{1/(d-1)}$ as $\lambda\downarrow0$, and the limiting cell is almost surely a nondegenerate
$(d-1)$-dimensional simplex. The continuous mapping theorem
therefore implies
\[
    \sigma_{k+1}(Z_\lambda)
    \ \xrightarrow{\mathrm d}\
    \sigma_{k+1}\bigl(Z^{\operatorname{PD}}_{1/(d-1)}\bigr),\qquad\lambda\downarrow0.
\]
By \eqref{eq:SimplexAngleSumBound}, these random variables
all take values in the same compact interval. Weak
convergence consequently implies that $\EE\sigma_{k+1}(Z_\lambda)$ converges to $\EE\sigma_{k+1}\bigl(Z^{\operatorname{PD}}_{1/(d-1)}\bigr)$ as $\lambda\downarrow0$.
Combining this with
Theorem~\ref{thm:AsymptoticsGeneralD} and
\eqref{eq:FaceIntensityAngleIdentity}, we obtain
\[
\begin{aligned}
    &\xi_k(\lambda^{d-1},\lambda)
    =
    \xi_{d-1}(\lambda^{d-1},\lambda)\,
    \EE\sigma_{k+1}(Z_\lambda)\\
    &\qquad\longrightarrow
    \xi_{d-1}^{\operatorname{PD}}
    \Big(\frac{1}{d-1}\Big)\,
    \EE\sigma_{k+1}
    \bigl(Z^{\operatorname{PD}}_{1/(d-1)}\bigr)
    =\xi_k^{\operatorname{PD}}
    \Big(\frac{1}{d-1}\Big),\qquad\lambda\downarrow0.
\end{aligned}
\]
This proves the assertion.
\end{proof}

\begin{proof}[Proof of Theorem~\ref{thm:LocalVolume}]
	Fix $\ell\in\{0,1\}$. We use the same parametrization as in the proof
	of Theorem~\ref{thm:CellIntensity}. Let
	$(\bv_1,h_1),\ldots,(\bv_d,h_d)$ generate a cell and let $\Pi_{(\bw,t^2)}=\{(\bv,h):h\le t^2-\|\bv-\bw\|^2\}$ be the corresponding empty paraboloid. In the upper halfspace model, the
	associated facet is contained in the hemisphere $y(\bv)=\sqrt{t^2-\|\bv-\bw\|^2}$.

	The contribution of the projected simplex
	$[\bv_1,\ldots,\bv_d]$ to the numerator of
	$F_{\gamma,\lambda}^{(\ell)}(W)$ can be written simultaneously as
	\[
		(d-1)^{\ell-1}
		\int_{[\bv_1,\ldots,\bv_d]\cap W}
		t^\ell
		\bigl(t^2-\|\bv-\bw\|^2\bigr)^{-\frac{d-1+\ell}{2}}
		\,\dint\bv.
	\]
	For $\ell=0$, this follows by integrating the hyperbolic volume element
	$y^{-d}\,\dint\bv\,\dint y$ above the hemisphere. For $\ell=1$, we use the Riemannian area element induced on the
	supporting hemisphere. More precisely, consider the graph parametrization $\varphi(\bv):=(\bv,y(\bv))$.
	Since the hyperbolic metric in the upper halfspace model is given by \eqref{eq:HyperbolicMetric}
	the metric induced on the graph is represented by the matrix
	\[
		y(\bv)^{-2}
		\left(
		I_{d-1}+\nabla y(\bv)\nabla y(\bv)^{\mathsf T}
		\right),
	\]
	where $I_{d-1}$ is the $(d-1)\times(d-1)$ identity matrix.
	Using $\det\bigl(I_{d-1}+aa^{\mathsf T}\bigr)=1+\|a\|^2$
	the corresponding hyperbolic $(d-1)$-dimensional area element is
	\[
		y(\bv)^{-(d-1)}
		\sqrt{1+\|\nabla y(\bv)\|^2}\,\dint\bv.
	\]
	For the hemisphere under consideration we have $\nabla y(\bv)=-\frac{\bv-\bw}{y(\bv)}$, and hence
	\[
		1+\|\nabla y(\bv)\|^2
		=
		1+\frac{\|\bv-\bw\|^2}{y(\bv)^2}
		=
		\frac{t^2}{y(\bv)^2},
	\]
	where we used
	$y(\bv)^2+\|\bv-\bw\|^2=t^2$. Consequently,
	\[
		y(\bv)^{-(d-1)}
		\sqrt{1+\|\nabla y(\bv)\|^2}\,\dint\bv
		=
		t\,y(\bv)^{-d}\,\dint\bv
		=
		t\bigl(t^2-\|\bv-\bw\|^2\bigr)^{-d/2}\,\dint\bv.
	\]
	Therefore, the contribution of the facet above
	$[\bv_1,\ldots,\bv_d]\cap W$ to the local surface area is
	\[
		t\int_{[\bv_1,\ldots,\bv_d]\cap W}
		\bigl(t^2-\|\bv-\bw\|^2\bigr)^{-d/2}\,\dint\bv.
	\]

	We now apply the multivariate Mecke equation and the transformation $G$
	from the proof of Theorem~\ref{thm:CellIntensity}. After putting
	$\bv=\bw+t\bu$, the powers of $t$ in the preceding contribution
	cancel and it becomes
	\[
		(d-1)^{\ell-1}
		\int_{[\bz_1,\ldots,\bz_d]}
		{\bf 1}\{\bw+t\bu\in W\}
		(1-\|\bu\|^2)^{-\frac{d-1+\ell}{2}}
		\,\dint\bu.
	\]
	Integration with respect to $\bw$ gives the factor
	$\vol_{\RR^{d-1}}(W)$. Dividing by
	$\vol_{\HH^d}(C_\lambda(W))$ if $\ell=0$ and by
	$\cH_{\HH^d}^{d-1}(W_\lambda)$ if $\ell=1$, and repeating the steps of the proof of Theorem \ref{thm:CellIntensity}, we obtain
	\begin{align*}
		\EE F_{\gamma,\lambda}^{(\ell)}(W)
		 & =
		\frac{\gamma^d\lambda^{d-1}}{d}
		\int_\lambda^\infty
		t^{-d}
		\exp\bigl(
		-\Lambda_{\gamma,\lambda}(\Pi_{(o,t^2)})
		\bigr)         \\
		 & \quad\times
		\int_{(\RR^{d-1})^d}
		\Delta_{d-1}(\bz_1,\ldots,\bz_d)\left(
		\int_{[\bz_1,\ldots,\bz_d]}
			(1-\|\bu\|^2)^{-\frac{d-1+\ell}{2}}
		\,\dint\bu
		\right)
		\\
		 & \quad\times
		\prod_{i=1}^d
		(1-\|\bz_i\|^2)^{-\frac{d+1}{2}}
			{\bf 1}\Bigl\{
		1-\|\bz_i\|^2\ge\frac{\lambda^2}{t^2}
		\Bigr\}\,\dint\bz_i\,\dint t .
	\end{align*}

	The void probability is the same as in the proof of
	Theorem~\ref{thm:CellIntensity}. Finally, with $s=1-\frac{\lambda^2}{t^2}$, $t=\frac{\lambda}{\sqrt{1-s}}$
	we have
	\[
		t^{-d}\,\dint t
		=
		\frac{\lambda^{1-d}}{2}
		(1-s)^{\frac{d-3}{2}}\,\dint s
		\qquad\text{and}\qquad
		{\bf 1}\Bigl\{
		1-\|\bz_i\|^2\ge\frac{\lambda^2}{t^2}
		\Bigr\}
		=
		{\bf 1}\{\|\bz_i\|^2\le s\}.
	\]
	Substitution gives the asserted formula with $J_{d,\ell}(s)$.
\end{proof}

\begin{proof}[Proof of Corollary~\ref{cor:Efron}]
	Since the transformation
	$T(\bv,y)=(\bv,y^2)$ leaves the spatial coordinate unchanged, the vertices
	of $\cD_{\gamma,\lambda}$ may equivalently be counted through the points
	of $\eta_{\gamma,\lambda}$. By the general-position property of the
	Poisson point process, a point $(\bv,y)\in\eta_{\gamma,\lambda}$ gives rise to a
	vertex of $\cD_{\gamma,\lambda}$ if and only if it is an extreme point of
	the geodesic convex hull $K_{\gamma,\lambda}$. Hence, by \eqref{eqn:k-intensity},
	\[
		\xi_0(\gamma,\lambda)
		=
		\frac{1}{\vol_{\RR^{d-1}}(W)}\,
		\EE\sum_{(\bv,y)\in\eta_{\gamma,\lambda}}
		{\bf 1}\{\bv\in W\}
		{\bf 1}\big\{
		(\bv,y)\notin
		\conv\big(\eta_{\gamma,\lambda}\setminus\{(\bv,y)\}\big)
		\big\}.
	\]
	Applying the Mecke equation to the Poisson point process
	$\eta_{\gamma,\lambda}$ gives
	\[
		\begin{aligned}
			\xi_0(\gamma,\lambda)
			 & =
			\frac{\gamma}{\vol_{\RR^{d-1}}(W)}\,
			\int_{C_\lambda(W)}
			\PP((\bv,y)\notin K_{\gamma,\lambda})
			\,\vol_{\HH^d}(\dint(\bv,y)).
		\end{aligned}
	\]
	Indeed, after removing the distinguished point in the sum, the remaining
	process has the same distribution as $\eta_{\gamma,\lambda}$, and an
	inserted point $\bx$ is an extreme point of
	$\conv(\eta_{\gamma,\lambda}\cup\{\bx\})$ precisely when
	$\bx\notin K_{\gamma,\lambda}$. Finally, Tonelli's theorem yields
	\[
		\begin{aligned}
			\xi_0(\gamma,\lambda)
			 & =
			\frac{\gamma}{\vol_{\RR^{d-1}}(W)}\,
			\EE\int_{C_\lambda(W)}
			{\bf 1}\big\{
			(\bv,y)\notin K_{\gamma,\lambda}
			\big\}
			\,\vol_{\HH^d}(\dint(\bv,y)) \\
			 & =
			\frac{\gamma}{\vol_{\RR^{d-1}}(W)}\,
			\EE\vol_{\HH^d}\big(
			(B_\lambda^\infty\setminus K_{\gamma,\lambda})
			\cap C_\lambda(W)
			\big),
		\end{aligned}
	\]
	which proves the claim by \eqref{eqn:VolumeCylinder}.
\end{proof}

\begin{proof}[Proof of Corollary~\ref{cor:LocalVolumeAsymptotics}]
	We only indicate the changes compared with the proof of
	Theorem~\ref{thm:AsymptoticsGeneralD}.
	We first consider the case \(d=2\) and \(\ell=0\). Since
	\(\cD_{\gamma,\gamma}\) is a stationary tessellation of \(\RR\) into
	intervals, its vertex and cell intensities coincide. Hence, by
	Corollary~\ref{cor:lambda_asymptotics_d2},
	\[
		\xi_0(\gamma,\gamma)
		=
		\xi_1(\gamma,\gamma)
		=
		1-\pi\gamma+o(\gamma),
		\qquad \gamma\downarrow0.
	\]
	On the other hand, Corollary~\ref{cor:Efron} and
	\(\vol_{\HH^2}(C_\gamma([0,1]))=1/\gamma\) give
	\[
		\xi_0(\gamma,\gamma)
		=
		1-\gamma\,
		\EE\vol_{\HH^2}\bigl(
		K_{\gamma,\gamma}\cap C_\gamma([0,1])
		\bigr).
	\]
	Consequently, $\EE\vol_{\HH^2}\bigl(
		K_{\gamma,\gamma}\cap C_\gamma([0,1])
		\bigr)=\pi+o(1)$,
	and therefore $\EE V_{\gamma,\gamma}([0,1])=\pi\gamma+o(\gamma)$.
	Since \(\EE V_{\gamma,\lambda}(W)\) is independent of \(W\) and
	\(\lambda\) by Theorem~\ref{thm:LocalVolume}, and since
	\(C_{2,0}=4\pi\) by \eqref{eq:C20}, this is precisely
	\[
		\EE V_{\gamma,\lambda}(W)
		=
		C_{2,0}\,
		\frac{\gamma}{4}
		+
		o(\gamma),
	\]
	as asserted. We may therefore assume \(d\ge3\) in what follows, and fix \(\ell\in\{0,1\}\).

	By Theorem~\ref{thm:LocalVolume} and
	\eqref{eqn:VolumeCylinder}, we have
	\begin{align}
		\frac{1}{\gamma}\EE F_{\gamma,\lambda}^{(\ell)}(W)
		 & =
		\frac{\gamma^{d-1}}{2d}
		\int_0^1
		(1-s)^{\frac{d-3}{2}}
		\exp\bigl(-\gamma A(s)\bigr)
		J_{d,\ell}(s)\,\dint s,
		\label{eq:LocalFunctionalAsymptotics}
	\end{align}
	where $A(s)$ is defined by \eqref{eq:A} and by  \eqref{eq:Aestimate} we have
	\[
		A(1-t)
		=
		\frac{\kappa_{d-1}}{d-1}\,
		t^{-\frac{d-1}{2}}\big(1 + R_A(t)\big),
		\qquad t\downarrow0,
	\]
	with $R_A(t) \to 0$ as $t \downarrow 0$ and $R_A$ bounded on $(0,1)$.

	It remains to determine the asymptotic behaviour of
	$J_{d,\ell}(s)$. We use the same polar substitution as in the proof of
	Theorem~\ref{thm:AsymptoticsGeneralD}, namely
	\[
		\bz_i=\sqrt{1-t y_i}\,\bu_i,
		\qquad
		\bu_i\in\SS^{d-2},
		\qquad
		y_i\in[1,1/t].
	\]
	The only additional factor, compared with $I_d(1-t)$, is
	\begin{equation}\label{eq:Auxiliary}
		\int_{
			[\sqrt{1-t y_1}\bu_1,\ldots,
					\sqrt{1-t y_d}\bu_d]}
		(1-\|\bu\|^2)^{-\frac{d-1+\ell}{2}}
		\,\dint\bu.
	\end{equation}
	This factor admits the following geometric interpretation. For
	affinely independent $\bz_1,\ldots,\bz_d\in\inter\BB^{d-1}$ put
	\[
		\bp_i
		:=
		\bigl(\bz_i,\sqrt{1-\|\bz_i\|^2}\bigr)\in\HH^d,
		\qquad i\in\{1,\ldots,d\}.
	\]
	These points lie on the Euclidean unit hemisphere
	$\Sigma:=\{(\bu,y)\in\HH^d:\|\bu\|^2+y^2=1\}$, which represents a
	totally geodesic hypersurface in the upper halfspace model. Let
	\[
		F(\bz_1,\ldots,\bz_d)
		:=
		\left\{
		\bigl(\bu,\sqrt{1-\|\bu\|^2}\bigr):
		\bu\in[\bz_1,\ldots,\bz_d]
		\right\}
	\]
	be the hyperbolic $(d-1)$-simplex in $\Sigma$ with vertices
	$\bp_1,\ldots,\bp_d$, and let
	$S(\bz_1,\ldots,\bz_d)$ be the hyperbolic $d$-simplex with facet
	$F(\bz_1,\ldots,\bz_d)$ and additional ideal vertex $\infty$, that is,
	\[
		S(\bz_1,\ldots,\bz_d)
		=
		\left\{
		(\bu,y)\in\HH^d:
		\bu\in[\bz_1,\ldots,\bz_d],\
		y\ge\sqrt{1-\|\bu\|^2}
		\right\}.
	\]
	Using the hyperbolic volume representation
	\eqref{eq:HyperbolicVolume}, we obtain
	\[
		\vol_{\HH^d}\bigl(S(\bz_1,\ldots,\bz_d)\bigr)
		=
		\frac{1}{d-1}
		\int_{[\bz_1,\ldots,\bz_d]}
		(1-\|\bu\|^2)^{-\frac{d-1}{2}}\,\dint\bu,
	\]
	while the computation of the hyperbolic area element in the proof of
	Theorem~\ref{thm:LocalVolume}, applied with $\bw=o$ and $t=1$,
	yields
	\[
		\cH_{\HH^d}^{d-1}\bigl(F(\bz_1,\ldots,\bz_d)\bigr)
		=
		\int_{[\bz_1,\ldots,\bz_d]}
		(1-\|\bu\|^2)^{-d/2}\,\dint\bu.
	\]
	Consequently, for $\bz_i=\sqrt{1-ty_i}\,\bu_i$ \eqref{eq:Auxiliary}
	equals
	\[
		\begin{cases}
			(d-1)\vol_{\HH^d}\bigl(S(\bz_1,\ldots,\bz_d)\bigr),
			 & \ell=0, \\[1mm]
			\cH_{\HH^d}^{d-1}\bigl(F(\bz_1,\ldots,\bz_d)\bigr),
			 & \ell=1.
		\end{cases}
	\]
	As $t\downarrow0$, the vertices $\bz_i=\sqrt{1-ty_i}\,\bu_i$
	converge to $\bu_i\in\SS^{d-2}$, the simplex
	$S(\bz_1,\ldots,\bz_d)$ converges to the ideal simplex
	$S^\infty(\bu_1,\ldots,\bu_d)$, and the facet
	$F(\bz_1,\ldots,\bz_d)$ converges to the ideal
	$(d-1)$-simplex $F^\infty(\bu_1,\ldots,\bu_d)$ opposite the
	vertex $\infty$. Hence, for almost every
	$(\bu_1,\ldots,\bu_d)$, this factor converges as
	$t\downarrow0$ to $(d-1)^{1-\ell}G_{d,\ell}(\bu_1,\ldots,\bu_d)$, where we recall the definition \eqref{eq:Gdl} of $G_{d,\ell}(\bu_1,\ldots,\bu_d)$. For $\ell=0$, the latter quantity is controlled by the maximal volume
	of a hyperbolic ideal $d$-simplex. For $\ell=1$ and $d\ge3$, it is
	controlled by the maximal volume of a hyperbolic ideal
	$(d-1)$-simplex. Thus the same truncation and dominated-convergence
	argument as in the proof of
	Theorem~\ref{thm:AsymptoticsGeneralD} yields
	\begin{equation}\label{eq:JdellAsymptotic}
		J_{d,\ell}(1-t)
		=
		C_{d,\ell}\,
		t^{-\frac{d(d-1)}2}
		\bigl(1+R_{d,\ell}(t)\bigr),
		\qquad t\in(0,1),
	\end{equation}
	where \(R_{d,\ell}(t)\to0\) as \(t\downarrow0\). Moreover,
	\(R_{d,\ell}\) is bounded on \((0,1)\). The above asymptotic gives
	boundedness of $R_{d,\ell}$ in a neighbourhood of zero, while away from zero this follows
	from the finiteness and monotonicity of \(J_{d,\ell}\). Namely, for every
	fixed \(t_0\in(0,1)\), we have $J_{d,\ell}(1-t)\le J_{d,\ell}(1-t_0)$, $t\in[t_0,1)$.

	To identify the constant in \eqref{eq:JdellAsymptotic}, note that the radial
	integrations contribute
	\[
		2^{-d}
		\left(
		\int_1^\infty y^{-\frac{d+1}{2}}\,\dint y
		\right)^d
		=
		\frac{1}{(d-1)^d},
	\]
	which, together with the factor $(d-1)^{1-\ell}$, gives precisely
	the normalization in the definition of $C_{d,\ell}$.

	We now make the global substitution $s=1-\gamma^{2/(d-1)}x$ with $x\in(0,\gamma^{-2/(d-1)})$.
	Using \eqref{eq:JdellAsymptotic}, all powers of $\gamma$ cancel, and
	the integrand in \eqref{eq:LocalFunctionalAsymptotics} converges
	pointwise to
	\[
		\frac{C_{d,\ell}}{2d}\,
		x^{-1-\frac{(d-1)^2}{2}}
		\exp\!\left(
		-\frac{\kappa_{d-1}}{d-1}
		x^{-\frac{d-1}{2}}
		\right).
	\]
	The boundedness of \(R_{d,\ell}\), together with the estimates for
	\(A\) used in the proof of Theorem~\ref{thm:AsymptoticsGeneralD},
	allows us to apply the same dominated-convergence argument.
	Hence
	\begin{align*}
		 & \lim_{\gamma\downarrow0}
		\frac{1}{\gamma}\,
		\EE F_{\gamma,\lambda}^{(\ell)}(W)
		=
		\frac{C_{d,\ell}}{2d}
		\int_0^\infty
		x^{-1-\frac{(d-1)^2}{2}}
		\exp\!\left(
		-\frac{\kappa_{d-1}}{d-1}
		x^{-\frac{d-1}{2}}
		\right)
		\,\dint x.
	\end{align*}
	With the substitution $u=x^{-(d-1)/2}$, the remaining integral
	equals $2\,(d-2)!\,(d-1)^{d-2}\,\kappa_{d-1}^{-(d-1)}$.
	Consequently,
	\[
		\lim_{\gamma\downarrow0}
		\frac{1}{\gamma}\,
		\EE F_{\gamma,\lambda}^{(\ell)}(W)
		=
		C_{d,\ell}\,
		\frac{(d-2)!(d-1)^{d-2}}{d\,\kappa_{d-1}^{d-1}}.
	\]
	For $\ell=0$, this yields the local-volume asymptotic, while for
	$\ell=1$ it gives the local surface-area asymptotic for $d\ge3$.
	This yields the asserted
	asymptotics for the normalised ratios
	$V_{\gamma,\lambda}(W)$ and
	$S_{\gamma,\lambda}(W)$.

	It remains to consider the local surface area in dimension $d=2$. In
	this case, a direct one-dimensional computation gives
	\begin{equation}\label{eq:J21Asymptotic}
		J_{2,1}(1-t)
		=
		4t^{-1}\log\frac{1}{t}
		+
		O(t^{-1}),
		\qquad t\downarrow0.
	\end{equation}
	Indeed, with $z_i=\frac{u_i}{\sqrt{1+u_i^2}}$ and $U=\sqrt{\frac{1-t}{t}}$,
	the integral $J_{2,1}(1-t)$ becomes
	\[
		\int_{[-U,U]^2}
		\left|
		\frac{u_1}{\sqrt{1+u_1^2}}
		-
		\frac{u_2}{\sqrt{1+u_2^2}}
		\right|
		\left|
		\operatorname{arsinh}(u_1)
		-
		\operatorname{arsinh}(u_2)
		\right|
		\,\dint u_1\dint u_2.
	\]
	The two quadrants in which $u_1$ and $u_2$ have opposite signs
	contribute
	$8U^2\log U+O(U^2)$, whereas the remaining two quadrants contribute
	only $O(U^2)$ as $U\to\infty$. Since $U^2\sim t^{-1}$ as $t\to 0$, this proves
	\eqref{eq:J21Asymptotic}.

	Substituting \eqref{eq:J21Asymptotic} into
	\eqref{eq:LocalFunctionalAsymptotics} and using
	$t=\gamma^2x$, together with
	\[
		A(1-t)=\frac{2}{3}(1-t)^{3/2}{}_2F_1\!\Big(\tfrac32,\tfrac32;\tfrac52;1-t\Big)
		=2\sqrt{\frac{1-t}{t}}-\pi+2\arcsin(\sqrt{t})
		=
		2t^{-1/2}-\pi+O(t^{1/2}),
		\quad t\downarrow0,
	\]
	gives
	\[
		\EE S_{\gamma,\lambda}(W)
		=
		2\gamma\log\frac{1}{\gamma}
		+
		O(\gamma).
	\]

	Finally, the Efron-type identity gives
	\[
		\xi_0(\lambda^{d-1},\lambda)
		=
		\frac{1}{d-1}
		-
		\lambda^{d-1}
		\EE\vol_{\HH^d}\bigl(
		K_{\lambda^{d-1},\lambda}
		\cap C_\lambda([0,1]^{d-1})
		\bigr).
	\]
	Applying the local-volume asymptotic proved above, we obtain
	\[
		\xi_0(\lambda^{d-1},\lambda)
		=
		\frac{1}{d-1}
		-
		C_{d,0}\,
		\frac{(d-2)!(d-1)^{d-3}}{d\,\kappa_{d-1}^{d-1}}\,
		\lambda^{d-1}
		+
		o(\lambda^{d-1}),
	\]
	which completes the proof.
\end{proof}

\begin{proof}[Proof of Proposition~\ref{cor:Vold=2d=3}]
	We need to evaluate the integral
	$$
		J := \int_{(\SS^1)^3}\Delta_2(\bu_1,\bu_2,\bu_3)\vol_{\HH^3}\big(S^\infty(\bu_1,\bu_2,\bu_3)\big)\,\sigma(\dint\bu_1)\,\sigma(\dint\bu_2)\,\sigma(\dint\bu_3).
	$$
	The volume of the $3$-dimensional ideal hyperbolic tetrahedron $S^\infty(\bu_1,\bu_2,\bu_3)$ can be expressed in terms of the Lobachevsky function $\russianL(x):=-\int_0^x \log|2\sin t|\,\dint t$, $x>0$. If we parametrize $\bu_1$, $\bu_2$ and $\bu_3$ as
	$$
		\bu_1=e^{i\theta},\qquad\bu_2=e^{i(\theta+\alpha)},\qquad\bu_3=e^{i(\theta+\alpha+\beta)},\qquad \alpha,\beta>0,\;\gamma:=2\pi-\alpha-\beta>0,
	$$
	then
	$$
		\Delta_2(\bu_1,\bu_2,\bu_3)=\frac{1}{2}(\sin\alpha+\sin\beta+\sin\gamma),\qquad\vol_{\HH^3}\big(S^\infty(\bu_1,\bu_2,\bu_3)\big)
		=
		\russianL\Big(\frac{\alpha}{2}\Big)
		+
		\russianL\Big(\frac{\beta}{2}\Big)
		+
		\russianL\Big(\frac{\gamma}{2}\Big),
	$$
	independently of $\theta$, see \cite[Theorem 10.4.10]{Ratcliffe} or \cite[Proposition 36.5.13]{Voight}.
	We can therefore use rotational invariance and separate the two possible cyclic orders of the three labelled points. This yields
	\[
		J= 2\pi
		\int_D
		(\sin\alpha+\sin\beta+\sin\gamma)
		\Big(
		\russianL\Big(\frac{\alpha}{2}\Big)
		+
		\russianL\Big(\frac{\beta}{2}\Big)
		+
		\russianL\Big(\frac{\gamma}{2}\Big)
		\Big)
		\,\dint\alpha\,\dint\beta,
	\]
	where $D:=\{(\alpha,\beta):\alpha>0,\ \beta>0,\ \alpha+\beta<2\pi\}$.
	By symmetry of the integrand,
	\[
		\begin{aligned}
			J
			 & =
			6\pi
			\int_0^{2\pi}
			\russianL\!\left(\frac{\alpha}{2}\right)
			\left(
			\int_0^{2\pi-\alpha}
			\sin\alpha+\sin\beta+\sin(2\pi-\alpha-\beta)
			\,\dint\beta
			\right)\dint\alpha .
		\end{aligned}
	\]
	The inner integral equals $(2\pi-\alpha)\sin\alpha+2(1-\cos\alpha)$.
	Thus, after the change of variables $x=\alpha/2$,
	\[
		J
		=
		24\pi
		\int_0^\pi
		\russianL(x)\bigl((\pi-x)\sin(2x)+1-\cos(2x)\bigr)
		\,\dint x .
	\]
	Since $\russianL(\pi-x)=-\russianL(x)$, whereas $1-\cos(2x)$ is symmetric
	with respect to $x=\pi/2$, the integral $\int_0^\pi\russianL(x)(1-\cos(2x))\,\dint x$ vanishes. Hence
	\[
		J
		=
		24\pi
		\int_0^\pi
		(\pi-x)\sin(2x)\russianL(x)\,\dint x .
	\]
	Using $\russianL(x)=\frac12\operatorname{Cl}_2(2x)$, with the Clausen
	function of order $2$, and putting $\theta=2x$, we obtain
	\[
		J
		=
		3\pi
		\int_0^{2\pi}
		(2\pi-\theta)\sin\theta\,\operatorname{Cl}_2(\theta)
		\,\dint\theta .
	\]
	Set $f(\theta):=\sin\theta\,\operatorname{Cl}_2(\theta)$.
	Since $f(2\pi-\theta)=f(\theta)$, we have
	\[
		\begin{aligned}
			\int_0^{2\pi}(2\pi-\theta)f(\theta)\,\dint\theta
			 & =
			\int_0^\pi (2\pi-\theta)f(\theta)\,\dint\theta
			+
			\int_\pi^{2\pi} (2\pi-\theta)f(\theta)\,\dint\theta \\
			 & =
			\int_0^\pi (2\pi-\theta)f(\theta)\,\dint\theta
			+
			\int_0^\pi \theta f(\theta)\,\dint\theta            \\
			 & =
			2\pi\int_0^\pi f(\theta)\,\dint\theta .
		\end{aligned}
	\]
	Thus,
	\[
		J
		=
		6\pi^2
		\int_0^\pi
		\sin\theta\,\operatorname{Cl}_2(\theta)\,\dint\theta .
	\]
	The last integral equals $\pi/2$ by \cite[Theorem 6]{Lima}.
	Consequently, $J=6\pi^2\cdot\frac{\pi}{2}=3\pi^3$.
	Finally, since the definition of $C_{3,0}$ contains the
	prefactor $(d-1)^{-(d-1)}=2^{-2}$, $C_{3,0}=\frac{1}{4}\cdot J=\frac{3\pi^3}{4}$.

	It remains to compute the constant $C_{3,1}$. In this case,
	$F^\infty(\bu_1,\bu_2,\bu_3)$ is an ideal triangle in $\HH^2$, whose hyperbolic area is just $\pi$.
	Therefore, using the spherical-simplex integral already evaluated in
	\eqref{eq:Cd}, we obtain
	\begin{align*}
		C_{3,1}
		 & =
		\frac{1}{2^3}
		\int_{(\SS^1)^3}
		\Delta_2(\bu_1,\bu_2,\bu_3)\,
		\cH_{\HH^3}^{2}\bigl(
		F^\infty(\bu_1,\bu_2,\bu_3)
		\bigr)\,
		\sigma(\dint\bu_1)\sigma(\dint\bu_2)\sigma(\dint\bu_3) \\
		 & =
		\frac{\pi}{8}
		\int_{(\SS^1)^3}
		\Delta_2(\bu_1,\bu_2,\bu_3)\,
		\sigma(\dint\bu_1)\sigma(\dint\bu_2)\sigma(\dint\bu_3)
		=
		\frac{\pi}{8}\cdot12\pi^2
		=
		\frac{3\pi^3}{2}.
	\end{align*}
	This completes the proof.
\end{proof}

\subsection*{Acknowledgement}
This work was initiated during the Mini-Workshop \textit{Hyperbolic Meets Stochastic Geometry} at the Mathematisches Forschungsinstitut Oberwolfach (MFO), see \cite{KT26OWR}. AG and CT were supported by the DFG priority program SPP 2265 \textit{Random Geometric Systems}. AG was supported by the DFG under Germany's Excellence Strategy  EXC 2044 -- 390685587, \textit{Mathematics M\"unster: Dynamics - Geometry - Structure}.

During the preparation of this work, the authors used Claude (Anthropic)
and ChatGPT (OpenAI) as tools for technical and editorial assistance,
including the generation of code, in particular to run Monte Carlo checks.
All mathematical statements and proofs were independently verified and finalised by the authors.

\end{document}